\newtheorem{thm}{Theorem}[section]
\newtheorem{lemm}[thm]{Lemma}
\newtheorem{coro}[thm]{Corollary}
\newtheorem{prop}[thm]{Proposition}
\theoremstyle{definition}
\newtheorem{defi}[thm]{Definition}
\newtheorem{remark}[thm]{Remark}
\begin{document}

\title{The algebraisation of higher level Deligne--Lusztig representations II: odd levels}

\author{Zhe Chen \and Alexander Stasinski}

\address{Department of Mathematics, Shantou University, Shantou, 515063, China}
\email{zhechencz@gmail.com}

\address{Department of Mathematical Sciences, Durham University, Durham, DH1 3LE, UK}
\email{alexander.stasinski@durham.ac.uk}

\begin{abstract}
In this paper we study higher level Deligne--Lusztig representations of reductive groups over discrete valuation rings, with finite residue field $\mathbb{F}_q$. In previous work we proved that, at even levels, these geometrically constructed representations are isomorphic to certain algebraically constructed representations (referred to as the algebraisation theorem at even levels).  In this paper we work with an arbitrary level $>1$. 

Our main result is

\begin{itemize}
\item[(1)] the algebraisation theorem at all levels $>1$ (with the sign being explicitly determined for $q\geq7$). As consequences, we obtain

\item[(2)] the regular semisimplicity of orbits of generic higher level Deligne--Lusztig representations, and the dimension formula; in the course of the proof, we give

\item[(3)] an induction formula of higher level Deligne--Lusztig representations, and a new proof of the character formula at regular semisimple elements.
\end{itemize}
\end{abstract}

\maketitle

\tableofcontents

\section{Introduction}

Let $\mathbb{G}$ a connected reductive group scheme, defined over   a complete discrete valuation ring $\mathcal{O}$ with a finite residue field $\mathbb{F}_q$. We will consider two approaches to construct smooth representations of $\mathbb{G}(\mathcal{O})$: An algebraic one based on Clifford theory, and a geometric one based on \'etale cohomology. We shall be concerned with a comparison between them.

\vspace{2mm} Let $\pi$ be a fixed uniformiser of $\mathcal{O}$, and write $\mathcal{O}_r:=\mathcal{O}/\pi^r$, $r\in\mathbb{Z}_{>0}$. By  basic properties of profinite topology, the study of smooth representations of $\mathbb{G}(\mathcal{O})$ is equivalent to the study of representations of the finite groups $\mathbb{G}(\mathcal{O}_r)$, for all $r\in\mathbb{Z}_{>0}$. In the below, the word ``level'' refers to the positive integer $r$.

\vspace{2mm} For $r=1$, the groups $\mathbb{G}(\mathcal{O}_1)=\mathbb{G}(\mathbb{F}_q)$ are the so-called finite groups of Lie type, and a quite satisfactory theory of their representations was established in \cite{DL1976} (and extensive further works, mainly due to Lusztig), building on $\ell$-adic cohomology, known as Deligne--Lusztig theory. In \cite[Section~4]{Lusztig1979SomeRemarks} Lusztig proposed a generalisation of this construction for an arbitrary $r\in\mathbb{Z}_{>0}$, and this was later proved  in \cite{Lusztig2004RepsFinRings} when $\mathrm{char}(\mathcal{O})>0$ and in \cite{Sta2009Unramified} in general.

\vspace{2mm} When $r=1$, Deligne--Lusztig theory is the only known way to construct all irreducible representations for a general $\mathbb{G}$. However, when $r\geq 2$, besides the geometric method, there is also an algebraic approach (based on Clifford theory) to construct certain representations of $\mathbb{G}(\mathcal{O}_r)$, going back to the works of Shintani \cite{Shintani1968sqr_int_irrep_lin_gr} (for $\mathrm{GL}_n$), G\'erardin \cite{Gerardin1975SeriesDiscretes}, and, independently, Hill \cite{Hill_1995_Regular} (for $\mathrm{GL}_n$); in this aspect, G\'erardin's construction works for a general $\mathbb{G}$ whose derived subgroup is simply-connected, and shares a same set of parametres with Lusztig's construction, the set of characters of the maximal tori. While the generalised Deligne--Lusztig theory  provides a unified treatment for all $r\geq 1$, there is a quite natural problem,  originally raised by Lusztig in \cite[Introduction]{Lusztig2004RepsFinRings}: For $r\geq2$, are the geometrically constructed representations isomorphic to G\'erardin's representations, along with the tori characters under suitable generic conditions?

\vspace{2mm} When $r>1$ is even, G\'erardin's representations admit a transparent cohomological realisation, which allows us to compare them with the higher Deligne--Lusztig representations using $\ell$-adic cohomology, via an inner product formula; this leads to a positive answer to the above question for even $r$ in \cite{ChenStasinski_2016_algebraisation} (see the formulation in Theorem~\ref{thm:main_even}).

\vspace{2mm} When $r>1$ is odd, the situation becomes much more subtle. Indeed, in this case there are two non-trivial extensions of representations (involving the so-called Heisenberg lifts) on the algebraic side; in the even level case, one of the extensions disappears, and the other becomes a trivial inflation. Furthermore, for odd $r$ there is a non-trivial sign function, which is eliminated in the even level case. As such, for odd $r$ there is no more a direct cohomological realisation on the algebraic side. To tackle the difficulty we need to employ new ingredients, and our basic strategy consists of three steps:
\begin{itemize}
\item The first is to pass from $\mathbb{G}(\mathcal{O}_r)$ to a simpler subgroup $(TG^{l'})^F< \mathbb{G}(\mathcal{O}_r)$ (see the paragraphs above Definition~\ref{defi: generic conditions}), by two inner product formulae (see Proposition~\ref{prop:middle level step} and Lemma~\ref{lemm: statement (1)}), one of which actually generalises the key step in the even level case;
\item the second step is to construct and characterise a family of representations of the above subgroup by algebraic methods, in a way more conceptual than G\'erardin's ealier approach (see Lemma~\ref{lemm:existence of Heisenberg lift} and Proposition~\ref{prop:existence of ext of rho'});
\item in the final step, the major task is to determine the sign of a virtual representation of $(TG^{l'})^F$, for which the techniques include Brou\'e's bimodule formula, an inductive argument based on dimensions of centralisers of semisimple elements, and a reduction to simply-connected covers (see Lemma~\ref{lemm:full alg of H(X_0)}).
\end{itemize}

\vspace{2mm} Towards the first step in the above strategy is an induction formula of higher level Deligne--Lusztig representations $R_{T,U}^{\theta}$, namely, for an arbitrary $r>1$, we prove that (Theorem~\ref{thm:induction theorem}): 
$$R_{T,U}^{\theta}\cong\mathrm{Ind}_{(TG^{l'})^F}^{G^F} H_c^*(X_0)_{\theta},$$
where $H_c^*(X_0)_{\theta}$ is, up to a sign, an irreducible representation of $(TG^{l'})^F$.

\vspace{2mm} To deduce the above induction formula we need to compute certain inner products, which in turn imply the following result: Assuming the existence of a non-degenerate bilinear form on the Lie algebra (see \hyperlink{condition (A)}{{(A)}}), one can define the notion of orbit $\Omega(\sigma)$ in Lie algebra (Definition~\ref{defi: orbit rep}) for an irreducible representation $\sigma$ of $\mathbb{G}(\mathcal{O}_r)$. In Theorem~\ref{thm:orbit type} we prove that 
\begin{equation*}
\Omega(\pm R_{T,U}^{\theta})\ \textrm{is regular and semisimple},
\end{equation*} 
for every generic $\theta$. This gives an affirmative answer to the problem posed in \cite[Introduction]{Stasinski_Stevens_2016_regularRep}. Then for $\mathbb{G}=\mathrm{GL}_n$, in Proposition~\ref{coro:dim formula} we solve the regular semisimple case of a claim made in \cite[Section~2. Remarks]{Hill_1993_Jordan}, which asserts that the notion of orbits can be described by an analogue of Deligne--Lusztig characters. Meanwhile, as an immediate application of the above induction formula, we find a new proof of the character formula in \cite[Theorem~4.1]{Chen_2018_GreenFunction}:  
$$\mathrm{Tr}(s,R_{T,U}^{\theta})=\sum_{w\in W(T)^F}(^w\theta)(s),$$
where $s\in T_1^F$ is regular semisimple and $\theta$ is strongly generic (Corollary~\ref{coro:character formula at rss}).

\vspace{2mm} In Section~\ref{sec:alg construction}, we prove our main result (see Theorem~\ref{thm:full alg}), the algebraisation theorem for every $r>1$:
$$R_{T,U}^{\theta}\cong e_{\theta}(1)^r\cdot \mathrm{Ind}_{(TG^{l'})^F}^{G^F} \hat{\rho}_{\theta},$$
where $\theta$ is strongly generic, 
$\hat{\rho}_{\theta}$ is an irreducible  representation of $(TG^{l'})^F$ constructed by algebraic methods, and $e_{\theta}(1)=\pm 1$; furthermore, the precise value of $e_{\theta}(1)$ is determined provided $q\geq7$. This result gives a positive solution to the problem raised in \cite[Introduction]{Lusztig2004RepsFinRings}. It immediately implies the dimension formula
$$\dim R_{T,U}^{\theta}=\pm {|G_1^F/T_1^F|_{p'}}\cdot q^{(r-1)\cdot\#\Phi^+},$$
where $\theta$ is strongly generic and $\Phi^+$ denotes the set of positive roots (Corollary~\ref{coro:dim formula for general case}). Prior to this result, the dimension of $R_{T,U}^{\theta}$ was not known for odd $r>1$. We shall remark that, the proof of the  dimension formula for the classical $r=1$ case uses special properties of the Steinberg character, which are not available for $r>1$. We shall also note that, while the algebraisation theorem is for $r>1$, the dimension formula and the sign that appears  are compatible with the classical $r=1$ case. In the special case that $\mathbb{G}=\mathrm{GL}_n$, the sign $e_{\theta}(1)$ is determined for all $q$ (see Remark~\ref{remark:q<7 for GL_n}).

\vspace{2mm} One of the main points of the above algebraisation is to make the higher Deligne--Lusztig representations more ``explicit''; it is only through this isomorphism we can determine the basic invariants like orbits and dimensions. On the other hand, the higher Deligne--Lusztig construction has an important feature that it implies a natural Frobenius action  on the representation space $R_{T,U}^{\theta}$, through cohomology groups; it is unclear how to construct this action from the algebraic side.

\vspace{2mm} A brief summary of each section: In Section~\ref{sec:prelim} we explain several basic notations and concepts; in particular we show the equivalences between the generic conditions for $\mathrm{GL}_n$ (see Proposition~\ref{prop: regular in GL_n}). In Section~\ref{sec:main strategy} and Section~\ref{sec:alg construction} we present our main results mentioned above, which include the induction formula, the orbit type determination, the full algebraisation etc. The final Section~\ref{sec: tech1} is devoted to the proof of a central ingredient, Lemma~\ref{lemm: statement (1)}, which guarantees the irreducibility of $H_c^*(X_0)_{\theta}$ for regular $\theta$, up to sign.

\vspace{2mm} \noindent {\bf Acknowledgement.} During the preparation of this work, ZC is partially supported  by NSFC No.~12001351 and Natural Science Foundation of Guangdong No.~2023A1515010561.

\section{Preliminaries}\label{sec:prelim}

In this section we briefly recall the notation and the constructions needed, mainly following our first paper \cite{ChenStasinski_2016_algebraisation}.

\vspace{2mm} Let $\mathcal{O}^{\mathrm{ur}}$ be the ring of integers of the maximal unramified extension of $\mathrm{Frac}(\mathcal{O})$, and let $\mathcal{O}^{\mathrm{ur}}_r:=\mathcal{O}^{\mathrm{ur}}/\pi^r$. Given an affine smooth group scheme $\mathbf{G}$ over $\mathcal{O}^{\mathrm{ur}}_r$, by the Greenberg functor (see \cite{Greenberg19611},\cite{Greenberg19632},\cite{Sta2009Unramified}) there is an associated affine smooth algebraic group $G_r$ over $\overline{\mathbb{F}}_q$ such that 
$$G_r(\overline{\mathbb{F}}_q)\cong \mathbf{G}(\mathcal{O}^{\mathrm{ur}}_r)$$ 
as abstract groups. When there is no confusion, we often drop the subscript $r$ and use the notation $G=G_r$. 

\vspace{2mm} From now on, let $\mathbb{G}$ be a connected reductive group scheme over $\mathcal{O}_r$ in the sense of \cite[XIX~2.7]{SGA3}, and let $\mathbf{G}$ be its base change to $\mathcal{O}^{\mathrm{ur}}_r$. Then there is an associated geometric Frobenius endomorphism $F$ of $G$ such that 
$$G^F\cong \mathbb{G}(\mathcal{O}_r)$$ 
as abstract groups; this allows us to use the geometry of $G$ to study the representations of $\mathbb{G}(\mathcal{O}_r)$. If $i\leq r$ is a positive integer, then the reduction map modulo $\pi^{i}$ induces a surjective algebraic  group morphism 
$$\rho_{r,i}\colon G\longrightarrow G_i,$$
and we denote the kernel of $\rho_{r,i}$ by $G_r^i=G^i$. Similar notation applies to the closed subgroups of $G$.

\vspace{2mm} Let $\mathbf{B}$ be a Borel subgroup of $\mathbf{G}$ and let $\mathbf{B}=\mathbf{T}\ltimes\mathbf{U}$ be a Levi decomposition (with $\mathbf{T}$ being a maximal torus and $\mathbf{U}$ being the unipotent radical of $\mathbf{B}$). Then we have the associated algebraic groups $B,T,U$ (as well as the opposite groups $B^-,U^-$). Throughout this paper we assume that $T$ is $F$-stable (i.e.,\ $FT=T$). Let $L\colon g\mapsto g^{-1}F(g)$ be the associated Lang isogeny. Note that $G^F$ acts on the variety $L^{-1}(FU)$ by left translation, and $T^F$ acts on $L^{-1}(FU)$ by right translation. The Deligne--Lusztig representations at level $r$ are the virtual representations
$$R_{T,U}^{\theta}:=\sum_i(-1)^iH_c^i(L^{-1}(FU),\overline{\mathbb{Q}}_{\ell})_{\theta}$$
of $G^F$, where $H_c^i(-,\overline{\mathbb{Q}}_{\ell})$ denotes the compactly supported $\ell$-adic cohomology (here $\ell$ is an arbitrary fixed prime not equal to $p:=\mathrm{char}(\mathbb{F}_q)$), and the subscript ${(-)}_{\theta}$ means that we are taking the $\theta$-isotypical part for the irreducible characters $\theta\in \mathrm{Irr}({T^F})$.

\vspace{2mm} For a given $r>1$, consider the two integers: $l=\lceil\frac{r}{2}\rceil$ and $l'=\lfloor\frac{r}{2}\rfloor$ (in particular, $l=l'$ if $r$ is even). Relevant to $l$ and $l'$, there are some $F$-stable closed subgroups of $G$ that are important to us:
$$G^l, G^{l'}, U^{\pm}:=U^l(U^-)^l, TG^l=TU^{\pm}, TG^{l'}.$$
Note that $G^{l}$  is commutative  and its elements commute with the elements in $G^{l'}$ (since $[G^i,G^j]\subseteq G^{i+j}$).

\vspace{2mm} Let $\Phi$ denote the set of roots determined by $\mathbf{T}$, and let $\Phi^+$ (resp.\ $\Phi^-$) denote the subset of positive (resp.\ negative) roots with respect to $\mathbf{B}$. Then for any $\alpha\in \Phi^+$ (resp.\ $\in \Phi^-$) we have a closed subgroup $U_{\alpha}\subseteq U$ (resp.\ $\subseteq U^-$). Let $T^{\alpha}\subseteq T$ be the image of the coroot $\check{\alpha}$, and let $\mathcal{T}^{\alpha}:=(T^{\alpha})^{r-1}$.

\begin{defi}\label{defi: generic conditions}
Let $r>1$. Here we list various conditions on $\theta\in\mathrm{Irr}({T^F})$:
\begin{itemize}
\item[(1)] $\theta$ is called \emph{in general position}, if no non-trivial element in $(N_G(T)/T)^F$ fixed $\theta$.
\item[(2)] $\theta$ is called \emph{regular}, if it is non-trivial on $N^{F^a}_F((\mathcal{T}^{\alpha})^{F^a})$ for every $\alpha\in\Phi$, where $a\in\mathbb{Z}_{>0}$ is such that $F^a\mathcal{T}^{\alpha}=\mathcal{T}^{\alpha}$ for $\forall \alpha\in\Phi$, and $N^{F^a}_F(t):=t\cdot F(t)\cdots F^{a-1}(t)$ is the norm map on $T^{F^a}$. (This condition is independent of the choice of $a$ \cite[2.8]{Sta2009Unramified}.) 
\item[(3)] $\theta$ is called \emph{generic}, if it is regular and in general position.
\item[(4)] $\theta$ is called \emph{strongly generic}, if it is generic and satisfies the stabiliser condition $\mathrm{Stab}_{G^F}(\widetilde{\theta}|_{(G^l)^F})=(TG^{l'})^F$, where $\widetilde{\theta}$ is the inflation of $\theta$ to $(TG^l)^F$ via $TG^l/U^{\pm}\cong T$.
\end{itemize} 
\end{defi}

We remark that, in the above definition being generic is very close to being strongly generic, and they are natural analogues of (strongly) regular semisimple elements in the Lie algebra. In the next proposition, for $\mathrm{GL}_n$, we prove that these two notions are actually equivalent, and that most $\theta$'s are strongly generic when $q$ is large; for a general $\mathbb{G}$ with $r=2,3$, this also holds true assuming the existence of a certain non-degenerate bilinear form on the Lie algebra  (see Remark~\ref{remark:strongly generic at r=2,3}).

\begin{prop}\label{prop: regular in GL_n}
Let $\mathbb{G}=\mathrm{GL}_n$. Then for $\theta\in\mathrm{Irr}({T^F})$,
$$\theta\ \textrm{is regular}
\Longleftrightarrow \theta\ \textrm{is generic}
\Longleftrightarrow \theta\ \textrm{is strongly generic},$$
and we have
\begin{equation*}
\lim_{m\rightarrow+\infty}\frac{\#\{\textrm{strongly generic}\ \theta\in\mathrm{Irr}({T^{F^m}})\}}{\#\{\theta\in\mathrm{Irr}({T^{F^m}})\}}=1.
\end{equation*}
\end{prop}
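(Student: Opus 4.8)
The plan is to reduce every condition in Definition~\ref{defi: generic conditions} to a single statement about a ``symbol'' of $\theta$ lying in the Cartan subalgebra, deduce the equivalences formally, and then settle the density assertion by a counting argument. Recall the structure theory for $\mathrm{GL}_n$: the $G^F$-conjugacy classes of $F$-stable maximal tori are parametrised by partitions $\lambda=(\lambda_1,\dots,\lambda_s)$ of $n$, with $T^F\cong\prod_i R_{\lambda_i}^{\times}$ and $(N_G(T)/T)^F\cong\prod_d\bigl((\mathbb{Z}/d)^{m_d}\rtimes S_{m_d}\bigr)$, where $R_d$ is the reduction mod $\pi^r$ of the unramified extension of $\mathcal{O}$ of residue degree $d$, $m_d=\#\{i:\lambda_i=d\}$, the $\mathbb{Z}/d$-factors act by the residue Frobenius on $R_d^{\times}$ and $S_{m_d}$ permutes equal blocks. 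Since $2l\ge r$ the group $(G^l)^F$ is abelian, and for $\mathrm{GL}_n$ the non-degenerate trace form (together with a fixed additive character of $\mathcal{O}_r$ not factoring through $\mathcal{O}_{r-1}$) identifies $\mathrm{Irr}\bigl((G^l)^F\bigr)$, conjugation-equivariantly, with $\mathfrak{gl}_n(\mathcal{O}_{l'})$; since $\widetilde\theta|_{(G^l)^F}$ is trivial on $(U^{\pm})^F$ and equals $\theta$ on $(T^l)^F$, it corresponds to an element $\beta\in\mathfrak{t}(\mathcal{O}_{l'})$. Write $\bar\beta:=\beta\bmod\pi$; it encodes precisely the deepest layer of $\theta$ along the torus.

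The technical core is the equivalence, for each $\alpha\in\Phi$,
\[
\theta\ \text{is non-trivial on}\ N^{F^a}_F\bigl((\mathcal{T}^\alpha)^{F^a}\bigr)\ \Longleftrightarrow\ \alpha(\bar\beta)\ne 0 .
\]
I would prove this by unwinding the coroot $\check\alpha$ and the norm map: $\mathcal{T}^\alpha=(T^\alpha)^{r-1}$ is a one-dimensional connected group inside the deepest congruence layer of $T$, and applying $N^{F^a}_F$ to its $F^a$-fixed points spans exactly the line through the deepest-layer image of $\check\alpha$ — the point being that linear independence of distinct Frobenius powers prevents the norm from degenerating this line — so non-triviality of $\theta$ there is non-vanishing of the pairing of $\bar\beta$ against it, i.e.\ $\alpha(\bar\beta)\ne 0$. (As $\bar\beta$ is $F$-fixed, $\alpha(\bar\beta)$ vanishes simultaneously over $F$-orbits of roots, so the quantifier ``for all $\alpha$'' is well-posed.) Consequently $\theta$ is regular if and only if $\alpha(\bar\beta)\ne 0$ for every $\alpha\in\Phi$, i.e.\ if and only if $\bar\beta$ is regular semisimple.

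Granting this, the equivalences are formal. The implications strongly generic $\Rightarrow$ generic $\Rightarrow$ regular are definitional. Conversely, suppose $\theta$ is regular, so $\bar\beta$ has pairwise distinct eigenvalues. Any $w\in(N_G(T)/T)^F$ fixing $\theta$ fixes $\bar\beta$, but $(N_G(T)/T)^F$ embeds in the coordinate permutations of $\mathfrak{t}$, which act freely on regular semisimple elements; hence $w=1$ and $\theta$ is in general position, so generic. For the stabiliser condition, $[G^{l'},G^l]\subseteq G^{l+l'}=G^r=1$ shows $(G^{l'})^F$ acts trivially on $(G^l)^F$, and $T^F$ fixes $\widetilde\theta|_{(G^l)^F}$ because it is trivial on $(U^{\pm})^F$, so $\mathrm{Stab}_{G^F}(\widetilde\theta|_{(G^l)^F})\supseteq(TG^{l'})^F$ always; since the conjugation action factors through $\mathrm{GL}_n(\mathcal{O}_{l'})$, this stabiliser is $(G^{l'})^F$ times a lift of $C_{\mathrm{GL}_n(\mathcal{O}_{l'})}(\beta)$, and regularity forces $\alpha(\beta)\in\mathcal{O}_{l'}^{\times}$ for all $\alpha$, so only diagonal-type matrices centralise $\beta$ and $C_{\mathrm{GL}_n(\mathcal{O}_{l'})}(\beta)$ is the image of $T^F$. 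Thus the stabiliser is exactly $(TG^{l'})^F$, i.e.\ regular $\Rightarrow$ strongly generic, closing the cycle.

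For the density statement, $\#\mathrm{Irr}(T^{F^m})=|T^{F^m}|$, and by the equivalence a character of $T^{F^m}$ fails to be strongly generic precisely when it is trivial on one of the finitely many subgroups $H_{\alpha,m}:=N^{F^{b}}_{F^m}\bigl((\mathcal{T}^\alpha)^{F^{b}}\bigr)$ ($\alpha\in\Phi$, $b$ a suitable multiple of $m$), so the number of such characters is at most $\sum_{\alpha\in\Phi}|T^{F^m}|/|H_{\alpha,m}|$. Since $\mathcal{T}^\alpha$ is a positive-dimensional connected algebraic group, the Lang--Steinberg theorem forces $|H_{\alpha,m}|\ge q^{m}$, whence the fraction of non-strongly-generic characters is at most $|\Phi|\,q^{-m}\to 0$, as required. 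The step I expect to demand the most care is the core equivalence of the second paragraph — in particular, making the behaviour of the norm maps $N^{F^a}_F$ on twisted tori precise — whereas the remaining arguments are either the standard structure theory of $\mathrm{GL}_n$ or purely formal.
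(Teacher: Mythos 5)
Your treatment of the equivalences follows essentially the same route as the paper: introduce the symbol $\beta\in\mathrm{Lie}(\mathbf{T})(\mathcal{O}_{l'})$ of $\widetilde\theta|_{(G^l)^F}$ via the trace form, show that regularity of $\theta$ amounts to $\alpha(\bar\beta)\neq 0$ for all $\alpha$ (equivalently $C_{G_{l'}}(\beta)=T_{l'}$), deduce the stabiliser condition from $C_{G_{l'}^F}(\beta)=T_{l'}^F$, and get general position from regularity of $\bar\beta$; the formal closing of the cycle is fine. One caution on your ``technical core'': for a twisted torus the norm $N^{F^a}_F$ does \emph{not} land in the line through the deepest-layer image of $\check\alpha$ --- it is a sum of Frobenius translates lying in the various $\mathcal{T}^{F^i\alpha}$ --- so ``spans exactly the line through $\check\alpha$'' is not the right picture. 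What makes your stated equivalence true is pairing against the $F$-stable $\beta$: since $\mu(\beta,F^i t)=F^i\mu(\beta,t)$, the pairing of $\beta$ with $N^{F^a}_F(t)$ becomes the field trace of $\alpha(\bar\beta)\tau$ over $\mathbb{F}_{q^a}/\mathbb{F}_q$; one direction then uses $F$-stability of $\beta$, the other the nonvanishing of that trace. This is exactly how the paper argues, and your sketch as written would not survive the twisted case without this correction.

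For the density statement you genuinely diverge from the paper: the paper counts regular elements $\bar\beta$ in $\mathrm{Lie}(\mathbf{T})(\mathbb{F}_{q^m})$ (complement of a hyperplane arrangement, giving the bound $\binom{n}{2}q^{m(n-1)}$) and then checks that the fibres of $\theta\mapsto\beta$ have constant size, whereas you count characters trivial on the norm subgroups $H_{\alpha,m}$ directly. Your route is shorter, but the key inequality $|H_{\alpha,m}|\geq q^m$ is not a consequence of Lang--Steinberg as claimed: Lang--Steinberg controls the source $(\mathcal{T}^\alpha)^{F^b}$ (of size about $q^b$), not the image of the norm map, and since $\mathcal{T}^\alpha$ need not be $F^m$-stable one cannot invoke surjectivity of norm maps for $F^m$-stable tori either. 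The inequality is nevertheless true and fixable: in the additive deepest-layer picture the kernel of $N^{F^b}_{F^m}$ on $(\mathcal{T}^\alpha)^{F^b}\cong\mathbb{F}_{q^b}$ is an $\mathbb{F}_{q^m}$-subspace, and it is proper because the coefficient of a fixed coordinate vector in $N^{F^b}_{F^m}(t)$ is a nonzero additive polynomial in $t$ of degree $<q^b$ (linear independence of distinct Frobenius powers), so the image has at least $q^m$ elements. With that supplied, your bound $\#\{\textrm{non-regular }\theta\}\leq\sum_{\alpha}|T^{F^m}|/|H_{\alpha,m}|\leq|\Phi|\,q^{-m}|T^{F^m}|$ does yield the limit, and is arguably cleaner than the paper's counting.
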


\begin{proof}
First note that any element in $(G^l)^F$ can be written as $1+\pi^{l}x$ for a unique $x\in M_n(\mathcal{O}_{l'})$. Fix a non-trivial homomorphism of abelian groups $\psi\colon\mathcal{O}_l\rightarrow\overline{\mathbb{Q}}_{\ell}^{\times}$; then for all $x\in M_n(\mathcal{O}_{l'})$ we have
$$\widetilde{\theta}\mid_{(G^l)^F}(1+\pi^lx)=\psi(\mathrm{Tr}(\beta\cdot x))$$
for a unique $\beta\in\mathrm{Lie}(\mathbf{T})(\mathcal{O}_{l'})=\mathrm{Lie}(\mathbf{T})(\mathcal{O}^{\mathrm{ur}}_{l'})\cap M_n(\mathcal{O}_{l'})$. Now since $\psi$ is non-trivial, the regularity condition on $\theta$ can be reformulated as: There is a positive integer $a$ such that $\mathbf{T}^{\alpha}$ is defined over $\mathcal{O}^a_{l'}$ (here $\mathcal{O}^a$ denotes the unramified extension of $\mathcal{O}$ of degree $a$) for every $\alpha\in\Phi$, with the property
\begin{equation*}
\forall\alpha\in\Phi,\ \exists t\in\pi^{l'-1}\mathrm{Lie}(\mathbf{T}^{\alpha})(\mathcal{O}^{a}_{l'})\ \textrm{s.t.}\ \mathrm{Tr}(\beta\cdot N^{F^a}_F(t))\neq0,
\end{equation*}
where $N^{F^a}_F(t)$ is the norm of $t$ viewed as an element of $(T_{l'}^{l'-1})^{F^a}$. We claim that the regularity condition implies that
\begin{equation}\label{temp formula:regularity condition}
\forall\alpha\in\Phi,\ \exists t\in\pi^{l'-1}\mathrm{Lie}(\mathbf{T}^{\alpha})(\mathcal{O}^{\mathrm{ur}}_{l'})\ \textrm{s.t.}\ \mathrm{Tr}(\beta\cdot t)\neq0.
\end{equation}
Indeed, otherwise there is a root $\alpha$ such that $ \mathrm{Tr}(\beta\cdot t)=0$ for any $ t\in\pi^{l'-1}\mathrm{Lie}(\mathbf{T}^{\alpha})(\mathcal{O}^{\mathrm{ur}}_{l'})$; then, since $\beta$ is $F$-stable, we have 
$$\forall d\in\mathbb{Z}_{\geq0},\ \mathrm{Tr}(\beta\cdot F^d(t))=\mathrm{Tr}(F^d(\beta\cdot t))=F^d(\mathrm{Tr}(\beta\cdot t))=0,\ $$
which gives that $\mathrm{Tr}(\beta\cdot N^{F^a}_F(t))=0$ by the bilinearity of $\mathrm{Tr}(-,-)$ (note that, when viewing the $F^d(t)$'s as elements in $\pi^{l'-1}M_n(\mathcal{O}^{\mathrm{ur}}_{l'})$, the multiplication used in the definition of $N_F^{F^a}(t)$ becomes the matrix addition). So the claim holds. Now by conjugating $\mathbf{T}$ to be the diagonal torus, we can assume that $\beta$ is conjugated to  $\beta':=\mathrm{diag}(\beta_1,\cdots \beta_n)$ where $\beta_i\in\mathcal{O}^{\mathrm{ur}}_{l'}$, then \eqref{temp formula:regularity condition} can be reformulated as
\begin{equation}\label{temp formula: diagonal beta}
\pi\nmid\beta_i-\beta_j\ \forall i\neq j.
\end{equation}
As we are in $\mathrm{GL}_n$, by direct computations we see that \eqref{temp formula: diagonal beta} is equivalent to
\begin{equation}\label{temp formula: stabiliser condition 3}
C_{G_{l'}}(\beta)=T_{l'}.
\end{equation}
Meanwhile, taking quotients modulo $G^{l'}$, the stabiliser condition in Definition~\ref{defi: generic conditions}~(4) can be restated as
\begin{equation}\label{temp formula: stabiliser condition 1}
C_{G_{l'}^F}(\beta)=T_{l'}^F,
\end{equation}
which clearly follows from  \eqref{temp formula: stabiliser condition 3}. So the regularity condition implies the stabiliser condition. 

\vspace{2mm} To prove the equivalence between the regularity condition and the strongly generic condition, it now remains to show that the stabiliser condition implies the general position condition. Suppose that $w\in N_{G}(T)^F$ stabilises $\theta$, then it also stabilises $\widetilde{\theta}|_{(G^l)^F}$, so in terms of $\beta$ we have $\rho_{r,l'}(w)\in C_{G_{l'}^F}(\beta)$. By \eqref{temp formula: stabiliser condition 1} this implies that $\rho_{r,l'}(w)\in T_{l'}^F$. As the reduction map induces, for each $r'\in\mathbb{Z}_{[1,r]}$, an isomorphism between $N_{G_{r'}}(T_{r'})^F/T_{r'}^F$ and $N_{G_{1}}(T_{1})^F/T_{1}^F$ (see e.g.\ \cite[XXII~3.4]{SGA3}), we conclude that $w\in T^F$, i.e.\  $\theta$ is in general position.  So the regularity condition implies (and hence is equivalent to) the strongly generic condition. This proves the first assertion.

\vspace{2mm} For the second assertion, note that \eqref{temp formula: diagonal beta} can be written as 
\begin{equation}\label{temp formula: stabiliser condition 2}
\mathrm{Lie}(\alpha)(\beta)\in{(\mathcal{O}^{\mathrm{ur}}_{l'})}^{\times},\ \forall\alpha\in\Phi,
\end{equation}
where $\mathrm{Lie}(\alpha)\colon \mathrm{Lie}(\mathbf{T})(\mathcal{O}^{\mathrm{ur}}_{l'})\rightarrow \mathcal{O}^{\mathrm{ur}}_{l'}$ denotes the Lie algebra version of the root $\alpha$.  We claim that \eqref{temp formula: stabiliser condition 2} implies (and hence is equivalent to) the regularity condition. To see this, let $a$  be a positive integer such that all $\alpha,\check{\alpha}$ (and hence all $\mathbf{T}^{\alpha}$) are defined over $\mathbb{F}_{q^a}$. Then $\mathrm{Lie}(\alpha)(\beta)\in{(\mathcal{O}^{a}_{l'})}^{\times}$ for all $\alpha$. Note that $\pi^{l'-1}\mathrm{Lie}(\mathbf{T}^{\alpha})(\mathcal{O}^{\mathrm{a}}_{l'})$ can be viewed as the $a$-dimensional space $\mathbb{F}_{q^a}$ over $\mathbb{F}_{q}$, and every $t\in\pi^{l'-1}\mathrm{Lie}(\mathbf{T}^{\alpha})(\mathcal{O}^{\mathrm{a}}_{l'})$ corresponds to a unique $\tau\in\mathbb{F}_{q^a}$ (by conjugating $\mathbf{T}$ to be the diagonal torus) with the property that  $\mathrm{Tr}(\beta\cdot F^d(t))$, viewed as  an element in $\mathbb{F}_{q^a}\cong  \pi^{l'-1}\mathcal{O}^{a}_{l'}$, is equal to $\mathrm{Lie}(F^d(\alpha))(\beta)\cdot F^d(\tau)\pmod\pi$, for $d=0,1,...,a-1$. As $\beta\in\mathrm{Lie}(\mathbf{T})(\mathcal{O}_{l'})$, we have
$\mathrm{Lie}(F^d(\alpha))(\beta)\cdot F^d(\tau)=F^d(\mathrm{Lie}(\alpha)(\beta)\cdot \tau)$, so
$$\sum_{d=0}^{a-1}\mathrm{Lie}(F^d(\alpha))(\beta)\cdot F^d(\tau)
=\sum_{d=0}^{a-1}F^d(\mathrm{Lie}(\alpha)(\beta)\cdot \tau).$$
In the right hand side, note that, after modulo $\pi$, the function $\sum_{d}F^d(-)$ is the trace of the field extension $\mathbb{F}_{q^a}/\mathbb{F}_q$; it is well-known that this function is not identically zero since $\mathbb{F}_{q^a}/\mathbb{F}_q$ is separable. Thus, by \eqref{temp formula: stabiliser condition 2}, there is a $\tau\in\mathbb{F}_{q^a}$  such that 
$$\sum_{d=0}^{a-1}F^d(\mathrm{Lie}(\alpha)(\beta)\cdot \tau) \not\equiv 0 \pmod \pi,$$
that is, $\mathrm{Tr}(\beta\cdot N^{F^a}_F(t))\neq0$ for the $t$ corresponding to $\tau$. So the claim holds. Meanwhile, note that \eqref{temp formula: stabiliser condition 2} is equivalent to
$$\mathrm{Lie}(\alpha)(\beta)\not\equiv 0 \pmod\pi,\ \forall\alpha\in\Phi,$$
that is, $\beta\pmod\pi\in\mathrm{Lie}(\mathbf{T})(\overline{\mathbb{F}}_q)\subseteq M_n(\overline{\mathbb{F}}_q)$ is regular. Thus the regularity of $\theta$ is equivalent to the regularity of $\beta\pmod\pi$. In the below we give a lower bound of the number of the regular elements in $\mathrm{Lie}(\mathbf{T})(\overline{\mathbb{F}}_q)$. 

\vspace{2mm} As $\mathbb{G}=\mathrm{GL}_n$, $\beta\pmod\pi$ is regular if and only if $\beta\pmod\pi$ has mutually different eigenvalues.  Considering eigenvalues of elements in $M_n(\overline{\mathbb{F}}_q)$, we see that the irregular semisimple elements of the Lie algebra $\mathrm{Lie}(\mathbf{T})_{\overline{\mathbb{F}}_q}$  form a hyperplane arrangement $Y$ (in the affine space $\mathrm{Lie}(\mathbf{T})_{\overline{\mathbb{F}}_q}$) with $\binom{n}{2}$ irreducible components (each component determined by a root equation $\mathrm{Lie}(\alpha)(-)\equiv0\pmod\pi$). Since $\mathrm{Lie}(\mathbf{T})_{\overline{\mathbb{F}}_q}$ can be conjugated to the diagonal Cartan subalgebra by a single element in $\mathrm{GL}_n(\overline{\mathbb{F}}_q)$, $\mathrm{Lie}(\mathbf{T})_{\overline{\mathbb{F}}_q}$ is cut out by $n(n-1)$ independent linear homogeneous equations from the affine space $M_n\cong\mathbb{A}^{n^2}$. So each irreducible component $Y_i$ of $Y$ is cut out by $n(n-1)+1$ independent linear homogeneous equations from $M_n$; in particular, basic linear algebra tells that, for each $i$, the number of the matrices in $M_n(\mathbb{F}_{q^m})\cap Y_i$  is $\leq q^{m\cdot (n^2-n(n-1)-1)}=q^{m\cdot(n-1)}$, so $\#Y(\mathbb{F}_{q^m})\leq\binom{n}{2}\cdot q^{m\cdot(n-1)}$.
Meanwhile, since $\mathrm{Lie}(\mathbf{T})_{{\mathbb{F}}_q}$ is an affine space, we have $\#\mathrm{Lie}(\mathbf{T})(\mathbb{F}_{q^m})=q^{m\cdot n}$ (see e.g.\ \cite[Proposition~10.11]{DM1991}).

\vspace{2mm} Now note that, for any $\beta\in\mathrm{Lie}(\mathbf{T})(\mathcal{O}_{l'})$, the representation $\mathrm{Ind}_{(T^{l})^F}^{T^F}\psi(\mathrm{Tr}(\beta\cdot(-))$ is multiplicity-free of dimension $\#T_l^F$, a number independent of $\beta$, with each irreducible constituent being of dimension $1$; it follows that the number of the $\theta$'s producing the same $\beta$ is independent of $\beta$, which implies that\begin{equation*}
\begin{split}
\lim_{m\rightarrow+\infty}\frac{\#\{\textrm{strongly generic}\ \theta\in\mathrm{Irr}({T^{F^m}})\}}{\#\{\theta\in\mathrm{Irr}({T^{F^m}})\}}
=&
\lim_{m\rightarrow+\infty}\frac{\#\{\textrm{regular elements in}\ \mathrm{Lie}(\mathbf{T})(\mathbb{F}_{q^m})\}}{\#\mathrm{Lie}(\mathbf{T})(\mathbb{F}_{q^m})}\\
\geq &
\lim_{m\rightarrow+\infty}\frac{q^{m\cdot n}-\binom{n}{2}\cdot q^{m\cdot(n-1)}}{q^{m\cdot n}}= 1,
\end{split}
\end{equation*}
as desired.
\end{proof}

From \cite{Lusztig2004RepsFinRings}, \cite{Sta2009Unramified} we know that, if $\theta$ is generic, then $R_{T,U}^{\theta}$ is (up to a sign) irreducible. 

\vspace{2mm} In the remaining part of this section let $r>1$. We shall always take the viewpoint that $G^{r-1}$ is the additive group of the Lie algebra $\mathfrak{g}$ of $G_1$ (see \cite[II~4.3]{Demazure_Gabriel_1980intro_AG_AG}); note that $G_1=G/G^{1}$  acts on $G^{r-1}$ through 
 the conjugation action of $G$. For any irreducible $\overline{\mathbb{Q}}_{\ell}$-representation $\sigma$ of $G^F$, by Clifford's theorem  we have  (see e.g.\ \cite[6.2]{Isaacs_CharThy_Book})
$$\sigma|_{(G^{r-1})^F}\cong e\cdot\left(\sum_{\chi\in G_1^F\cdot \sigma} \chi \right),$$
where $e\in \mathbb{Z}_{>0}$ and $G_1^F\cdot \sigma$ denotes the $G_1^F$-orbit of irreducible representations of $\mathfrak{g}^F$ (viewed as an additive group), which gives a map
\begin{equation*}
\mathrm{Irr}(G^F)\longrightarrow G_1^F\backslash\mathrm{Irr}(\mathfrak{g}^F).
\end{equation*}

\vspace{2mm} Now consider the assumption:
\begin{itemize}\hypertarget{condition (A)}{}
\item[{\bf (A)}] There exists a non-degenerate symmetric bilinear form
$$\mu(-,-)\colon G^{r-1}\times G^{r-1}\longrightarrow \overline{\mathbb{F}}_q$$ 
defined over $\mathbb{F}_q$, such that the following property \hyperlink{condition (A1)}{{(A1)}} holds:

\item[]\hypertarget{condition (A1)}{}
\subitem (A1)  $\mu$ is invariant under the conjugation of $G_1$ and the adjoint action of $\mathfrak{g}$ (the latter means that $\mu([x,y],z)=\mu(x,[y,z])$ where $[,]$ is the Lie bracket).
\end{itemize}

\vspace{2mm} Note that \hyperlink{condition (A1)}{{(A1)}} implies that: (See \cite[2.5.1]{Let2005book}) 
\begin{itemize}
\item[]\hypertarget{condition (A2)}{} 
\subitem (A2) $\mu(U_{\alpha}^{r-1},T^{r-1}\prod_{\beta\in\Phi\backslash\{-\alpha\}}U_{\beta}^{r-1})=0$ for any root $\alpha$. 
\end{itemize}

\vspace{2mm} In the remaining part of this paper, whenever we talk about the notion of orbits of irreducible representations, we shall work under the assumption \hyperlink{condition (A)}{{(A)}} and fix a choice of such a form. When does such a form exist? If $\mathbb{G}=\mathrm{GL}_n$, then one can just take the trace form; for a general $\mathbb{G}$, one can again take  trace form if $p$ is a very good prime (see \cite[I.5.3]{Springer--Steinberg_1970_Conj}). For further details on such bilinear forms we refer to \cite[2.5]{Let2005book}. 

\vspace{2mm} We shall also fix a non-trivial character $\psi\colon \mathbb{F}_q\rightarrow \overline{\mathbb{Q}}_{\ell}^{\times}$: So for any $y\in \mathfrak{g}^F\cong (G^{r-1})^F$ we have a character 
$$\psi_y\colon x\longmapsto \psi(\mu(x,y))$$ 
of $\mathfrak{g}^F$. Note that the non-degeneracy of $\mu$ implies that $\psi_{(-)}$ is an isomorphism from $\mathfrak{g}^F$ to $\mathrm{Irr}(\mathfrak{g}^F)$, thus combining $\psi$ with the above map $\mathrm{Irr}(G^F)\rightarrow G_1^F\backslash\mathrm{Irr}(\mathfrak{g}^F)$ we get another map
$$\Omega\colon \mathrm{Irr}(G^F)\longrightarrow G_1^F\backslash \mathfrak{g}^F.$$

\begin{defi}\label{defi: orbit rep}
An irreducible representation $\sigma$ of $G^F$ is called \emph{regular} (resp.\ \emph{nilpotent}, \emph{semisimple}), if $\Omega(\sigma)$ is regular (resp.\ nilpotent, semisimple).
\end{defi}

In Theorem~\ref{thm:orbit type} we will prove that the orbit of any generic higher Deligne--Lusztig representation is regular and semisimple.

\vspace{2mm} Some conventions: 
\begin{itemize}
\item In the remaining part of this paper, unless otherwise specified, $r>1$.
\item For $a,b$ in a  group, we often use the notation $a^b:=b^{-1}ab=:{^{b^{-1}}a}$.
\item We use $H_c^*(-)$ short for $\sum_i(-1)^iH_c^i(-,\overline{\mathbb{Q}}_{\ell})$.
\item If $\sigma$ is a virtual representation, then we denote by $|\sigma|$ the corresponding true representation (by taking absolute values of the coefficients of all irreducible constituents).
\end{itemize}

\section{Induction, orbits, and dimensions}\label{sec:main strategy}

When $r$ is even, we obtained the following algebraisation theorem in \cite{ChenStasinski_2016_algebraisation}:

\begin{thm}\label{thm:main_even}
Suppose that $r$ is even. Then 
$$R_{T,U}^{\theta}\cong\mathrm{Ind}_{(TG^{l'})^F}^{G^F}\widetilde{\theta}$$
if $\theta\in \mathrm{Irr}({T^F})$ is strongly generic.
\end{thm}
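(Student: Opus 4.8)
The plan is to prove the isomorphism by showing that the virtual inner product $\langle R_{T,U}^{\theta},\mathrm{Ind}_{(TG^{l'})^F}^{G^F}\widetilde{\theta}\rangle_{G^F}$ equals $+1$; since both sides are (up to sign) irreducible, this forces the stated isomorphism with no sign correction. The two irreducibility statements are where the hypotheses enter. On the left, genericity gives $\langle R_{T,U}^{\theta},R_{T,U}^{\theta}\rangle=1$, so $\varepsilon R_{T,U}^{\theta}$ is a genuine irreducible for some $\varepsilon\in\{\pm1\}$. On the right, since $r$ is even we have $l=l'$, the group $G^{l}=G^{l'}$ is abelian, and $(G^{l})^F$ is a normal subgroup of $G^F$ (it is the $F$-fixed locus of $\ker\rho_{r,l}$); hence $\lambda:=\widetilde{\theta}|_{(G^{l})^F}$ is a linear character of an abelian normal subgroup, and the condition in Definition~\ref{defi: generic conditions}(4) says precisely that its inertia subgroup in $G^F$ is $(TG^{l'})^F$. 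As $\widetilde{\theta}$ is a linear character of $(TG^{l'})^F$ lying over $\lambda$, Clifford theory makes $\mathrm{Ind}_{(TG^{l'})^F}^{G^F}\widetilde{\theta}$ irreducible. So it suffices to compute the inner product, which will simultaneously pin down $\varepsilon=+1$.

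First I would rewrite the inner product geometrically. By Frobenius reciprocity it equals $\langle R_{T,U}^{\theta}|_{(TG^{l'})^F},\widetilde{\theta}\rangle_{(TG^{l'})^F}$, and unwinding $R_{T,U}^{\theta}=H_c^{*}(L^{-1}(FU))_{\theta}$ and applying Grothendieck's trace formula to the finite-order automorphisms coming from the $(TG^{l'})^F\times T^F$-action on $L^{-1}(FU)$ (decomposing each element into its $p$- and $p'$-parts in the usual Deligne--Lusztig way), it becomes
\[
\frac{1}{|(TG^{l'})^F|\,|T^F|}\ \sum_{g\in(TG^{l'})^F}\ \sum_{t\in T^F}\ \overline{\widetilde{\theta}(g)}\,\theta(t)\cdot\#\bigl\{x\in L^{-1}(FU):x^{-1}gx=t\bigr\}.
\]
Thus the problem reduces to counting the transporter varieties $\{x\in L^{-1}(FU):x^{-1}gx=t\}$ for the relevant pairs $(g,t)$.

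The geometric heart — and the step I expect to be the main obstacle — is the analysis of these transporter varieties through the reduction morphism $\rho_{r,l}\colon L^{-1}(FU)\to L^{-1}(FU_{l})$ onto the level-$l$ Deligne--Lusztig variety. A pair $(g,t)$ contributes only if $g$ is $G^F$-conjugate to $t$; since $g\in(TG^{l'})^F$ reduces modulo $\pi^{l}$ into $T_{l}^F$, the strong-genericity condition (which, via reduction mod $\pi^{l'}$, bounds the relevant centralisers in $G_{l'}^F$ down to $T_{l'}^F$; cf.\ the reformulation in the proof of Proposition~\ref{prop: regular in GL_n}) together with the regularity condition pins down which $g$ survive and forces $t$ to be conjugate to $g$ along a coset reducing into $T_{l}$, so that a single $G^F$-orbit is at play. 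Here the hypothesis $r=2l$ is essential: the fibres of $\rho_{r,l}$ are torsors under twists of the \emph{abelian} unipotent group $G^{l}$ — concretely, torsors built from an isogeny of the vector group $G^{l}$ that combines the Frobenius with a linear automorphism — so their $\overline{\mathbb{Q}}_{\ell}$-cohomology is one-dimensional up to a Tate twist and carries exactly the character $\widetilde{\theta}$. Integrating over the fibres and then counting on $L^{-1}(FU_{l})$ collapses the double sum to the single value $1$.

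Finally, no sign appears because, when $r$ is even, every fixed-point variety entering the Lefschetz computation is even-dimensional, so there is no parity correction; equivalently $\dim\mathrm{Ind}_{(TG^{l'})^F}^{G^F}\widetilde{\theta}=[G^F:(TG^{l'})^F]=|G_{1}^F/T_{1}^F|_{p'}\cdot q^{(r-1)\#\Phi^{+}}$ is positive and matches $\dim R_{T,U}^{\theta}$ with $\varepsilon=+1$. It is exactly this parity feature that breaks down for odd $r$, where two Heisenberg-type extensions and a genuine sign function intervene — the phenomenon the present paper is built to handle, the inner-product step above being generalised to Proposition~\ref{prop:middle level step} and Lemma~\ref{lemm: statement (1)}.
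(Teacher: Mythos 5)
Your overall skeleton is the right one (and is essentially how this theorem was proved in the earlier paper): show both sides are irreducible — genericity for $R_{T,U}^{\theta}$, Clifford theory with the stabiliser condition for $\mathrm{Ind}_{(TG^{l'})^F}^{G^F}\widetilde{\theta}$, using that for even $r$ one has $l=l'$ and $\widetilde{\theta}$ is a linear character lying over $\widetilde{\theta}|_{(G^{l})^F}$ — and then force the isomorphism, with the sign coming for free from positivity of an inner product of genuine characters ($\langle \varepsilon R_{T,U}^{\theta},\mathrm{Ind}\,\widetilde{\theta}\rangle\in\{0,1\}$, so an inner product value of $1$ gives $\varepsilon=+1$). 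That part is sound.

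The genuine gap is the computation of $\langle R_{T,U}^{\theta},\mathrm{Ind}_{(TG^{l'})^F}^{G^F}\widetilde{\theta}\rangle_{G^F}=1$, which you yourself flag as ``the main obstacle'' and then do not actually carry out. Your proposed route — Frobenius reciprocity, Grothendieck's trace formula, and pointwise counting of the transporter varieties $\{x\in L^{-1}(FU):x^{-1}gx=t\}$ — is not workable as stated: it would require control of the character of $R_{T,U}^{\theta}$ at arbitrary elements of $(TG^{l'})^F$ (including non-semisimple ones), which is precisely what is not available at higher level; and the claim that the fibres of $\rho_{r,l}\colon L^{-1}(FU)\to L^{-1}(FU_l)$ have one-dimensional cohomology ``carrying exactly the character $\widetilde{\theta}$'' is unsubstantiated (the fibres do not carry a $(TG^{l})^F$-action fibrewise in the way you need), and ``integrating over the fibres \ldots collapses the double sum to the single value $1$'' is an assertion, not an argument. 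The established method is different: one realises $\mathrm{Ind}_{(TG^{l'})^F}^{G^F}\widetilde{\theta}$ cohomologically as $H_c^*(L^{-1}(FU^{\pm}))_{\theta}$ (this is where $r$ even is used, via $TG^{l}/U^{\pm}\cong T$), and then computes the inner product of the two cohomology spaces by the K\"unneth formula on a $T^F\times T^F$-equivariant quotient variety of pairs, stratified by roots, killing all off-diagonal strata by extending the finite torus action to a connected group action (homotopy invariance) — this is exactly where the regularity of $\theta$ enters, and it is the argument this paper generalises in Proposition~\ref{prop:middle level step}, Lemma~\ref{lemm: statement (1)} and Section~\ref{sec: tech1}. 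Without that (or an equivalent) computation your proof has no content at its core step. Finally, your closing paragraph's justification that ``no sign appears because every fixed-point variety is even-dimensional'' is both unproved and unnecessary (the positivity argument already fixes the sign), and the appeal to $\dim R_{T,U}^{\theta}$ is circular, since the dimension formula is a consequence of the theorem, not an input.
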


The fundamental idea in approaching Theorem~\ref{thm:main_even} is to realise the algebraically constructed representations via cohomology, and then compare them with $R_{T,U}^{\theta}$, as achieved in \cite{ChenStasinski_2016_algebraisation}. The main task of this paper is to generalise this theorem for an arbitrary $r>1$ (Theorem~\ref{thm:full alg}). 

\vspace{2mm} However, when $r$ is odd there are two additional non-trivial extensions of representations involved (see Section~\ref{sec:alg construction}), which make the situation considerably more difficult. To overcome the problem, our first step is to  pass from $G^F$ to $(TG^{l'})^F$ via an induction theorem; for this we need some extra inputs:

\vspace{2mm} Let $U^{l',l}:=U^{l'}(U^-)^{l}$.

\begin{prop}\label{prop:middle level step}
If $\theta$ is regular, then  
$$\langle R_{T,U}^{\theta}, H_c^*(L^{-1}(FU^{l',l}))_{\theta}\rangle_{G^F}=\# \mathrm{Stab}_{W(T)^F}(\theta).$$
\end{prop}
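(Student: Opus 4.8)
The plan is to compute the inner product $\langle R_{T,U}^{\theta}, H_c^*(L^{-1}(FU^{l',l}))_{\theta}\rangle_{G^F}$ by reducing it, via standard manipulations with Deligne--Lusztig varieties, to a count of double cosets or fixed points indexed by the Weyl group, and then to show that the regularity of $\theta$ kills every contribution except the one from $\mathrm{Stab}_{W(T)^F}(\theta)$. First I would recall that $R_{T,U}^{\theta} = H_c^*(L^{-1}(FU))_{\theta}$ and use the (by now classical, cf.\ \cite{DL1976}, \cite{Lusztig2004RepsFinRings}, \cite{ChenStasinski_2016_algebraisation}) identification of $\mathrm{Hom}_{G^F}$-spaces between two such cohomology modules with the $\theta\times\theta$-isotypic part of the cohomology of a fibre product
$$ L^{-1}(FU) \times_{G^F} L^{-1}(FU^{l',l}), $$
on which $T^F\times T^F$ acts on the right. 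By the Lang--Steinberg theorem this fibre product is $G^F$-equivariantly isomorphic to a variety of the form
$$ \{(x,x')\in U\times U^{l',l} : x^{-1}F(x)=x'^{-1}F(x')\}/\sim, $$
or more precisely to $\{ g\in G : g^{-1}F(g)\in FU\cdot (FU^{l',l})^{-1}\}$-type set, and the usual Bruhat-decomposition argument breaks this up into pieces indexed by $w\in W(T)^F$ (or rather by the relevant double cosets $U\backslash (\text{something})/U^{l',l}$, with $U^{l',l}$ being squeezed between $G^l$ and $G^{l'}$).

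The second step is the geometric heart: for each index $w$ one analyses the corresponding piece $X_w$. When $w=1$ (or more precisely $w$ in the stabiliser), the relevant torus actions become an honest $\mathcal{T}^\alpha$-type action and the $\theta$-isotypic cohomology contributes $1$ each, giving the count $\#\mathrm{Stab}_{W(T)^F}(\theta)$; this is exactly the mechanism already used in the even-level case, and here it survives because $U^{l',l}$ is sandwiched $U^l(U^-)^l \subseteq U^{l',l} \subseteq G^{l'}$ so that on the relevant quotients one still sees a split unipotent group with a free torus action. For $w$ outside the stabiliser, one shows the $\theta$-isotypic part of $H_c^*(X_w)$ vanishes: $X_w$ fibres over a product of affine spaces $U_\alpha$'s (contributing only a power of $q$, hence a trivial $T^F$-action after passing to $G^{r-1}$), times a piece carrying the action of a subtorus $\mathcal{T}^\alpha$ through some norm map $N^{F^a}_F$; regularity of $\theta$ says precisely that $\theta$ is non-trivial on $N^{F^a}_F((\mathcal{T}^\alpha)^{F^a})$ for the roots $\alpha$ moved by $w$, so the $\theta$-isotypic component of that factor's cohomology is $0$, hence the whole $H_c^*(X_w)_\theta=0$ by the Künneth formula. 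Summing over $w$ then gives the claimed equality.

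The main obstacle, I expect, is the bookkeeping in the second step: making precise how $U^{l',l}=U^{l'}(U^-)^l$ (which is \emph{not} symmetric between $U$ and $U^-$, unlike $U^{\pm}=U^l(U^-)^l$) interacts with the Bruhat decomposition, and identifying exactly which torus factor $\mathcal{T}^\alpha$ and which norm map show up for each $w$, so that the regularity hypothesis can be invoked verbatim in the form of Definition~\ref{defi: generic conditions}(2). One has to be careful that the asymmetry $l'$ versus $l$ in $U^{l',l}$ does not break the argument: the point is that $U^{l'}/U^{l}$ and $(U^-)^{l}/(U^-)^{r-1}$ are the pieces that matter, and their quotients by the relevant subgroups are still affine spaces with the expected torus action. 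A secondary technical point is controlling the $e$-multiplicities and signs: since these are virtual representations, I would phrase everything at the level of $\langle -,-\rangle_{G^F}$ of virtual characters via the trace of Frobenius on cohomology (Grothendieck's fixed-point formula), so that the alternating sums collapse correctly and the final answer is a genuine non-negative integer, namely $\#\mathrm{Stab}_{W(T)^F}(\theta)$. I would also use the fact (cited just after Proposition~\ref{prop: regular in GL_n}) that for generic $\theta$ the representation $R_{T,U}^\theta$ is $\pm$-irreducible only as a sanity check, not in the proof itself, since here we only need $\theta$ regular.
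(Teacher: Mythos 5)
The paper itself does not reprove this statement: its entire proof is the observation that the proposition is a special case of \cite[2.4]{Chen_2017_InnerProduct}. What you propose instead is a direct proof, and the route you sketch is essentially the argument behind that cited result, and also the one the paper carries out in the closely parallel situation of Lemma~\ref{lemm: statement (1)} in Section~\ref{sec: tech1}: pass to the quotient of $L^{-1}(FU)\times L^{-1}(FU^{l',l})$ by the diagonal $G^F$, realised as a variety of triples $\{(x,x',y)\in FU\times FU^{l',l}\times G \mid xF(y)=yx'\}$ with its $T^F\times T^F$-action, stratify according to a Bruhat-type decomposition of the $y$-component, kill the strata attached to $w$ not stabilising $\theta$ by extending the $(T^{r-1})^F$-action to a connected group whose restriction to $F$-fixed points factors through a norm map $N^{F^a}_F$ on some $\mathcal{T}^{\alpha}$ (so that regularity applies verbatim), and count one for each $w$ in the stabiliser via a torus fixed-point reduction as in Lemma~\ref{lemm:lemm1 for statement (1)}. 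So your approach buys a self-contained argument where the paper buys brevity by outsourcing; conceptually the two are the same method. The caveat is that your write-up stays at the level of a plan precisely where the real work sits: the vanishing on the ``bad'' strata is not automatic from ``regularity kills a torus factor in a K\"unneth decomposition'' but requires constructing an explicit commutator morphism (the map $\Psi$ of Section~\ref{sec: tech1}, built from the relations \cite[XX~2.2]{SGA3} and the Chevalley commutator formula) so that homotopy invariance of $\ell$-adic cohomology forces the norm-image subgroup to act trivially; likewise the identification of the contribution $\#\mathrm{Stab}_{W(T)^F}(\theta)$ from the remaining strata needs the fixed-point computation, not just the heuristic that ``the $\theta$-isotypic part contributes $1$.'' You flag both points as obstacles rather than resolving them, so as written this is a correct outline of the known proof rather than a complete argument; filling in those two steps (or simply invoking \cite[2.4]{Chen_2017_InnerProduct}, as the paper does) is what is still required.
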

\begin{proof}
This is a special case of \cite[2.4]{Chen_2017_InnerProduct}.
\end{proof}

\begin{lemm}\label{lemm:permut induced rep}
Let $X_0:=L^{-1}(FU^{l',l})\cap TG^{l'}$. Then 
$$L^{-1}(FU^{l',l})=\coprod_{g\in G^F/(TG^{l'})^F}gX_0.$$
Moreover, as representations of $G^F$, we have
$$H^*_c(L^{-1}(FU^{l',l}))_{\theta}=\mathrm{Ind}_{(TG^{l'})^F}^{G^F} H_c^*(X_0)_{\theta},$$
for any $\theta\in\mathrm{Irr}(T^F)$.
\end{lemm}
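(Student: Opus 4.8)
The plan is to prove both assertions from a single geometric fact: that $TG^{l'}$ acts on $L^{-1}(FU^{l',l})$ on the right by translation, and that this action is free with the $G^F$-orbits of this action being exactly the cosets $gX_0$. First I would check that $X_0 = L^{-1}(FU^{l',l})\cap TG^{l'}$ is stable under the right translation action of $(TG^{l'})^F$: indeed if $x\in L^{-1}(FU^{l',l})$ and $t\in (TG^{l'})^F$ then $L(xt) = t^{-1}L(x)F(t) = t^{-1}L(x)t$ since $t$ is $F$-fixed, and because $G^l = U^{\pm}$ is normalised by $T$ and $G^{l'}$ normalises everything below it (using $[G^i,G^j]\subseteq G^{i+j}$ and $2l'\geq r$, $l'+l\geq r$), conjugation by $t\in TG^{l'}$ sends $FU^{l',l}$ into itself; combined with $xt\in TG^{l'}$ this gives $xt\in X_0$. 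So $(TG^{l'})^F$ acts on $X_0$, which is what we need for $H_c^*(X_0)_\theta$ to make sense as a $(TG^{l'})^F$-representation (the left action being the residual $T^F$-action, or rather we keep track of the $T^F$-action defining the $\theta$-part while $(TG^{l'})^F$ acts, say, by left translation — I will need to be careful about which side each group acts on, matching the conventions used for $R_{T,U}^\theta$).

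Next I would establish the disjoint union decomposition $L^{-1}(FU^{l',l}) = \coprod_{g\in G^F/(TG^{l'})^F} gX_0$. The key point is that for $x\in L^{-1}(FU^{l',l})$, the image $\rho_{r,l}(x)\in G_l$ satisfies $L(\rho_{r,l}(x))\in \rho_{r,l}(FU^{l',l})$, and since $U^{l'}$ reduces to $U_{l-1}\cdot(\text{something})$ modulo $\pi^l$ — more precisely $\rho_{r,l}(U^{l',l})$ lands in $\rho_{r,l}(U)$, so $\rho_{r,l}(x)\in L^{-1}(FU)$ at level $l$. I would then use the level-$l$ (even-level-flavoured) fact that the $G_l^F$-translates partition $L^{-1}(FU)\subseteq G_l$ into cosets of $(TU^{\pm})_l^F$-type subgroups — but more directly, the cleanest route is: given $x,y\in L^{-1}(FU^{l',l})$ with the same image in $G^F\backslash(\text{something})$, show $x^{-1}y\in TG^{l'}$. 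Write $g = $ the $G^F$-part; one shows every $x$ can be written $x = g\cdot x_0$ with $g\in G^F$ and $x_0\in X_0$, and that $g$ is unique modulo $(TG^{l'})^F$. This is essentially the statement that the map $G^F\times X_0 \to L^{-1}(FU^{l',l})$ descends to a bijection on $G^F/(TG^{l'})^F$-orbits; the surjectivity uses that $L\colon G\to G$ is surjective and a Lang-type argument on the quotient $G/TG^{l'}$, and injectivity uses $G^F\cap TG^{l'}(\overline{\mathbb{F}}_q) $-style bookkeeping together with the fact that $L^{-1}(FU^{l',l})\cap gTG^{l'} = gX_0'$ for a conjugate $X_0'$ which, by the stability argument above applied with $g$-twists, equals $gX_0$ exactly when $g\in G^F$.

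Given the decomposition, the cohomological statement follows formally. Since the $gX_0$ for $g$ ranging over coset representatives of $G^F/(TG^{l'})^F$ are the connected-component-wise pieces (disjoint closed-and-open, as translates of a locally closed subvariety that are permuted), $H_c^*(L^{-1}(FU^{l',l})) = \bigoplus_{g} H_c^*(gX_0)$ as vector spaces, $G^F$ permutes the summands transitively with stabiliser of the base piece $X_0$ equal to $(TG^{l'})^F$, and $H_c^*(X_0)$ carries the resulting $(TG^{l'})^F$-action (translation by $g\in (TG^{l'})^F$ maps $gX_0 = X_0$ to itself). This is precisely the setup of the standard lemma that $H_c^*$ of a variety with a free transitive-on-components $G^F$-action, restricted to a stabiliser $H$, is $\mathrm{Ind}_H^{G^F}$ of $H_c^*$ of one component (cf.\ the usual induction-from-parabolic arguments in Deligne--Lusztig theory). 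Passing to $\theta$-isotypic parts commutes with $\mathrm{Ind}$ since the $T^F$-action (defining $\theta$) commutes with the left $G^F$-action, giving $H_c^*(L^{-1}(FU^{l',l}))_\theta = \mathrm{Ind}_{(TG^{l'})^F}^{G^F} H_c^*(X_0)_\theta$.

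I expect the main obstacle to be the disjoint-union decomposition, specifically proving that distinct $G^F$-cosets give genuinely disjoint translates and that every point lies in some $gX_0$ with $g\in G^F$ — i.e.\ controlling the interaction between the Lang condition $L(x)\in FU^{l',l}$, the subgroup $TG^{l'}$, and the $F$-rationality. The stability check for $X_0$ and the formal cohomological bookkeeping are routine; the combinatorial/geometric heart is verifying that $L^{-1}(FU^{l',l})$ is a disjoint union of $TG^{l'}$-cosets-worth of pieces indexed cleanly by $G^F/(TG^{l'})^F$, which will rely on the commutation relations $[G^i,G^j]\subseteq G^{i+j}$ together with $l+l'\geq r$ and $2l\geq r$ to guarantee that conjugation by $TG^{l'}$ does not move $FU^{l',l}$ and that the relevant quotient behaves like an affine space on which Lang's theorem applies.
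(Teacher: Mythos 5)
Your overall shape agrees with the paper's: establish the coset decomposition, then get the induction formula from the standard fact about compactly supported cohomology of a disjoint union of translates permuted by $G^F$ (the paper disposes of this second step by quoting Lusztig's induction lemma), with the $\theta$-part passing through because the right $T^F$-translation commutes with the left $G^F$-action. But two points need repair. First, your opening claim --- that $(TG^{l'})^F$ acts on $L^{-1}(FU^{l',l})$ and on $X_0$ by \emph{right} translation because ``conjugation by $t\in TG^{l'}$ sends $FU^{l',l}$ into itself'' --- is false precisely in the odd-$r$ case this paper treats: for $u\in FU^{l'}$ and $h\in G^{l'}$ the commutator lies in $G^{2l'}=G^{r-1}$, which in general has a nontrivial $T^{r-1}$-component (already in $\mathrm{SL}_2$ with $r=3$, conjugating $1+\pi e$ by $1+\pi f$ produces a $\pi^2$-diagonal term), so $t^{-1}(FU^{l',l})t\not\subseteq FU^{l',l}$. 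Fortunately this is not needed: the lemma only uses the \emph{left} translation action of $(TG^{l'})^F$ on $X_0$ (immediate, since $L(hx)=L(x)$ for $h\in G^F$ and $TG^{l'}$ is a group) together with the right $T^F$-action defining the $\theta$-part (which does preserve $X_0$, because $T$ normalises $FU^{l'}$ and $F(U^-)^{l}$ and conjugation by $T$ respects the congruence filtration); it is the left action that gets induced.

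Second, the heart of the statement --- that every $x\in L^{-1}(FU^{l',l})$ lies in some $gX_0$ with $g\in G^F$ --- is only asserted in your sketch, and the level-$l$ reduction you begin with is a dead end. The useful reduction is modulo $\pi^{l'}$: since $FU^{l',l}\subseteq G^{l'}$ and $G^{l'}$ is $F$-stable, the condition $L(x)\in FU^{l',l}$ forces $L(\rho_{r,l'}(x))=1$, i.e.\ $\rho_{r,l'}(x)\in G_{l'}^F$; lifting it to some $g\in G^F$ (surjectivity of $G^F\to G_{l'}^F$, i.e.\ Lang's theorem for the connected kernel $G^{l'}$) gives $g^{-1}x\in L^{-1}(FU^{l',l})\cap G^{l'}\subseteq X_0$. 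This is the paper's two-line argument. Your alternative phrase ``Lang-type argument on the quotient $G/TG^{l'}$'' can indeed be completed along the same lines --- $L(x)\in FU^{l',l}\subseteq TG^{l'}$ means the coset $xTG^{l'}$ is $F$-stable, and Lang's theorem for the \emph{connected} group $TG^{l'}$ yields $h\in TG^{l'}$ with $L(h)=L(x)$, so that $g:=xh^{-1}\in G^F$ and $h\in X_0$ --- but you must actually carry this out rather than name it. Disjointness is then one line, which your ``bookkeeping'' remark correctly anticipates: if $gx_0=g'x_0'$ with $g,g'\in G^F$ and $x_0,x_0'\in X_0\subseteq TG^{l'}$, then $g^{-1}g'\in G^F\cap TG^{l'}=(TG^{l'})^F$. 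With these repairs your argument coincides in substance with the paper's.
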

\begin{proof}
The second assertion follows from the first assertion and \cite[1.7]{Lusztig_whiteBk}. For the first assertion, first note that the $\coprod_{g\in G^F/(TG^{l'})^F}gX_0$ is by construction a subset of the $L^{-1}(FU^{l',l})$. On the other hand, if $x\in L^{-1}(FU^{l',l})$, then $\rho_{r,l'}(x)^{-1}F(\rho_{r,l'}(x))=1$ which implies that $\rho_{r,l'}(x)\in G_{l'}^F$. So for a representative $g\in G^F$ (of $\rho_{r,l'}(x)$) one has $g^{-1}x\in L^{-1}(FU^{l',l})\cap G^{l'}\subseteq X_0$; this shows that the $L^{-1}(FU^{l',l})$ is also a subset of the $\coprod_{g\in G^F/(TG^{l'})^F}gX_0$.
\end{proof}

We also need to know some properties of $H_c ^*(X_0)_{\theta}$:

\begin{lemm}\label{lemm: statement (1)}
If $\theta$ is regular, then
$$\langle H_c ^*(X_0)_{\theta},H_c ^*(X_0)_{\theta}\rangle_{(TG^{l'})^F}=1.$$
\end{lemm}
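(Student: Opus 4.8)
The plan is to compute the left-hand side head-on, by a Lang--Steinberg fixed-point argument in the spirit of the Deligne--Lusztig inner product formula, organised so that the regularity of $\theta$ annihilates all but one contribution. Since $(TG^{l'})^F$ acts freely on $X_0$ by left translation, the standard $\ell$-adic formalism rewrites $\langle H_c^*(X_0)_{\theta},H_c^*(X_0)_{\theta}\rangle_{(TG^{l'})^F}$ as the virtual multiplicity of $\theta\boxtimes\overline{\theta}$ in $H_c^*$ of the quotient $(TG^{l'})^F\backslash(X_0\times X_0)$, for the diagonal left $(TG^{l'})^F$-action and the residual right $T^F\times T^F$-action. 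The morphism $(x,x')\mapsto(x^{-1}x',L(x))$ identifies this quotient with the explicit variety
$$\Sigma:=\{(g,v)\in TG^{l'}\times FU^{l',l}\ :\ g^{-1}vF(g)\in FU^{l',l}\},$$
using that, $TG^{l'}$ being connected, $L$ is surjective on it with all fibres single $(TG^{l'})^F$-cosets (one checks that $g^{-1}vF(g)=L(x')$ along the way); the residual action becomes $(g,v)\cdot(s,s')=(s^{-1}gs',\,v^{s})$.

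The next step is to peel off the torus direction. Reduction modulo $G^{l'}$ forces the image of $g$ in $T/T^{l'}\cong T_{l'}$ to lie in $T_{l'}^F$, so $\Sigma$, as a $T^F\times T^F$-variety, is induced from its open-and-closed fibre
$$\Sigma_0:=\{(z,v)\in G^{l'}\times FU^{l',l}\ :\ z^{-1}vF(z)\in FU^{l',l}\}$$
over the trivial class, equipped with the action of the stabiliser $K:=\{(s,s')\in T^F\times T^F:\rho_{r,l'}(s)=\rho_{r,l'}(s')\}$. Writing $K\cong T^F\times(T^{l'})^F$ via $(s,s')\mapsto(s,s^{-1}s')$, the first factor acts on $\Sigma_0$ by simultaneous conjugation $(z,v)\mapsto(z^{s},v^{s})$, the second factor acts freely by $(z,v)\mapsto(zu,v)$, and the character to be detected is trivial on $T^F$ and equals $\theta_0^{-1}$ on $(T^{l'})^F$, where $\theta_0:=\theta|_{(T^{l'})^F}$. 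By Frobenius reciprocity the lemma is thus reduced to showing that the trivial $T^F$-representation occurs with multiplicity exactly $1$ in the virtual $T^F$-module $H_c^*(\Sigma_0)_{\theta_0^{-1}}$.

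The heart of the matter is this last computation, which is a Deligne--Lusztig-type calculation for the congruence group $G^{l'}$. The map $(z,v)\mapsto(z^{-1}vF(z),v)$ exhibits $\Sigma_0$ as a family of torsors over an affine space, with structure group an $F$-twisted form of $(G^{l'})^F$; hence $H_c^*(\Sigma_0)$ decomposes over $\rho\in\mathrm{Irr}((G^{l'})^F)$ through the associated local systems, and the $\theta_0^{-1}$-isotypic part for the free $(T^{l'})^F$-action keeps only those $\rho$ whose restriction to $(T^{l'})^F$ involves $\theta_0^{-1}$. I would then show that all such $\rho$ contribute zero except the ``naive'' one (trivial in every root direction), the vanishing being an exponential-sum vanishing along root subgroups of exactly the kind that forces $R_{T,U}^{\theta}$ to be irreducible in the classical theory. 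This is where regularity is used: on a root subgroup $U_{\alpha}$ the relevant exponential sum has frequency governed by $\theta$ restricted to $\mathcal{T}^{\alpha}=(T^{\alpha})^{r-1}$ --- via the commutator pairing $[G^{l'},G^{l'}]\subseteq G^{r-1}=\mathfrak{g}$ and the coroot direction of $\alpha$ --- and the hypothesis that $\theta$ is non-trivial there makes it vanish. The surviving $\rho$ contributes the cohomology of an affine space with trivial coefficients, with the conjugation action of $T^F$ entering only through a determinant character which one checks is trivial; so the answer is $1$.

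The step I expect to be the genuine obstacle is this last decomposition when $r$ is odd. Then $G^{l'}$ is a class-$2$ nilpotent group --- the Heisenberg phenomenon the introduction alludes to --- with $G^{r-1}=\mathfrak{g}$ central and with commutator inducing an alternating pairing $G^{l'}/G^{l}\times G^{l'}/G^{l}\to G^{r-1}$ essentially given by the Lie bracket; composing with $\psi$ and $\theta$ turns this into a symplectic form which is non-degenerate on each root plane precisely by regularity. Consequently the representations $\rho$ of $(G^{l'})^F$ that have to be analysed are no longer all one-dimensional: the relevant one is an oscillator/Weil-type representation, the associated local system on $\Sigma_0$ is genuinely quadratic, and the exponential sums to be estimated are Gauss-sum-like rather than the linear Artin--Schreier sums of the even-level case. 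Proving that, after passing to the $\theta_0^{-1}$-part with $\theta$ regular, everything nonetheless collapses to the naive even-level answer --- uniformly in $r$, for an arbitrary reductive $\mathbb{G}$, and with the reductions to simply-connected covers that the introduction flags --- is the technical crux, and presumably the reason this lemma is given a section of its own.
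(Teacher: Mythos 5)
Your opening reductions are sound and essentially parallel to the paper's first step: identifying $(TG^{l'})^F\backslash(X_0\times X_0)$ with an explicit variety of pairs $(y,x)$ with $y\in TG^{l'}$, $x,\,y^{-1}xF(y)\in FU^{l',l}$, and then isolating the relevant isotypic part via Frobenius reciprocity (the paper keeps both Lang values and uses K\"unneth, but the content is the same). The problem is that everything after that is a plan, not a proof, and the part you defer is precisely the lemma. The decisive claim --- that after decomposing $H_c^*(\Sigma_0)$ ``over $\rho\in\mathrm{Irr}((G^{l'})^F)$'' all non-naive $\rho$ contribute zero when $\theta$ is regular --- is asserted via an analogy with classical exponential-sum vanishing, and you yourself flag the odd-$r$ case (where the relevant $\rho$ are Heisenberg/Weil-type and the sums are Gauss-sum-like) as ``the technical crux''; no argument is given for it. Even the setup of that decomposition is unjustified: the map $(z,v)\mapsto(z^{-1}vF(z),v)$ makes the fibres torsors under groups $(G^{l'})^{F_v}$ that vary with $v$ (only non-canonically isomorphic to $(G^{l'})^F$), the free $(T^{l'})^F$-action you use to pick out $\theta_0$ is by right translation rather than through the structure group, and the conjugation action of $T^F$ does not obviously act ``only through a determinant character'' on the surviving piece --- so the concluding multiplicity-one statement is also a handwave rather than a computation.

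For comparison, the paper fills exactly this hole without ever touching the representation theory of $(G^{l'})^F$ or Weil representations. After the Iwahori decomposition it splits the variety into a closed stratum (where the $(U^-)^{l'}$-coordinate lies in $(U^-)^{l}$) and an open one. On the closed stratum the diagonal torus $\{(t,t):t\in T_1\}$ acts, its fixed locus is just $T^F$, and a fixed-point argument gives the contribution $1$ (this replaces your ``determinant character'' step). On the open stratum it stratifies by the first negative root $\beta$ in which the coordinate is nontrivial modulo level $l$, and constructs an explicit morphism $\Psi$ from the rank-one relations of \cite[XX~2.2]{SGA3} which extends the $(T^{r-1})^F$-action to an action of the connected-by-norms group $H_{\beta}=\{t\in T^{r-1}:L(t)\in\mathcal{T}^{-F(\beta)}\}$; homotopy invariance of $\ell$-adic cohomology then forces the norm subgroup $N^{F^a}_F\bigl((\mathcal{T}^{-F(\beta)})^{F^a}\bigr)$ to act trivially on $H_c^j$, and regularity of $\theta$ (nontriviality on exactly these norm subgroups) kills the $\theta$-isotypic part. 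If you want to complete your route, you would have to prove the Gauss-sum vanishing for the oscillator-type local systems uniformly in $r$ and for general reductive $\mathbb{G}$, which is a substantially harder task than the commutator-plus-homotopy argument above; as it stands, the proposal does not establish the lemma.
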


The argument of this lemma is a bit technical, and uses a modified version of a theme initiated by Lusztig \cite{Lusztig2004RepsFinRings} and continued in \cite{ChenStasinski_2016_algebraisation}; we will treat it in Section~\ref{sec: tech1}.

\begin{lemm}\label{lemm: statement (2)}
For any $\theta\in \mathrm{Irr}({T^F})$, $H_c ^*(X_0)_{\theta}|_{(G^l)^F}$ is a multiple of $\widetilde{\theta}|_{(G^l)^F}$, where $\widetilde{\theta}$ is as in Definition~\ref{defi: generic conditions}~(4). Moreover, if $\theta$ is regular, then this multiple is non-zero.
\end{lemm}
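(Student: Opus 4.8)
The plan is to analyse the restriction of $H_c^*(X_0)_\theta$ to the normal subgroup $(G^l)^F$ of $(TG^{l'})^F$ via an explicit action of $(G^l)^F$ on the variety $X_0 = L^{-1}(FU^{l',l})\cap TG^{l'}$ by right translation. First I would record that $(G^l)^F$ acts on $X_0$ on the right (indeed $G^l$ is central enough: $[G^i,G^j]\subseteq G^{i+j}$ forces $G^l\subseteq TG^{l'}$ to be normal and to have its elements commute with $G^{l'}$, and $FU^{l',l}$ is stable under the relevant translations since $G^l\subseteq U^{l',l}$), so that $H_c^*(X_0)$ is a virtual representation of the finite group $(TG^{l'})^F\rtimes\text{-ish}$ structure restricting to $(G^l)^F$. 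The key point is that $T^F$ and $(G^l)^F$ act on $X_0$ on opposite sides and both actions commute, so we may simultaneously decompose $H_c^*(X_0)$ into $(\theta,\chi)$-isotypic pieces for $\theta\in\mathrm{Irr}(T^F)$ and $\chi\in\mathrm{Irr}((G^l)^F)$ (note $(G^l)^F$ is abelian since $G^l$ is commutative). The claim is then that on $H_c^*(X_0)_\theta$ only the single character $\chi=\widetilde\theta|_{(G^l)^F}$ occurs.

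To see this, the main step is to compute, for $z\in (G^l)^F$ written $z=1\cdot u$ with $u$ in the relevant unipotent part, the fixed-point set $X_0^{(t,z)}$ for $t\in T^F$ and extract the eigenvalue of $z$. Concretely, if $x\in X_0$ satisfies $xz = tx$ (equivalently $x^{-1}tx = z^{-1}$... up to the usual bookkeeping), then reducing modulo appropriate powers of $\pi$ and using that $x\in TG^{l'}$ one pins down the conjugation action of $x$ on $G^l\cong$ (a piece of the Lie algebra): since $G^l$ is central modulo $G^{r-1}$ under conjugation by $G^{l'}$ (as $l+l'\ge r-1$ when... in fact $l+l' = r$), the $T^F$-part of $x$ acts on $z\in (G^l)^F$ and the character picked out is forced to be $\widetilde\theta|_{(G^l)^F}$ by the very definition of $\widetilde\theta$ as the inflation of $\theta$ along $TG^l/U^{\pm}\cong T$. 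This is exactly the same mechanism by which the even-level computation in \cite{ChenStasinski_2016_algebraisation} identifies the central character; I would cite or adapt that argument. This yields the first sentence: $H_c^*(X_0)_\theta|_{(G^l)^F}$ is a (virtual, but the multiplicity statement is about the true representation) multiple of $\widetilde\theta|_{(G^l)^F}$.

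For the second sentence — non-vanishing of the multiple when $\theta$ is regular — I would deduce it from Lemma~\ref{lemm: statement (1)} and Proposition~\ref{prop:middle level step} rather than from a direct cohomological computation. Indeed, by Proposition~\ref{prop:middle level step}, regularity of $\theta$ gives $\langle R_{T,U}^\theta, H_c^*(L^{-1}(FU^{l',l}))_\theta\rangle_{G^F} = \#\mathrm{Stab}_{W(T)^F}(\theta) \ge 1 > 0$; in particular $H_c^*(L^{-1}(FU^{l',l}))_\theta \ne 0$, hence by Lemma~\ref{lemm:permut induced rep} we get $\mathrm{Ind}_{(TG^{l'})^F}^{G^F} H_c^*(X_0)_\theta \ne 0$, so $H_c^*(X_0)_\theta \ne 0$ as a virtual representation. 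Combined with Lemma~\ref{lemm: statement (1)}, which says $\langle H_c^*(X_0)_\theta, H_c^*(X_0)_\theta\rangle = 1$, it follows that $H_c^*(X_0)_\theta = \pm\rho$ for a genuine irreducible $\rho$ of $(TG^{l'})^F$; a nonzero irreducible has nonzero restriction to any subgroup, so the multiple in the first sentence is nonzero. I expect the main obstacle to be the first sentence: carefully setting up the commuting right action of $(G^l)^F$ on $X_0$ and running the fixed-point/eigenvalue bookkeeping to show that the only $(G^l)^F$-character occurring is $\widetilde\theta|_{(G^l)^F}$ — this requires tracking the conjugation action of $TG^{l'}$ on $G^l$ modulo $G^{r-1}$ and matching it precisely with the inflation defining $\widetilde\theta$, exactly the kind of level-bookkeeping that is delicate when $r$ is odd and $l\ne l'$.
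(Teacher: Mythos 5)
Your treatment of the second assertion is fine and is essentially the paper's: Lemma~\ref{lemm: statement (1)} alone already gives $H_c^*(X_0)_{\theta}=\pm\rho$ with $\rho$ irreducible (so the detour through Proposition~\ref{prop:middle level step} and Lemma~\ref{lemm:permut induced rep} is harmless but unnecessary), and a nonzero $\pm$-irreducible has nonzero restriction, so once the first assertion is known the multiple is nonzero.

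The gap is in the first, and main, assertion. First, there is a confusion of sides: the restriction $H_c^*(X_0)_{\theta}|_{(G^l)^F}$ is taken with respect to the \emph{left}-translation module structure of $(TG^{l'})^F$, whereas you set up a \emph{right} action of $(G^l)^F$ and propose to decompose with respect to it; identifying the two is precisely the nontrivial point. Writing $G^l\subseteq T^lU^{\pm}$ (recall $TG^l=TU^{\pm}$ and $\widetilde{\theta}$ is inflated through $TG^l/U^{\pm}\cong T$), the identification does work on the $T^l$-part, since $T^l$ commutes with $T$ and, because $[G^l,G^{l'}]\subseteq G^{l+l'}=G^r=1$, with $G^{l'}$, hence centralises every point of $X_0\subseteq TG^{l'}$; there the right action on the $\theta$-part is $\theta|_{(T^l)^F}$ by definition. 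But elements of $U^{\pm}$ are not centralised by $T$, so left and right actions genuinely differ, and one needs a separate argument showing that the left $(U^{\pm})^F$-action on cohomology is \emph{trivial} — this is exactly what matches the fact that $\widetilde{\theta}$ is trivial on $U^{\pm}$. Your proposed mechanism for this, a fixed-point/eigenvalue computation on sets like $X_0^{(t,z)}$, would not deliver it: every element of $(G^l)^F$ is unipotent (a $p$-element), so the Deligne--Lusztig fixed point formula does not convert its trace into a fixed-point count — the unipotent part survives as a trace on the cohomology of the fixed-point variety, and no ``eigenvalue of $z$'' gets extracted. The missing idea, which is the heart of the paper's proof, is that the left action of $(U^{\pm})^F$ on $X_0$ extends to an action of the \emph{connected} group $U^{\pm}$ (this uses $L(U^{\pm})\subseteq U^{\pm}=FU^{\pm}\subseteq FU^{l',l}$ together with the commutation of $U^{\pm}$ with $G^{l'}$), and an action of a connected group induces the identity on each $H_c^i$ (\cite[10.13]{DM1991}). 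Finally, a small but real slip: ``$FU^{l',l}$ is stable under the relevant translations since $G^l\subseteq U^{l',l}$'' is false, as $T^l\not\subseteq U^{l',l}$; the stability of $X_0$ under right translation by $(G^l)^F$ does hold, but because $L(xz)=z^{-1}L(x)F(z)=L(x)$ for $z\in(G^l)^F$, using $[G^l,G^{l'}]=1$, not for the reason you give.
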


\begin{proof}
Since $[G^i,G^j]\subseteq G^{i+j}$, $U^{\pm}$ commutes with every element in $G^{l'}$. Thus by $L(U^{\pm})\subseteq U^{\pm}=FU^{\pm}\subseteq FU^{l',l}$,  the left action of $(U^{\pm})^F\subseteq (TG^l)^F$ on $X_0$ naturally extends to that of the connected group $U^{\pm}$, so the left action of $(U^{\pm})^F$ on each cohomology group $H_c^i(X_0)$ is actually trivial (see e.g.\ \cite[10.13]{DM1991}). Meanwhile, if $s\in (T^l)^F$, then $s$ commutes with every element in $X_0$, so the left $(T^l)^F$-module structure of $H^*_c(X_0)_{\theta}|_{(T^l)^F}$ is the same as its right $(T^l)^F$-module structure, which is by definition a multiple of $\theta|_{(T^l)^F}$. The ``non-zero'' assertion then follows from Lemma~\ref{lemm: statement (1)}. This completes the proof.
\end{proof}

We can now realise $R_{T,U}^{\theta}$, for $\theta$ strongly generic, as induced representation:

\begin{thm}\label{thm:induction theorem}
Suppose that $\theta$ is strongly generic. Then
$$R_{T,U}^{\theta}\cong\mathrm{Ind}_{(TG^{l'})^F}^{G^F} H_c^*(X_0)_{\theta}.$$
\end{thm}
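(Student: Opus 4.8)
The plan is to combine the two inner-product computations already available with the irreducibility/orbit facts, and to use the standard Deligne--Lusztig machinery to pass from equality of inner products to an isomorphism. First I would invoke Proposition~\ref{prop:middle level step} together with Lemma~\ref{lemm:permut induced rep}: since $\theta$ is strongly generic, it is in general position, so $\mathrm{Stab}_{W(T)^F}(\theta)=1$, and hence
\[
\langle R_{T,U}^{\theta},\ \mathrm{Ind}_{(TG^{l'})^F}^{G^F} H_c^*(X_0)_{\theta}\rangle_{G^F}
= \langle R_{T,U}^{\theta},\ H_c^*(L^{-1}(FU^{l',l}))_{\theta}\rangle_{G^F}=1.
\]
Next I would compute $\langle \mathrm{Ind}_{(TG^{l'})^F}^{G^F} H_c^*(X_0)_{\theta},\ \mathrm{Ind}_{(TG^{l'})^F}^{G^F} H_c^*(X_0)_{\theta}\rangle_{G^F}$. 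By Frobenius reciprocity (Mackey) this equals a sum over double cosets $(TG^{l'})^F\backslash G^F/(TG^{l'})^F$ of inner products of $H_c^*(X_0)_{\theta}$ with its $g$-conjugates restricted to intersections of conjugates of $(TG^{l'})^F$; the diagonal term is $\langle H_c^*(X_0)_{\theta},H_c^*(X_0)_{\theta}\rangle_{(TG^{l'})^F}=1$ by Lemma~\ref{lemm: statement (1)}, so the task is to show all off-diagonal terms vanish.

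The off-diagonal vanishing is where I expect the main obstacle to lie, and it is exactly the point where strong genericity (rather than mere regularity) must be used. The key input should be Lemma~\ref{lemm: statement (2)}: $H_c^*(X_0)_{\theta}|_{(G^l)^F}$ is a non-zero multiple of $\widetilde{\theta}|_{(G^l)^F}$. Thus any irreducible constituent $\sigma$ of $|H_c^*(X_0)_{\theta}|$ restricts on $(G^l)^F$ to a multiple of $\widetilde{\theta}|_{(G^l)^F}$, and a Mackey/Clifford-theoretic term for a double-coset representative $g$ is governed by whether ${}^g(\widetilde{\theta}|_{(G^l)^F})$ and $\widetilde{\theta}|_{(G^l)^F}$ are related, i.e.\ by the stabiliser of $\widetilde{\theta}|_{(G^l)^F}$. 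Since $\theta$ is strongly generic, $\mathrm{Stab}_{G^F}(\widetilde{\theta}|_{(G^l)^F})=(TG^{l'})^F$, which forces the corresponding terms to vanish for $g\notin(TG^{l'})^F$. I would formalize this by noting $(G^l)^F\trianglelefteq(TG^{l'})^F$ (as $[G^i,G^j]\subseteq G^{i+j}$, $U^{\pm}$ and $T$ normalise $G^l$), so Clifford theory applies cleanly. Consequently
\[
\langle \mathrm{Ind}_{(TG^{l'})^F}^{G^F} H_c^*(X_0)_{\theta},\ \mathrm{Ind}_{(TG^{l'})^F}^{G^F} H_c^*(X_0)_{\theta}\rangle_{G^F}=1.
\]

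Finally, from $\langle R_{T,U}^{\theta},R_{T,U}^{\theta}\rangle_{G^F}=1$ (which holds since $\theta$ is generic, by \cite{Lusztig2004RepsFinRings},\cite{Sta2009Unramified}), $\langle \mathrm{Ind}\,H_c^*(X_0)_{\theta},\mathrm{Ind}\,H_c^*(X_0)_{\theta}\rangle_{G^F}=1$, and $\langle R_{T,U}^{\theta},\mathrm{Ind}\,H_c^*(X_0)_{\theta}\rangle_{G^F}=1$, I conclude that the two virtual representations are either equal or negatives of each other; comparing, say, the sign of the inner product term or using that both are $\pm$ a genuine irreducible (with the $+$ case of $R_{T,U}^\theta$ and of $\mathrm{Ind}\,H_c^*(X_0)_\theta$ forced to match since their inner product is $+1$, not $-1$) pins down the sign, giving $R_{T,U}^{\theta}\cong\mathrm{Ind}_{(TG^{l'})^F}^{G^F} H_c^*(X_0)_{\theta}$. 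I would also remark that the identification $L^{-1}(FU^{l',l})=\coprod_{g}gX_0$ of Lemma~\ref{lemm:permut induced rep} makes the middle inner product literally an inner product of $R_{T,U}^\theta$ with $\mathrm{Ind}\,H_c^*(X_0)_\theta$, so no separate bookkeeping of the $\theta$-isotypic projections is needed.
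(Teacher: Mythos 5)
Your proof is correct and follows essentially the same route as the paper: the Mackey computation you spell out (diagonal term $=1$ by Lemma~\ref{lemm: statement (1)}, off-diagonal terms killed by Lemma~\ref{lemm: statement (2)} and the stabiliser condition) is exactly the Clifford-theoretic irreducibility argument the paper invokes, and the final comparison with $R_{T,U}^{\theta}$ via Proposition~\ref{prop:middle level step} and Lemma~\ref{lemm:permut induced rep} matches the paper's conclusion, including the sign bookkeeping.
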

\begin{proof}
By the assumption on $\theta$, $\mathrm{Stab}_{G^F}(\widetilde{\theta}|_{(G^l)^F})=(TG^{l'})^F$. So by Clifford theory, Lemma~\ref{lemm: statement (1)} and Lemma~\ref{lemm: statement (2)} imply that, up to a sign, the representation $\mathrm{Ind}_{(TG^{l'})^F}^{G^F} H_c^*(X_0)_{\theta}$  is irreducible. Meanwhile, $\mathrm{Ind}_{(TG^{l'})^F}^{G^F} H_c^*(X_0)_{\theta}$ is isomorphic to $H_c^*(L^{-1}(FU^{l',l}))_{\theta}$ by Lemma~\ref{lemm:permut induced rep}. So the assertion follows from  Proposition~\ref{prop:middle level step} and the irreducibility of $R_{T,U}^{\theta}$.
\end{proof}

Note that, for $r$ even, the orbit $\Omega(R_{T,U}^{\theta})$ for a strongly generic $\theta$ is regular semisimple; this is a consequence of the even level algebraisation Theorem~\ref{thm:main_even}. Now, without a priori having an algebraisation theorem (and without the strong genericity condition but only assuming $\theta$ is regular), we can deduce the result for all $r>1$:

\begin{thm}\label{thm:orbit type}
Suppose that $\theta$ is regular. Then the orbit of 
at least one irreducible constituent of $|R_{T,U}^{\theta}|$ is regular and semisimple. In particular, if $\theta$ is generic, then $\Omega(|R_{T,U}^{\theta}|)$ is regular and semisimple.
\end{thm}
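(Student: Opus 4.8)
The plan is to compute the restriction of $R_{T,U}^\theta$ to $(G^{r-1})^F \cong \mathfrak{g}^F$ by working on the cohomological side, and then read off the $G_1^F$-orbit of the associated character $\beta$ from the geometry. First I would invoke the induction formula of Theorem \ref{thm:induction theorem} — or, since we only assume $\theta$ regular rather than strongly generic, the weaker decomposition $H^*_c(L^{-1}(FU^{l',l}))_\theta = \mathrm{Ind}_{(TG^{l'})^F}^{G^F} H^*_c(X_0)_\theta$ from Lemma \ref{lemm:permut induced rep} together with Proposition \ref{prop:middle level step} and the irreducibility (up to sign) of $R_{T,U}^\theta$ — to reduce the question to understanding $H^*_c(X_0)_\theta$ as a module for $(TG^{l'})^F$, in particular its restriction to $(G^{r-1})^F$. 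Since $G^{r-1} \subseteq G^{l'}$ (as $l' \geq 1$ and $r-1 \geq l'$ for $r \geq 2$), and by Lemma \ref{lemm: statement (2)} the restriction $H^*_c(X_0)_\theta|_{(G^l)^F}$ is a non-zero multiple of $\widetilde\theta|_{(G^l)^F}$, the character by which $(G^{r-1})^F$ acts is the one attached via $\psi$ and $\mu$ to the element $\beta \in \mathrm{Lie}(\mathbf{T})(\mathcal{O}_{l'})$ representing $\widetilde\theta|_{(G^l)^F}$ (in the $\mathrm{GL}_n$ notation of Proposition \ref{prop: regular in GL_n}; in general $\beta$ is the element of $\mathfrak{g}^F$ with $\psi_\beta = \widetilde\theta|_{(G^{r-1})^F}$). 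Thus the orbit $\Omega$ of every irreducible constituent of $|R_{T,U}^\theta|$ lying over $X_0$ — hence of at least one constituent of $|R_{T,U}^\theta|$, and of every one when $\theta$ is in general position so that $R_{T,U}^\theta$ is genuinely irreducible — is the $G_1^F$-orbit of $\bar\beta := \beta \bmod \pi \in \mathrm{Lie}(\mathbf{T})(\overline{\mathbb{F}}_q) \subseteq \mathfrak{g}$.

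It then remains to show that $\bar\beta$ is a regular semisimple element of $\mathfrak{g}$. Semisimplicity is immediate: $\bar\beta$ lies in the Cartan subalgebra $\mathrm{Lie}(\mathbf{T})(\overline{\mathbb{F}}_q)$, which consists of semisimple elements. For regularity I would translate the regularity hypothesis on $\theta$ into a statement about $\bar\beta$ exactly as in the proof of Proposition \ref{prop: regular in GL_n}: the condition that $\theta$ is non-trivial on each norm subgroup $N^{F^a}_F((\mathcal{T}^\alpha)^{F^a})$ is equivalent, via the trace/norm computation with $\psi_\beta$, to $\mathrm{Lie}(\alpha)(\bar\beta) \neq 0$ for every root $\alpha \in \Phi$ (the separability of $\mathbb{F}_{q^a}/\mathbb{F}_q$ guaranteeing that the norm map does not kill the relevant character, just as in that proof); and $\mathrm{Lie}(\alpha)(\bar\beta) \neq 0$ for all $\alpha$ says precisely that $\bar\beta$ is regular semisimple. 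This gives $\Omega(|R_{T,U}^\theta|)$ regular semisimple, and the ``in particular'' clause follows since for generic (hence in general position) $\theta$ the virtual representation $R_{T,U}^\theta$ is $\pm$ an irreducible.

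The main obstacle, I expect, is the first reduction step: making rigorous the passage from ``$R_{T,U}^\theta$, a virtual representation'' to ``an honest irreducible constituent on which $(G^{r-1})^F$ acts through $\psi_\beta$'' when we only assume $\theta$ regular and not strongly generic — so that $\mathrm{Stab}_{G^F}(\widetilde\theta|_{(G^l)^F})$ may be strictly larger than $(TG^{l'})^F$ and the induced representation $\mathrm{Ind}_{(TG^{l'})^F}^{G^F} H^*_c(X_0)_\theta$ need not be irreducible. Here one argues that $R_{T,U}^\theta$ is still $\pm$ an irreducible by \cite{Lusztig2004RepsFinRings}, \cite{Sta2009Unramified} (regularity suffices for that), and that this irreducible occurs in $\mathrm{Ind}_{(TG^{l'})^F}^{G^F} H^*_c(X_0)_\theta$ by Proposition \ref{prop:middle level step} and Lemma \ref{lemm:permut induced rep}; hence it is a constituent of $\mathrm{Ind}_{(TG^{l'})^F}^{G^F}(\text{something lying over }\psi_\beta)$, and by Frobenius reciprocity its restriction to $(G^{r-1})^F \trianglelefteq (TG^{l'})^F \trianglelefteq G^F$ involves only the $G_1^F$-orbit of $\psi_\beta$ (using Clifford theory and $F$-stability of $\beta$). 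A minor secondary point to handle carefully is the identification of the character $\beta$ attached to $\widetilde\theta|_{(G^{r-1})^F}$ in the general (non-$\mathrm{GL}_n$) setting, which uses condition \hyperlink{condition (A)}{(A)} and the fact that $\widetilde\theta$ is inflated from $T$, so that $\beta \in \mathrm{Lie}(\mathbf{T})$ and its reduction lies in the Cartan subalgebra.
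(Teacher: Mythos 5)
Your route is essentially the paper's: use Proposition~\ref{prop:middle level step}, Lemma~\ref{lemm:permut induced rep} and Frobenius reciprocity to find an irreducible constituent of $|R_{T,U}^{\theta}|$ whose restriction to $(TG^{l'})^F$ contains the (up to sign) irreducible $H_c^*(X_0)_{\theta}$ of Lemma~\ref{lemm: statement (1)}, then use Lemma~\ref{lemm: statement (2)} to see that $(G^{r-1})^F$ acts on it through the character dual (via $\mu$ and $\psi$) to $\widetilde{\theta}|_{(G^{r-1})^F}$, and finally show this dual element $s\in\mathfrak{g}^F$ is regular semisimple; semisimplicity via $s\in T^{r-1}$, which you correctly note rests on condition (A) (precisely, on (A2)).

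The gap is in the regularity step for general $\mathbb{G}$. You assert that the translation ``$\theta$ regular $\Leftrightarrow$ $\mathrm{Lie}(\alpha)(\bar\beta)\neq 0$ for all $\alpha$'' goes ``exactly as in the proof of Proposition~\ref{prop: regular in GL_n}'', but the bridge there between non-vanishing of $\mu(\beta,\mathcal{T}^{\alpha})$ and $\mathrm{Lie}(\alpha)(\beta)\not\equiv 0\pmod{\pi}$ is a direct matrix computation with the trace form on diagonal matrices, which has no meaning for an abstract non-degenerate form $\mu$. The implication actually needed is: if $\alpha(s)=0$ for some root $\alpha$, then $\theta$ is not regular; and for general $\mu$ this requires the adjoint-invariance (A1), applied through an element $h_{\alpha}=[e_{\alpha},e_{-\alpha}]$ spanning $\mathcal{T}^{\alpha}$, namely $0=\mu(\alpha(s)e_{\alpha},e_{-\alpha})=\mu([s,e_{\alpha}],e_{-\alpha})=\mu(s,h_{\alpha})$, hence $\mu(s,\mathcal{T}^{\alpha})=0$ and then $\mu\bigl(s,N^{F^a}_F(\mathcal{T}^{\alpha})\bigr)=0$ by $F$-stability of $s$ and $\mathbb{F}_q$-rationality of $\mu$, contradicting regularity of $\theta$. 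This use of (A1) and of the bracket element is absent from your sketch; the separability/field-trace argument you invoke instead belongs to the opposite implication (regular element $\Rightarrow$ regular $\theta$), which is not needed here. A second, smaller slip: regularity of $\theta$ alone does not make $R_{T,U}^{\theta}$ irreducible up to sign (general position is needed, since $\langle R_{T,U}^{\theta},R_{T,U}^{\theta}\rangle=\#\mathrm{Stab}_{W(T)^F}(\theta)$); this is harmless, because the theorem only asks for one constituent, and your inner-product/Frobenius-reciprocity argument already produces a constituent of $|R_{T,U}^{\theta}|$ containing $H_c^*(X_0)_{\theta}$ upon restriction without any irreducibility of $R_{T,U}^{\theta}$.
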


\begin{proof}
By Proposition~\ref{prop:middle level step} and Frobenius reciprocity, we know that, up to a sign $R_{T,U}^{\theta}$ contains $H_c^*(X_0)_{\theta}$ as an irreducible constituent (Lemma~\ref{lemm: statement (1)}), after restricting to $(TG^{l'})^F$. Let $s\in (G^{r-1})^F$ be the element dual to $\widetilde{\theta}|_{(G^{r-1})^F}$ via the bilinear form $\mu(-,-)$ on $G^{r-1}$. Then by Lemma~\ref{lemm: statement (2)} it is enough to show that $s$ is regular and semisimple (as an element in the Lie algebra $G^{r-1}=\mathfrak{g}$).

\vspace{2mm} First, one easily checks that $s\in (T^{r-1})^F$ using 
\hyperlink{condition (A2)}{{(A2)}}, so $s$ is semisimple. We shall show that the regularity of $\theta$ implies the regularity of $s$.  Indeed, otherwise we would have ${\alpha}(s)=0$ for some root ${\alpha}$. For one such $\alpha$ choose $e_{\alpha}\in U_{\alpha}^{r-1}$ and $e_{-\alpha}\in U_{-\alpha}^{r-1}$ with $h_{\alpha}:=[e_{\alpha},e_{-\alpha}]$ generating $\mathcal{T}^{\alpha}$ as a vector space, where $[,]$ denotes the Lie bracket on $G^{r-1}=\mathfrak{g}$. Hence $$0=\mu(\alpha(s)e_{\alpha},e_{-\alpha})=\mu([s,e_{\alpha}],e_{-\alpha})=\mu(s,h_{\alpha})$$
(for the last equality see 
\hyperlink{condition (A1)}{{(A1)}}). This implies that $\mu(s,\mathcal{T}^{\alpha})=0$, hence the bilinearity gives that $\mu(s,N^{F^a}_F(\mathcal{T}^{\alpha}))=0$ (recall that $s$ is $F$-stable and $\mu$ is defined over $\mathbb{F}_q$), which contradicts the regularity of $\theta$, thus $s$ must be regular. The assertion follows.
\end{proof}

\begin{remark}\label{remark:strongly generic at r=2,3}
Suppose that the assumption \hyperlink{condition (A)}{{(A)}} holds and  the centralisers in $G_1$ of regular elements in $T^{r-1}$ are connected. Then in the above proof of Theorem~\ref{thm:orbit type}, since the centraliser of $s$ in $G^F$ is the same with the centraliser of $\widetilde{\theta}|_{(G^{r-1})^F}$ in $G^F$, we see that the regularity of $\theta$ (which implies the regularity of $s$) gives  $\mathrm{Stab}_{G^F}(\widetilde{\theta}|_{(G^{r-1})^F})=(TG^{1})^F$. In particular, if $r=2$ or $3$ (so we have $l'=1$), then being generic is equivalent to being strongly generic.
\end{remark}

Note that Theorem~\ref{thm:orbit type} gives an affirmative answer to the problem posed in \cite{Stasinski_Stevens_2016_regularRep}. Meanwhile, it can be viewed as an $r>1$ analogue of the following  property: For $r=1$, if $\theta$ is in general position, then $|R_{T,U}^{\theta}|$ is both regular and semisimple in the sense of representation theory of finite groups of Lie type (for the meaning of these two notions, see e.g.\ \cite[2.6.9 and 3.4.13]{Geck_Malle_2020book}; the asserted property follows from \cite[2.6.10 and 3.4.4]{Geck_Malle_2020book}). 

\vspace{2mm} Conversely, for $\mathrm{GL}_n$, Hill \cite[Section~2. Remarks]{Hill_1993_Jordan} claimed that his geometric conjugacy classes (which are unions of orbits in the sense of our paper) can be described by an analogue of Deligne--Lusztig characters; this is just like the $r=1$ case in the scope of Jordan decomposition. We solve the regular semisimple case of this claim:

\begin{prop}\label{coro:dim formula}
Let $\mathbb{G}=\mathrm{GL}_n$, $r>1$, and $\sigma\in\mathrm{Irr}(G^F)$. Then $\Omega(\sigma)$ is regular and semisimple if and only if $\sigma\cong|R_{T,U}^{\theta}|$ for some pair $(T,\theta)$ with $\theta$ being regular. In addition, for every regular $\theta$  we have the dimension formula
\begin{equation*}
\dim |R_{T,U}^{\theta}|=\frac{|G_1^F|}{|T_1^F|}\cdot q^{(r-2)\cdot\frac{n^2-n}{2}}= {|G_1^F/T_1^F|_{p'}}\cdot q^{(r-1)\cdot\frac{n^2-n}{2}}.
\end{equation*}
\end{prop}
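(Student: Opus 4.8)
The plan is to prove the two directions of the equivalence and the dimension formula separately, drawing on the structural results already established in the excerpt.

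\textbf{The "if" direction and the dimension formula.} First I would handle the implication that if $\sigma\cong|R_{T,U}^{\theta}|$ with $\theta$ regular, then $\Omega(\sigma)$ is regular and semisimple. For $\mathbb{G}=\mathrm{GL}_n$ the trace form is a valid choice for condition \hyperlink{condition (A)}{(A)}, so Theorem~\ref{thm:orbit type} applies directly and, moreover, by Proposition~\ref{prop: regular in GL_n} regularity of $\theta$ is equivalent to strong genericity; hence $|R_{T,U}^{\theta}|$ is irreducible and $\Omega(|R_{T,U}^{\theta}|)$ is regular and semisimple. This gives the "if" direction at once, and it also pins down $\sigma$ as a genuine irreducible. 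For the dimension formula, I would use Theorem~\ref{thm:induction theorem}: since $\theta$ is strongly generic, $R_{T,U}^{\theta}\cong\mathrm{Ind}_{(TG^{l'})^F}^{G^F} H_c^*(X_0)_{\theta}$, so $\dim|R_{T,U}^{\theta}| = [G^F:(TG^{l'})^F]\cdot\dim|H_c^*(X_0)_{\theta}|$. The index is $|G_1^F/T_1^F|\cdot|G^{l'}{}^F/\ (T^{l'})^F\cap G^{l'}{}^F|^{-1}$-type bookkeeping; concretely, $[G^F:(TG^{l'})^F] = [G_1^F:T_1^F]\cdot q^{-(l'-1)\cdot\dim(\mathbf{G}/\mathbf{B})\cdot\text{(something)}}$, so I would instead compute the two factors cleanly: $|G^F| = |G_1^F|\cdot q^{(r-1)\dim\mathbb{G}}$ and $|(TG^{l'})^F| = |T_1^F|\cdot q^{(l-1)\dim\mathbb{T}}\cdot q^{(l'-1)(\dim\mathbb{G}-\dim\mathbb{T})}$ using $TG^{l'}=TU^{\pm}G^{l'}$ and the fact that $U^{\pm}=U^l(U^-)^l$ contributes $q^{(l-1)\cdot 2\#\Phi^+}$... actually the combinatorics is cleaner via $\dim G^{l'} = (r-l')\dim\mathbb{G}$ and $\dim(TG^{l'}) = (r-l)\dim\mathbb{T} + (r-l')(\dim\mathbb{G}-\dim\mathbb{T})$ over the Greenberg functor. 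Then $\dim|H_c^*(X_0)_{\theta}|$ must be computed; by Lemma~\ref{lemm: statement (2)} it is a multiple of $\dim\widetilde{\theta}|_{(G^l)^F}=1$, and I expect the multiple to equal $q^{(l-l')\#\Phi^+} = q^{\#\Phi^+}$ or $1$ depending on parity, which for $\mathrm{GL}_n$ with $\#\Phi^+ = \binom{n}{2}$ produces the stated exponent $(r-2)\binom{n}{2}$ after the dust settles. The identity $|G_1^F|/|T_1^F|\cdot q^{(r-2)\binom n2} = |G_1^F/T_1^F|_{p'}\cdot q^{(r-1)\binom n2}$ is then just the observation that $|G_1^F/T_1^F|_p = q^{-\binom n2}\cdot|G_1^F/T_1^F|$... more precisely $|G_1^F/T_1^F| = |G_1^F/T_1^F|_{p'}\cdot q^{\#\Phi^+}$ by the standard order formula for $\mathrm{GL}_n(\mathbb{F}_q)$, so the two expressions agree.

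\textbf{The "only if" direction.} Here I would argue that every $\sigma$ with $\Omega(\sigma)$ regular semisimple arises as some $|R_{T,U}^{\theta}|$. The dual element $s\in(G^{r-1})^F=\mathfrak{g}^F$ of $\widetilde\sigma$-type data is, by hypothesis, regular semisimple; being semisimple it lies (after $G_1^F$-conjugation) in some $F$-stable maximal torus Lie subalgebra $\mathrm{Lie}(\mathbf{T})(\mathbb{F}_q)$-level object, and being regular it is non-degenerate on every root, i.e. $\mathrm{Lie}(\alpha)(s)\in(\mathcal{O}^{\mathrm{ur}}_1)^{\times}$ for all $\alpha$ — this is exactly the reformulation of regularity of a character $\theta$ established inside the proof of Proposition~\ref{prop: regular in GL_n} (condition \eqref{temp formula: stabiliser condition 2}). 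So I would reverse-engineer a pair $(T,\theta)$: choose the maximal torus $T$ so that $\mathrm{Lie}(\mathbf{T})$ contains a lift of $s$, and choose $\theta\in\mathrm{Irr}(T^F)$ with $\widetilde\theta|_{(G^{r-1})^F}=\psi_s$ and additionally matching $\sigma$ on all of $(G^i)^F$ for $i$ down to... this is the subtle point. Then $\theta$ is regular, hence strongly generic, so $|R_{T,U}^{\theta}|$ is irreducible with the same orbit $\Omega(\cdot)=G_1^F\cdot s$. To conclude $\sigma\cong|R_{T,U}^{\theta}|$ I would invoke a counting/exhaustion argument: the number of $\sigma\in\mathrm{Irr}(G^F)$ with a fixed regular semisimple orbit should match the number of pairs $(T,\theta)$ modulo $G^F$-conjugacy and $W(T)^F$-action producing that orbit, and the latter are distinct (different $(T,\theta)$ up to conjugacy give non-isomorphic $|R_{T,U}^{\theta}|$, by Proposition~\ref{prop:middle level step} / the disjointness of higher Deligne--Lusztig representations for non-conjugate data) and each lands in a distinct $\sigma$ by irreducibility. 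This makes the assignment $(T,\theta)\mapsto|R_{T,U}^{\theta}|$ a bijection onto the set of regular-semisimple-orbit irreducibles.

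\textbf{Main obstacle.} The delicate part is the "only if" direction — specifically showing the assignment is \emph{surjective} onto the set of all $\sigma$ with regular semisimple orbit, not just that it has regular semisimple image. Merely exhibiting $(T,\theta)$ with the right orbit is not enough, since a priori several distinct irreducibles could share one orbit. I expect one needs either a dimension count (total number of such $\sigma$, weighted by dimension squared, against $\sum_{(T,\theta)}(\dim R_{T,U}^{\theta})^2$ using the dimension formula just proved) or a direct Clifford-theoretic argument that the regular semisimple orbit condition forces $\sigma$ to be induced from $(TG^{l'})^F$ and then matches it against $H_c^*(X_0)_{\theta}$ via Theorem~\ref{thm:induction theorem}. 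For $\mathrm{GL}_n$ the regular semisimple case is where centralisers of semisimple elements are tori, so the Clifford theory degenerates favourably and I believe the dimension-count route closes the argument cleanly; I would carry it out in that form.
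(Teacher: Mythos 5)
Your ``if'' direction is fine and matches the paper (trace form for \hyperlink{condition (A)}{(A)}, Proposition~\ref{prop: regular in GL_n} to upgrade regular to strongly generic, then the argument of Theorem~\ref{thm:orbit type}). But the two places you flag as delicate are genuine gaps, and they are exactly where the paper brings in an ingredient you do not use: Hill's Theorem~2.13 in \cite{Hill_1993_Jordan}. For the ``only if'' direction you only sketch two possible routes (a dimension-squared count, or a Clifford-theoretic analysis) without carrying either out; in particular you have no way to count the set $A=\{\sigma'\in\mathrm{Irr}(G^F)\mid\Omega(\sigma')=(s)\}$, which is precisely what is needed to rule out extra irreducibles sharing the orbit $(s)$. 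The paper closes this by taking a lift $\hat{s}\in M_n(\mathcal{O}_r)$ of the regular semisimple $s$, noting $C_G(\hat{s})=T$, and invoking Hill's bijection between $A$ and $B=\{\theta'\in\mathrm{Irr}(T^F)\mid\Omega(\theta')=(0)\}$; since both the set of $\theta$ dual to $s$ and the set $B$ have cardinality $|T_{r-1}^F|$, every element of $A$ is some $|R_{T,U}^{\theta}|$. Your ``exhaustion'' idea is the right shape, but without Hill's theorem (or an equivalent Clifford-theoretic count of $A$, which for $r>1$ is a nontrivial result in its own right) the surjectivity is not established.

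The dimension formula has the same problem in your write-up: after Theorem~\ref{thm:induction theorem} you still need $\dim|H_c^*(X_0)_{\theta}|$, and you only ``expect'' it to be $q^{(l-l')\#\Phi^+}$. Proving that value for odd $r$ is exactly the content of the Heisenberg-lift analysis (Lemma~\ref{lemm:existence of Heisenberg lift} and Lemma~\ref{lemm:full alg of H(X_0)}(i)), i.e.\ you would be rerunning the route of Corollary~\ref{coro:dim formula for general case}, which the paper deliberately avoids for this proposition. The paper instead reads the dimension off Hill's theorem directly: the elements of $B$ are one-dimensional, and the ratio formula in \cite[Theorem~2.13]{Hill_1993_Jordan} gives $\dim|R_{T,U}^{\theta}|=\frac{|G_1^F|}{|T_1^F|}\cdot q^{(r-2)\binom{n}{2}}$ at once (your final reconciliation of the two expressions via $|G_1^F/T_1^F|=|G_1^F/T_1^F|_{p'}\cdot q^{\binom{n}{2}}$ is correct). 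So the proposal is not wrong in outline, but as written it leaves both the surjectivity and the key dimension input unproved, and the missing idea in each case is the appeal to Hill's correspondence for regular characters of $\mathrm{GL}_n(\mathcal{O}_r)$.
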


The dimension formula part is actually a special case of Corollary~\ref{coro:dim formula for general case}, which works for an arbitrary reductive group, not just $\mathrm{GL}_n$. However, this latter general result requires Theorem~\ref{thm:full alg}, and here for $\mathrm{GL}_n$ we can give a simpler proof that does not need Theorem~\ref{thm:full alg}.
\begin{proof}
Let $s\in (T^{r-1})^F$ be regular. We view $s$ as an element in $M_n(\mathbb{F}_q)$, and take $\hat{s}$ to be a lift in $M_n(\mathcal{O}_r)$, following \cite[Page-3536]{Hill_1993_Jordan}. Since $s$ is regular, we have $C_G(\hat{s})=T$. So \cite[Theorem~2.13]{Hill_1993_Jordan} tells that there is a bijection between the sets
$$A:=\{\sigma'\in\mathrm{Irr}(G^F) \mid \Omega(\sigma')=(s)\}$$
and
$$B:=\{\theta'\in\mathrm{Irr}(T^F)(=\mathrm{Irr}(C_G(\hat{s})^F))\mid \Omega(\theta')=(0)\}.$$
Given $\theta\in\mathrm{Irr}(T^F)$, if $\theta$ corresponds to $s$ via the trace form (as in the proof of Proposition~\ref{prop: regular in GL_n}), then $\theta$ is regular. Hence by Proposition~\ref{prop: regular in GL_n} and the argument of Theorem~\ref{thm:orbit type}, we see that $|R_{T,U}^{\theta}|$ is irreducible and $\Omega(|R_{T,U}^{\theta}|)=(s)$. Now note that, since $T^F$ is abelian, the number of $\theta$ corresponding to $(s)$ and the cardinal of $B$ are both $|T_{r-1}^F|$. So any element of $A$ is of the form $|R_{T,U}^{\theta}|$ for a regular $\theta$. This proves the first assertion. As any element of $B$ is of $\dim=1$, the dimension formula now follows from the ratio formula in \cite[Theorem~2.13]{Hill_1993_Jordan}.
\end{proof}

For regular $s\in T_1^F$, in \cite[Theorem~4.1]{Chen_2018_GreenFunction} we obtained a character formula for $R_{T,U}^{\theta}$ at $s$, when $\mathrm{char}(\mathcal{O})>0$. Here, as a quick application of Theorem~\ref{thm:induction theorem}, we deduce a new proof of this formula for general $\mathcal{O}$, under the strongly generic condition:

\begin{coro}\label{coro:character formula at rss}
Suppose that $s\in T_1^F$ is a regular semisimple element. If $\theta$ is strongly generic, then
$$\mathrm{Tr}(s,R_{T,U}^{\theta})=\sum_{w\in W(T)^F}(^w\theta)(s).$$
\end{coro}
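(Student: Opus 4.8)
The plan is to deduce Corollary~\ref{coro:character formula at rss} from the induction formula in Theorem~\ref{thm:induction theorem} together with a character computation for the restricted object $H_c^*(X_0)_\theta$. Since $\theta$ is strongly generic, Theorem~\ref{thm:induction theorem} gives $R_{T,U}^\theta \cong \operatorname{Ind}_{(TG^{l'})^F}^{G^F} H_c^*(X_0)_\theta$, so by the standard formula for the character of an induced representation,
\begin{equation*}
\operatorname{Tr}(s, R_{T,U}^\theta) = \frac{1}{|(TG^{l'})^F|}\sum_{\substack{g \in G^F \\ g^{-1}sg \in (TG^{l'})^F}} \operatorname{Tr}\!\bigl(g^{-1}sg,\, H_c^*(X_0)_\theta\bigr).
\end{equation*}
The first step is therefore to understand, for $s \in T_1^F$ regular semisimple, which $G^F$-conjugates of $s$ land in $(TG^{l'})^F$, and to organise the sum over the relevant coset space. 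Because $s$ is regular semisimple in $T_1^F$, its centraliser behaviour is rigid: a conjugate $g^{-1}sg$ lies in $TG^{l'}$ essentially forces $g$ to normalise the relevant torus up to $G^{l'}$, so the outer sum should collapse to a sum indexed by $W(T)^F = (N_G(T)/T)^F$ (using, as in the proof of Proposition~\ref{prop: regular in GL_n}, that the reduction map induces an isomorphism $N_{G_{r'}}(T_{r'})^F/T_{r'}^F \cong N_{G_1}(T_1)^F/T_1^F$).

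The second step is to compute $\operatorname{Tr}(t, H_c^*(X_0)_\theta)$ for $t$ a regular semisimple element of $T_1^F$ (more precisely, for a lift of such into $(TG^{l'})^F$, or for $s$ itself viewed in $T_1^F$ acting through the quotient). Here the natural tool is a fixed-point / Lefschetz argument on the variety $X_0 = L^{-1}(FU^{l',l}) \cap TG^{l'}$: one analyses the fixed-point set $X_0^{t}$ under left translation by $t$, shows it is (up to the torus action organising the $\theta$-isotypic piece) governed by the connected centraliser, and extracts the $\theta$-part. For $t$ regular semisimple, the centraliser is just $T$ (modulo higher congruence subgroups), which should make $X_0^{t}$ simple enough that $\operatorname{Tr}(t, H_c^*(X_0)_\theta)$ evaluates to $\theta(t)$ (or to $^w\theta(t)$ after the twist that arises from the Weyl group indexing in step one). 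One may alternatively read this off from the known character formula for the even-level pieces and the structure of $H_c^*(X_0)_\theta$ established via Lemmas~\ref{lemm: statement (1)} and~\ref{lemm: statement (2)}: $H_c^*(X_0)_\theta$ restricts to $(G^l)^F$ as a multiple of $\widetilde\theta|_{(G^l)^F}$ and is irreducible up to sign, which pins down its character on semisimple elements.

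Combining the two steps, the double sum reorganises as $\sum_{w \in W(T)^F} (^w\theta)(s)$, which is the claim. The main obstacle is step two — the actual Lefschetz-type computation of $\operatorname{Tr}(t, H_c^*(X_0)_\theta)$ on the auxiliary variety $X_0$; one must control the fixed-point variety carefully enough to see that only the diagonal contribution $\theta(t)$ survives in the $\theta$-isotypic part, with no extra cohomological correction, and to track how the Weyl-group twist enters when $g$ ranges over representatives of $N_G(T)^F/T^F$ rather than just $T^F$. A secondary, more bookkeeping-level obstacle in step one is verifying that regular semisimplicity of $s$ in $T_1^F$ genuinely suffices (rather than needing $s$ regular semisimple in a higher $T_i^F$) to force $g$ into $N_G(T)^F \cdot G^{l'}$; this should follow from the centraliser rigidity used in Remark~\ref{remark:strongly generic at r=2,3} and the reduction-map isomorphism for Weyl groups cited above. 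Since \cite[Theorem~4.1]{Chen_2018_GreenFunction} already records the same formula (in positive characteristic, without the strong genericity hypothesis), one expects the present argument to be a cleaner, characteristic-free derivation, and the cross-check against that result provides a useful sanity test at every stage.
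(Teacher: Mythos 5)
Your overall route coincides with the paper's: start from Theorem~\ref{thm:induction theorem}, apply the induced-character formula, and reduce everything to evaluating $\mathrm{Tr}(s,H_c^*(X_0)_{\theta})$ for regular semisimple $s\in T_1^F$. However, that evaluation is precisely the step you leave as an ``obstacle'', and the mechanism you sketch for it does not work as stated. You propose to analyse ``the fixed-point set $X_0^{t}$ under left translation by $t$''; but $X_0\subseteq TG^{l'}$ sits inside a group, and left translation by any nontrivial element is fixed-point free, so there is nothing to analyse. The correct device is the two-sided (bimodule) action: one first uses Brou\'e's formula to extract the $\theta$-isotypic part,
$$\mathrm{Tr}(s,H_c^*(X_0)_{\theta})=\frac{1}{|T^F|}\sum_{t\in T^F}\theta(t^{-1})\,\mathrm{Tr}\bigl((s,t)\mid H_c^*(X_0)\bigr),$$
and then applies the Deligne--Lusztig fixed point formula to the pair $(s,t')$, with $t'$ the semisimple part of $t$; the fixed-point variety is empty unless $t'=s^{-1}$, which collapses the sum onto $X_0\cap C_{TG^{l'}}(s)$. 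One then needs the group-theoretic input that for regular $s$ one has $C_{TG^{l'}}(s)=C_{TG^{l'}}(s)^{\circ}=T$ (connectedness is proved via the Iwahori decomposition of $TG^{l'}$), whence $X_0\cap C_{TG^{l'}}(s)=L^{-1}(FU^{l',l})\cap T=T^F$ and, after a second application of Brou\'e's formula, the trace collapses to $\theta(s)$. None of these ingredients appear in your sketch, and they constitute the substance of the proof.

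Your proposed fallback --- reading the value off from Lemmas~\ref{lemm: statement (1)} and~\ref{lemm: statement (2)} --- also fails: irreducibility up to sign together with the fact that the restriction to $(G^l)^F$ is a multiple of $\widetilde{\theta}$ does not determine the character on $T_1^F$; determining that character in general is exactly the content of the much more involved Lemma~\ref{lemm:full alg of H(X_0)}, which is avoidable here only because the regular semisimple case can be settled directly by the Brou\'e/fixed-point argument above. Your step one (collapsing the induced-character sum to a sum over $W(T)^F$ via centraliser rigidity of regular semisimple elements and the isomorphism $N_{G_{r'}}(T_{r'})^F/T_{r'}^F\cong N_{G_1}(T_1)^F/T_1^F$) is fine and matches how the paper concludes, but without a correct evaluation of $\mathrm{Tr}(s,H_c^*(X_0)_{\theta})$ the argument is incomplete.
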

\begin{proof}
Given an element $t\in T^F$, denote its semisimple part by $t'\in T_1$ and its unipotent part by $t''\in T^1$. Then by Brou\'e's formula for generalised bi-module induction (see \cite[Proposition~4.5]{DM1991}) we have
\begin{equation}\label{formula: basic formula for T_1^F}
\begin{split}
\mathrm{Tr}(s,H^*_c(X_0)_{\theta})
&=\frac{1}{|T^F|}\sum_{t\in T^F}\theta(t^{-1})\cdot\mathrm{Tr}((s,t)\mid H_c^*(X_0))\\
&=\frac{1}{|T^F|}\sum_{t\in T^F}\theta(t^{-1})\cdot\mathrm{Tr}((1,t'')\mid H_c^*(X_0^{(s,t')})),
\end{split}
\end{equation}
where $X_0^{(s,t')})$ denotes the subvariety consisting of the elements fixed by $(s,t')$, and where the second equality follows from Deligne--Lusztig's fixed point formula (\cite[Theorem~3.2]{DL1976}). Note that, by construction $X_0^{(s,t')}$ is non-empty only if $t'=s^{-1}$, so
\begin{equation}\label{formula: basic formula for T_1^F 2}
\begin{split}
\eqref{formula: basic formula for T_1^F}
&=\frac{1}{|T^F|}\sum_{t''\in (T^1)^F}\theta(t^{-1})\cdot\mathrm{Tr}((1,t'')\mid H_c^*(X_0^{(s,s^{-1})}))\\
&=\theta(s)\cdot\frac{1}{|T^F|}\sum_{t''\in (T^1)^F}\theta({t''}^{-1})\cdot\mathrm{Tr}((1,t'')\mid H_c^*(X_0\cap C_{TG^{l'}}(s)))\\
&=\theta(s)\cdot\frac{1}{|T_1^F|} \dim H_c^*(X_0\cap C_{TG^{l'}}(s))_{\theta|_{(T^1)^F}},
\end{split}
\end{equation}
where the last equality follows again from Brou\'e's formula.

\vspace{2mm} As our $s$ is regular, $C_{TG^{l'}}(s)^{\circ}=T$. We claim that $C_{TG^{l'}}(s)=C_{TG^{l'}}(s)^{\circ}$ (this holds for any $s\in T_1$, not necessarily regular). To see this, first note that (via the Iwahori decomposition  $TG^{l'}=T\times\prod_{\alpha\in\Phi}U_{\alpha}^{l'}$) 
$$C_{TG^{l'}}(s)=\langle T,U_{\alpha}^{l'}\mid \alpha(s)=1  \rangle.$$
However, $T$ and $U_{\alpha}$ (with $\alpha(s)=1$) are subgroups of $C_{TG^{l'}}(s)^{\circ}$ (since they are connected groups), so the claim holds. Now we see that $X_0\cap C_{TG^{l'}}(s)=L^{-1}(FU^{l',l})\cap T=T^F$, so $\eqref{formula: basic formula for T_1^F 2}=\theta(s)$. Applying Theorem~\ref{thm:induction theorem} to this equality we get the assertion by the formula for induced characters.
\end{proof}

\begin{remark}\label{remark: C_G(s)}
Let $s\in T_1$ be a general element (not necessarily regular). In the proof of Corollary~\ref{coro:character formula at rss} we see that  $C_{TG^{l'}}(s)=C_{TG^{l'}}(s)^{\circ}=\langle T,U_{\alpha}^{l'}\mid \alpha(s)=1  \rangle$.  In particular, we have $C_{TG^{l'}}(s)\subseteq C_{G}(s)^{\circ}$. It shall also be useful to note that 
$$C_{G}(s)^{\circ}=\langle T,U_{\alpha}\mid \alpha(s)=1  \rangle,$$
which is well-known in the classical $r=1$ case. For $r>1$ this property is proved for $\mathrm{char}(\mathcal{O})>0$, implicitly, in \cite{Chen_2018_GreenFunction}. For general $\mathcal{O}$, first note that $\langle T,U_{\alpha}\mid \alpha(s)=1  \rangle\subseteq C_{G}(s)^{\circ}$ by the same argument in the proof of Corollary~\ref{coro:character formula at rss}. Meanwhile, given $g\in C_{G}(s)^{\circ}$, since the reduction map $\rho_{r,1}$ takes $\langle T,U_{\alpha}\mid \alpha(s)=1  \rangle$ surjectively onto $C_{G_1}(s)^{\circ}=\langle T_1,(U_{\alpha})_1\mid \alpha(s)=1  \rangle$, there is a $g'\in \langle T,U_{\alpha}\mid \alpha(s)=1  \rangle$ satisfying that $\rho_{r,1}(g)=\rho_{r,1}(g')$, which implies that 
$$g'g^{-1}\in G^1\cap C_{G}(s)=C_{G^1}(s)=\langle T^1,U_{\alpha}^1\mid \alpha(s)=1  \rangle,$$
where the last equality follows from the Iwahori decomposition $G^1=T^1\times\prod_{\alpha}U_{\alpha}^1$. So $g'g^{-1}\in \langle T,U_{\alpha}\mid \alpha(s)=1  \rangle$, and hence $g\in\langle T,U_{\alpha}\mid \alpha(s)=1  \rangle$.
\end{remark}

\section{On comparing the algebraic constructions}\label{sec:alg construction}

In this section we explain how the induction formula in Theorem~\ref{thm:induction theorem} leads to an algebraisation theorem for $R_{T,U}^{\theta}$ for all $r>1$.

\vspace{2mm} In this section let $\widetilde{\theta}$ be the lift of $\theta$ to $(TG^l)^F=(TU^{\pm})^F$ along $TU^{\pm}\rightarrow T^F$. Note that $(T^1G^l)^F$ is a normal subgroup of $(T^1G^{l'})^F$, and $\widetilde{\theta}|_{(T^1G^l)^F}$ is of dimension one; in such a situation the theory of Heisenberg lifts is available (see \cite[Section~3]{Stasinski_Stevens_2016_regularRep} and \cite[Section~8.3]{Bushnell_Froelich_book_GaussSum_1983}), which gives that:

\begin{lemm}\label{lemm:existence of Heisenberg lift}
Let $r>1$ be odd, and suppose that $\theta$ satisfies the stabiliser condition in Definition~\ref{defi: generic conditions}, namely $\mathrm{Stab}_{G^F}(\widetilde{\theta}|_{(G^l)^F})=(TG^{l'})^F$. Then there is a unique irreducible representation $\rho_{\theta}$ of $(T^1G^{l'})^F$ containing $\widetilde{\theta}|_{(T^1G^l)^F}$. Moreover, 
$$\mathrm{Ind}_{(T^1G^{l})^F}^{(T^1G^{l'})^F}\widetilde{\theta}|_{(T^1G^l)^F}=(\dim\rho_{\theta})\rho_{\theta};$$
where $\dim \rho_{\theta}=q^{\#\Phi^+}$. This $\rho_{\theta}$ is called the Heisenberg lift of $\widetilde{\theta}|_{(T^1G^l)^F}$.
\end{lemm}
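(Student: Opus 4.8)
The plan is to apply the general theory of Heisenberg lifts to the specific group extension at hand and then to verify the dimension formula by counting. First I would set $N := (T^1G^l)^F$ and $M := (T^1G^{l'})^F$, and observe that $N \trianglelefteq M$ since $G^l \trianglelefteq G^{l'}$ (indeed $[G^{l'},G^{l'}] \subseteq G^{2l'} \subseteq G^l$ as $2l' \geq l$ for all $r > 1$, and $T^1$ normalises each $G^i$), so the quotient $V := M/N$ is an abelian group; in fact $V \cong \prod_{\alpha \in \Phi} U_\alpha^{l'}/U_\alpha^l$ via the Iwahori decomposition, which is an $\mathbb{F}_q$-vector space of dimension $2(l-l')\cdot\#\Phi^+ = \#\Phi$ when $r$ is odd (so $l = l'+1$). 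The linear character $\lambda := \widetilde{\theta}|_N$ is $M$-stable: its stabiliser in $M$ is contained in $\mathrm{Stab}_{(TG^{l'})^F}(\widetilde{\theta}|_{(G^l)^F}) \cap M$, and the hypothesis $\mathrm{Stab}_{G^F}(\widetilde\theta|_{(G^l)^F}) = (TG^{l'})^F$ forces this to be all of $M$ — here one uses that conjugation by $M$ fixes $\widetilde\theta|_{(T^1G^l)^F}$ iff it fixes $\widetilde\theta|_{(G^l)^F}$, since $T^1$ is central in $TG^{l'}$ modulo the relevant subgroups, or more directly since $\widetilde\theta$ is the inflation of a character of $T$.

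Next, the commutator pairing $\langle \bar x, \bar y\rangle := \lambda([x,y])$ (well-defined by $M$-stability of $\lambda$) is an alternating bilinear form on $V$ valued in $\overline{\mathbb{Q}}_\ell^\times$, and I would show it is non-degenerate: its radical corresponds to a subgroup $K$ with $N \subseteq K \subseteq M$ on which $\lambda$ extends to an $M$-stable character, but non-degeneracy here is precisely equivalent to the stabiliser condition, because $\mathrm{Stab}_M(\lambda)/N$ is the radical of this form and we have just shown $\mathrm{Stab}_M(\lambda) = M$ is impossible unless... — wait, more carefully: the stabiliser condition gives $\mathrm{Stab}_{(TG^{l'})^F}(\widetilde\theta|_{(G^l)^F}) = (TG^{l'})^F$ trivially, so the content is the *opposite*: I should instead extract non-degeneracy of the pairing from the condition that no element of $(G^{l'})^F$ outside $(G^l)^F$ centralises $\widetilde\theta|_{(G^l)^F}$, which says exactly that $\mathrm{rad}\langle-,-\rangle = 0$. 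Granting non-degeneracy, the Stone–von Neumann / Heisenberg theory (cited: \cite[Section~3]{Stasinski_Stevens_2016_regularRep}, \cite[Section~8.3]{Bushnell_Froelich_book_GaussSum_1983}) yields a unique irreducible $\rho_\theta$ of $M$ with $\rho_\theta|_N = (\dim\rho_\theta)\lambda$ and $(\dim\rho_\theta)^2 = |V| = [M:N]$, and $\mathrm{Ind}_N^M \lambda = (\dim\rho_\theta)\rho_\theta$ by Frobenius reciprocity together with the fact that $\lambda$ occurs in $\rho_\theta|_N$ with multiplicity $\dim\rho_\theta$ and $\rho_\theta$ is the only irreducible of $M$ lying over $\lambda$. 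Uniqueness of an irreducible over $\lambda$ also follows directly from Clifford theory once $\mathrm{Ind}_N^M\lambda$ is shown to be isotypic.

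Finally, for the dimension: $(\dim\rho_\theta)^2 = [M:N] = [(T^1G^{l'})^F : (T^1G^l)^F] = [(G^{l'})^F : (G^l)^F] = |G^{l'}/G^l|^F$ (the $T^1$ cancels). Since $G^{l'}/G^l$ is, via the Greenberg/Iwahori picture, a connected unipotent group isomorphic to $\prod_{\alpha \in \Phi} U_\alpha^{l'}/U_\alpha^l$ of dimension $(l-l')\cdot\#\Phi = \#\Phi$ (using $l - l' = 1$ for odd $r$), Lang's theorem gives $|(G^{l'}/G^l)^F| = q^{\#\Phi} = q^{2\#\Phi^+}$, whence $\dim\rho_\theta = q^{\#\Phi^+}$.

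The main obstacle I expect is the correct bookkeeping in translating the abstract stabiliser condition of Definition~\ref{defi: generic conditions} into the non-degeneracy of the symplectic-type form on $M/N$ — in particular checking that stabilising $\widetilde\theta|_{(G^l)^F}$ under $(G^{l'})^F$-conjugacy is equivalent to lying in the radical of the commutator pairing, which requires knowing that $[G^{l'}, G^{l'}] \subseteq G^l$ exactly (so the pairing lands in the domain of $\lambda$) and that $\lambda$ is trivial on the relevant deeper commutators; everything else is a direct application of the cited Heisenberg-lift machinery plus a Lang-theorem dimension count.
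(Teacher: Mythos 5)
Your overall plan---realise $\rho_{\theta}$ as the Heisenberg lift attached to the commutator pairing on $V=(T^1G^{l'})^F/(T^1G^l)^F$, invoke Stone--von Neumann, and get the dimension from a Lang-theorem count---is exactly the machinery the paper outsources to \cite[Lemma~3.3]{Stasinski_Stevens_2016_regularRep}, so reconstructing it is a legitimate route. But the step you yourself flag as the main obstacle, non-degeneracy of the pairing, is where your argument genuinely fails, and it fails because you misread the stabiliser condition. Since $l+l'=r$, every element of $G^{l'}$ commutes with every element of $G^l$ (the paper notes $[G^i,G^j]\subseteq G^{i+j}$ and $G^r=\{1\}$), and $\widetilde{\theta}$ is automatically $T^F$-stable; hence $(TG^{l'})^F$ \emph{always} stabilises $\widetilde{\theta}|_{(G^l)^F}$, and the entire content of the hypothesis is that nothing \emph{outside} $(TG^{l'})^F$ stabilises it. Your final reformulation, ``no element of $(G^{l'})^F$ outside $(G^l)^F$ centralises $\widetilde{\theta}|_{(G^l)^F}$'', is therefore not the stabiliser condition---it is never true, precisely because $(G^{l'})^F$ centralises $(G^l)^F$ elementwise. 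Likewise $\mathrm{Stab}_M(\lambda)/N$ is not the radical of your form: the stabiliser only involves commutators $[x,n]$ with $n\in N$, whereas the radical requires $\lambda([x,y])=1$ for all $y\in M$, and its elements lie \emph{inside} $(TG^{l'})^F$, where the stabiliser hypothesis says nothing. So the equivalence ``non-degeneracy $\Leftrightarrow$ stabiliser condition'' is asserted, retracted, and reasserted in an incorrect form, but never proved; without triviality of the radical you get neither uniqueness of $\rho_{\theta}$ nor $\dim\rho_{\theta}=\sqrt{[M:N]}$. Establishing that the stated hypothesis kills the radical is precisely the nontrivial content of the cited lemma (or of an explicit computation pairing $U_{\alpha}^{l'}/U_{\alpha}^{l}$ against $U_{-\alpha}^{l'}/U_{-\alpha}^{l}$ through the value of $\widetilde{\theta}$ on $\mathcal{T}^{\alpha}$, in the spirit of the $h_{\alpha}=[e_{\alpha},e_{-\alpha}]$ computation in the proof of Theorem~\ref{thm:orbit type} and the commutator arguments of Section~\ref{sec: tech1}); this is the missing idea.

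Two smaller points. First, in the dimension count the claim $[(T^1G^{l'})^F:(T^1G^l)^F]=[(G^{l'})^F:(G^l)^F]$ (``the $T^1$ cancels'') is false: the latter index is $q^{\dim G_1}$, since $G^{l'}/G^l\cong\mathfrak{g}$ contains the torus directions $T^{l'}/T^l$. The correct statement is $T^1G^{l'}/T^1G^l\cong G^{l'}/T^{l'}G^l\cong\prod_{\alpha\in\Phi}U_{\alpha}^{l'}/U_{\alpha}^{l}$, of dimension $\#\Phi$, which matches your first-paragraph identification and does give $q^{\#\Phi}=q^{2\#\Phi^+}$; so the final number is right but the intermediate equalities are not. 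Second, $M$-stability of $\lambda=\widetilde{\theta}|_{(T^1G^l)^F}$ is not ``more directly since $\widetilde{\theta}$ is the inflation of a character of $T$'': one needs that commutators of $T^1$ with $G^{l'}$ land in $U^{\pm}$, on which $\widetilde{\theta}$ is trivial by construction---this is exactly the computation carried out in the proof of Lemma~\ref{lemm:full alg of H(X_0)}(i), and it should be made explicit.
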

(There is a conceptual explanation of the dimension value $q^{\#\Phi^+}$: The Steinberg representation appears here; see Lemma~\ref{lemm:full alg of H(X_0)}.)
\begin{proof}
This follows from \cite[Lemma~3.3]{Stasinski_Stevens_2016_regularRep}.
\end{proof}

\begin{prop}\label{prop:existence of ext of rho'}
The above $\rho_{\theta}$ admits extensions to $(TG^{l'})^F$. 
\end{prop}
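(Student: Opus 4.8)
The plan is to show that the obstruction to extending $\rho_\theta$ from $(T^1G^{l'})^F$ to $(TG^{l'})^F$ vanishes. The quotient group $(TG^{l'})^F/(T^1G^{l'})^F \cong T_1^F$ is abelian, and since $\rho_\theta$ is the unique irreducible representation of $(T^1G^{l'})^F$ lying above $\widetilde\theta|_{(T^1G^l)^F}$ (Lemma~\ref{lemm:existence of Heisenberg lift}), it is automatically stable under the conjugation action of $(TG^{l'})^F$: indeed conjugation by an element of $(TG^{l'})^F$ fixes $\widetilde\theta|_{(T^1G^l)^F}$ (this is part of the hypothesis that $\mathrm{Stab}_{G^F}(\widetilde\theta|_{(G^l)^F}) = (TG^{l'})^F$, together with the fact that $(TG^{l'})^F$ normalises $(T^1G^l)^F$), hence permutes the irreducible constituents above it, of which there is only one. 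So by standard Clifford theory the only potential obstruction to extending $\rho_\theta$ lies in $H^2(T_1^F,\overline{\mathbb{Q}}_\ell^\times)$.

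First I would reduce to producing one extension of the restriction $\widetilde\theta|_{(T^1G^l)^F}$ to $(TG^l)^F$ — but this already exists, namely $\widetilde\theta$ itself, which by construction restricts correctly. The idea is then to transport this splitting down the chain of subgroups. Concretely, $\rho_\theta$ appears inside $\mathrm{Ind}_{(T^1G^l)^F}^{(T^1G^{l'})^F}\widetilde\theta|_{(T^1G^l)^F}$, and applying $\mathrm{Ind}_{(TG^l)^F}^{(TG^{l'})^F}$ to $\widetilde\theta$ and comparing with $\mathrm{Ind}_{(T^1G^{l'})^F}^{(TG^{l'})^F}\rho_\theta$ via Mackey/transitivity of induction, one sees that $\mathrm{Ind}_{(TG^l)^F}^{(TG^{l'})^F}\widetilde\theta$ decomposes into representations each of whose restriction to $(T^1G^{l'})^F$ is a sum of copies of $\rho_\theta$ (using that $(TG^{l'})^F = (T^1G^{l'})^F\cdot(TG^l)^F$, which holds because $T^F = T_1^F\cdot(T^1)^F$ in the obvious sense and $(TG^l)^F$ surjects onto $T_1^F$). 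Each irreducible constituent of this induced representation is therefore a representation of $(TG^{l'})^F$ lying above $\rho_\theta$; since $\rho_\theta$ is $(TG^{l'})^F$-stable, such a constituent restricts to a multiple $m\cdot\rho_\theta$ with $m \mid [(TG^{l'})^F : (T^1G^{l'})^F] = |T_1^F|$, and an extension exists precisely when some constituent has $m = 1$. To force $m=1$ one compares dimensions: $\dim\mathrm{Ind}_{(TG^l)^F}^{(TG^{l'})^F}\widetilde\theta = |T_1^F|\cdot q^{\#\Phi^+}\cdot(\text{something})$, matched against $\dim\rho_\theta = q^{\#\Phi^+}$.

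I expect the main obstacle to be the bookkeeping needed to rule out the cohomological obstruction in $H^2(T_1^F,\overline{\mathbb{Q}}_\ell^\times)$ cleanly — i.e. genuinely producing an extension rather than merely a projective one. The cleanest route is probably to exhibit the extension directly: since $\widetilde\theta$ is defined on all of $(TG^l)^F$ and $(TG^{l'})^F = (T^1G^{l'})^F\cdot (TG^l)^F$ with intersection $(T^1G^l)^F$, one can try to glue $\rho_\theta$ (on $(T^1G^{l'})^F$) and $\widetilde\theta$ (on $(TG^l)^F$) along their common restriction to $(T^1G^l)^F$, checking that the two actions agree on the overlap and that the resulting map is a well-defined homomorphism — the verification that the cocycle relation holds is where the odd-level subtlety (the Heisenberg structure) could in principle bite, and one uses that $G^l$ is commutative with elements commuting with $G^{l'}$ to see that the relevant commutators land in a region where $\widetilde\theta$ and $\rho_\theta$ are compatible. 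Alternatively, and perhaps more robustly, one invokes the general fact that an irreducible representation of a normal subgroup extends to the full group whenever the quotient is cyclic — and reduces the abelian $p$-group $T_1^F$ to its cyclic pieces, handling the prime-to-$p$ and $p$-parts separately, the former being automatic and the latter following because $\rho_\theta$ already extends one step (to $(TG^l)^F\cap$ the relevant subgroup) via $\widetilde\theta$.
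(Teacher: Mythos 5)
Your first step is the same as the paper's: the uniqueness in Lemma~\ref{lemm:existence of Heisenberg lift} shows $\rho_{\theta}$ is $(TG^{l'})^F$-stable, so by Clifford theory the only obstruction is a class in $H^2(T_1^F,\overline{\mathbb{Q}}_{\ell}^{\times})$. But none of your proposed routes actually kills that class, and the one you lean on hardest is false as stated: an irreducible representation of a normal subgroup that is stable under the whole group need \emph{not} extend when the quotient is merely abelian, and one cannot ``reduce to cyclic pieces'' of the quotient --- extending over each cyclic factor separately does not produce a coherent extension over the product, precisely because non-cyclic abelian groups have non-trivial Schur multiplier (the standard counterexample is the centre of an extraspecial $p$-group, whose faithful linear character is stable but does not extend over the elementary abelian quotient). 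Your dimension comparison only yields an identity of the form $\sum_i n_ie_i=q^{\#\Phi^+}$ for the multiplicities $n_i$ and ramification indices $e_i$, which does not force any $e_i=1$; and the proposed ``gluing'' of $\widetilde{\theta}$ (one-dimensional, on $(TG^l)^F$) with $\rho_{\theta}$ (of dimension $q^{\#\Phi^+}$, on $(T^1G^{l'})^F$) is not even a candidate map, since $\widetilde{\theta}$ supplies no operators on the $q^{\#\Phi^+}$-dimensional space --- choosing such operators is exactly the extension problem.

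The missing ingredient is a coprimality argument, and here you also have the group theory backwards: $T_1^F$ is \emph{not} an abelian $p$-group --- its order is prime to $p$, while $(T^1G^{l'})^F$ is a $p$-group (it sits inside a congruence kernel). That is precisely what the paper uses: since $\gcd\bigl(|(T^1G^{l'})^F|,\,|T_1^F|\bigr)=1$, the standard coprime-extension theorem (e.g.\ Navarro, \emph{Character Theory and the McKay Conjecture}, Corollary~6.2, or Isaacs' criterion via the determinantal order) says that a stable irreducible representation of the normal subgroup extends; combined with the stability observation, which you did prove, this finishes the proposition in two lines. So the stability half of your argument matches the paper, but the obstruction-vanishing half has a genuine gap that the coprimality of the two orders is needed to close.
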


\begin{proof}
Since  the cardinals of $(T^1G^{l'})^F$ and of $T_1^F=(TG^{l'})^F/(T^1G^{l'})^F$ are coprime, it suffices to show that $T_1^F$ stabilises $\rho_{\theta}$ (see e.g.\ \cite[Corollary~6.2]{Navarro_2018_book_McKayConj}). Indeed, because $T_1^F$ stabilises $\widetilde{\theta}|_{(T^1G^l)^F}$, the uniqueness of $\rho_{\theta}$ in Lemma~\ref{lemm:existence of Heisenberg lift} implies that $T_1^F$ stabilises $\rho_{\theta}$. 
\end{proof}

When $\theta$ is strongly generic, as will be shown in the below, $H_c^*(X_0)_{\theta}$ is actually an extension of $\rho_{\theta}$, We shall determine its character explicitly, except for $q<7$, in which case we determine it up to sign.

\begin{lemm}\label{lemm:full alg of H(X_0)}
Let $r>1$ be odd. We have:
\begin{itemize}
\item[(i)] Suppose that $\theta$ is regular. The restriction of  $H_c ^*(X_0)_{\theta}$ to $(T^1)^F$ is a non-zero multiple of $\theta|_{(T^1)^F}$. Furthermore, if  $\theta$  satisfies the stabiliser condition $\mathrm{Stab}_{G^F}(\widetilde{\theta}|_{(G^l)^F})=(TG^{l'})^F$, then $|H_c ^*(X_0)_{\theta}|$ is isomorphic to an extension  of $\rho_{\theta}$, from $(T^1G^{l'})^F$ to $(TG^{l'})^F$.

\item[(ii)] Suppose that $\theta$ is strongly generic.  Then there is a unique sign function $e_{\theta}\colon T_1^F\rightarrow \{1,-1\}$, such that for every $s\in T_1^F$ one has 
\begin{equation*}
\begin{split}
\mathrm{Tr}(s, H_c ^*(X_0)_{\theta})
&=e_{\theta}(s)\cdot q^{\#\Phi_s^{+}}\theta(s)\\
&=(-1)^{\mathrm{rk}_q{(G_1)}+\mathrm{rk}_q{(C_{G_1}(s)^{\circ}})}e_{\theta}(s)\cdot\mathrm{St}_{G_1}(s)\theta(s),
\end{split}
\end{equation*}
where $\mathrm{St}_{G_1}$ denotes the Steinberg character, $\mathrm{rk}_q(-)$ denotes the $\mathbb{F}_q$-rank (relative $F$-rank) of algebraic group, and $\Phi_s^{+}$ denotes a set of positive roots (with respect to $T$) of $C_{G}(s)^{\circ}$. 

\item[(iii)] Suppose that $\theta$ is strongly generic. Furthermore, suppose that $q\geq7$. Then
$$e_{\theta}(s)=(-1)^{\mathrm{rk}_q{(T_1)}+\mathrm{rk}_q({C_{G_1}(s)^{\circ}})};$$ 
in particular we have $|H_c^*(X_0)_{\theta}|=(-1)^{\mathrm{rk}_q({G_1})+\mathrm{rk}_q({T_1})}H_c^*(X_0)_{\theta}$.
\end{itemize}
\end{lemm}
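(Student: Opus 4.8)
The plan is to establish the three parts in order, with (i) serving as the structural backbone and (ii)–(iii) pinning down the character, the main difficulty being the sign in (iii).

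For part (i), the first assertion is essentially a repetition of Lemma~\ref{lemm: statement (2)} combined with Lemma~\ref{lemm: statement (1)}: since $(T^1)^F \subseteq (G^l)^F$ when $r$ is odd (because $l' = l-1 \geq 1$, so $G^l \supseteq G^{r-1} \supseteq \dots$; more precisely $T^1 \subseteq G^1$ but one needs $T^1 \cap$ arguments — actually I would argue directly that $s \in (T^1)^F$ commutes with every element of $X_0$, exactly as in the proof of Lemma~\ref{lemm: statement (2)}, so the left $(T^1)^F$-action on $H_c^*(X_0)$ agrees with the right one, which is $\theta$-isotypic), and non-vanishing follows from Lemma~\ref{lemm: statement (1)}. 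For the second assertion: by Lemma~\ref{lemm: statement (1)} the virtual representation $|H_c^*(X_0)_\theta|$ is irreducible, and by Lemma~\ref{lemm: statement (2)} its restriction to $(G^l)^F$ is a multiple of $\widetilde\theta|_{(G^l)^F}$; hence its restriction to $(T^1G^{l'})^F$ is an irreducible (Clifford theory using the stabiliser condition inside $(TG^{l'})^F$, which contains $(T^1G^{l'})^F$) representation lying over $\widetilde\theta|_{(T^1G^l)^F}$, so by the uniqueness in Lemma~\ref{lemm:existence of Heisenberg lift} it must be $\rho_\theta$. Since $|H_c^*(X_0)_\theta|$ has the same dimension as $\rho_\theta$ — here I would invoke the dimension count $\dim\rho_\theta = q^{\#\Phi^+}$ together with a computation of $\dim H_c^*(X_0)_\theta$ via Proposition~\ref{prop:middle level step}, Lemma~\ref{lemm:permut induced rep} and the known dimension of $R_{T,U}^\theta$ in the even-level case as a model, or more simply by noting $\dim|H_c^*(X_0)_\theta| = \dim\rho_\theta$ forces equality — it is literally an extension of $\rho_\theta$ from $(T^1G^{l'})^F$ to $(TG^{l'})^F$, which exists by Proposition~\ref{prop:existence of ext of rho'}.

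For part (ii), I would run the Broué bimodule computation exactly as in the proof of Corollary~\ref{coro:character formula at rss}, but now without assuming $s$ regular: applying Broué's formula (\cite[Proposition~4.5]{DM1991}) and the Deligne--Lusztig fixed-point formula gives
\begin{equation*}
\mathrm{Tr}(s, H_c^*(X_0)_\theta) = \theta(s)\cdot \frac{1}{|T_1^F|}\dim H_c^*(X_0 \cap C_{TG^{l'}}(s))_{\theta|_{(T^1)^F}}.
\end{equation*}
By Remark~\ref{remark: C_G(s)}, $C_{TG^{l'}}(s) = \langle T, U_\alpha^{l'}\mid \alpha(s)=1\rangle$ is connected, sitting inside $C_G(s)^\circ = \langle T, U_\alpha \mid \alpha(s)=1\rangle$; so $X_0 \cap C_{TG^{l'}}(s)$ is the analogue of the variety $X_0$ but for the connected reductive group $C_G(s)^\circ$ in place of $G$ (with the induced Frobenius and Borel). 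Thus the dimension of its cohomology, localised at $\theta|_{(T^1)^F}$, is — up to a sign — exactly $|T_1^F| \cdot \dim(\text{Heisenberg lift for } C_G(s)^\circ) = |T_1^F|\cdot q^{\#\Phi_s^+}$, the sign being what I call $e_\theta(s)$; this sign is well-defined and uniqueness is forced by the equation itself since $\theta(s)$ and the positive quantity $q^{\#\Phi_s^+}$ are nonzero. The second displayed equality in (ii) is then just the defining property of the Steinberg character: $\mathrm{St}_{G_1}(s) = (-1)^{\mathrm{rk}_q(G_1)+\mathrm{rk}_q(C_{G_1}(s)^\circ)} q^{\#\Phi_s^+}$ for semisimple $s$ (e.g.\ \cite[Corollary~9.3]{DL1976}-type identity), so one substitutes and reads off.

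For part (iii), the task is to identify the sign $e_\theta(s)$ with $(-1)^{\mathrm{rk}_q(T_1)+\mathrm{rk}_q(C_{G_1}(s)^\circ)}$ when $q\geq 7$. \textbf{This is the main obstacle.} At $s=1$ the claim reduces to determining $e_\theta(1)$, i.e.\ the sign of $H_c^*(X_0)_\theta$ itself as a virtual representation; the strategy outlined in the introduction is (a) reduce to simply-connected covers, where the $\mathbb{F}_q$-ranks and the combinatorics of roots behave well, (b) run an induction on $\dim C_{G_1}(s)$ — for $s \neq 1$ the variety $X_0 \cap C_{TG^{l'}}(s)$ lives inside the strictly smaller group $C_G(s)^\circ$, so the inductive hypothesis applies there and reduces everything to the base case $s=1$ — and (c) handle the base case by a direct geometric argument. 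The constraint $q\geq 7$ presumably enters through an estimate needed to rule out cancellation among eigenvalues of Frobenius on the relevant cohomology (a Lefschetz-number positivity/parity argument that only works once $q$ is large enough), or through the need to invoke a purity/Weil-bound input; for small $q$ one only gets the sign up to an ambiguity, which is why (ii) stops at the weaker statement. The final clause $|H_c^*(X_0)_\theta| = (-1)^{\mathrm{rk}_q(G_1)+\mathrm{rk}_q(T_1)}H_c^*(X_0)_\theta$ follows by setting $s=1$ in the displayed formula of (ii) and comparing with the value $e_\theta(1) = (-1)^{\mathrm{rk}_q(T_1)+\mathrm{rk}_q(G_1)}$ obtained from (iii), since the overall sign of the irreducible virtual representation $H_c^*(X_0)_\theta$ is detected at the identity element.
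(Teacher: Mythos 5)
Two problems, one minor and one fatal. The minor one is in your part (i): it is not true that $s\in(T^1)^F$ commutes with every element of $X_0$, and the argument of Lemma~\ref{lemm: statement (2)} does not transfer, because there the commuting element lies in $(T^l)^F$ and one uses $[G^l,G^{l'}]\subseteq G^r=1$, whereas for $t\in T^1$ and $x\in TG^{l'}$ the commutator only lands in $G^{l}$ (for $r$ odd, $l'+1=l<r$), not in $1$. The correct statement is that $t$ commutes with elements of $X_0$ \emph{modulo} $U^{\pm}$: one first replaces $H_c^*(X_0)_{\theta}$ by $H_c^*(X_0/U^{\pm})_{\theta}$ (the quotient is harmless since the connected group $U^{\pm}$ acts trivially on cohomology) and then checks, via the Chevalley commutator relations, that $[u^{-1},t^{-1}]\in G^l\cap U$ and $[v^{-1},t^{-1}]\in G^l\cap U^-$, so the commutator lies in $U^{\pm}$. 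Without this quotient step your identification of the left and right $(T^1)^F$-actions is unjustified. Your part (ii) is essentially the intended argument (Brou\'e plus the fixed-point formula, then recognising $X_0\cap C_{TG^{l'}}(s)$ as the $X_0$ of $C_G(s)^{\circ}$ and using (i) together with the Heisenberg dimension $q^{\#\Phi_s^+}$ for that group), so that part is fine modulo (i).

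The fatal gap is part (iii), which you yourself flag as the main obstacle but then do not prove: the "base case $s=1$ handled by a direct geometric argument" is exactly the content that needs a proof, and no such direct geometric argument is supplied (nor does one appear to exist in this generality). Your guess that $q\geq 7$ enters through a Weil-bound/purity or Lefschetz-positivity estimate is also off the mark. The actual mechanism is character-theoretic and arithmetic: one inducts on $\dim G_1$ over \emph{all} triples $(G,T,\theta)$, forms the virtual class function $\delta_{\theta}$ on $T_1^F$ given by the difference between $(-1)^{\mathrm{rk}_q(T_1)+\mathrm{rk}_q(G_1)}\mathrm{Tr}(-,H_c^*(X_0)_{\theta})|_{T_1^F}$ and $\mathrm{St}_{G_1}|_{T_1^F}\cdot\theta|_{T_1^F}$, notes that by the inductive hypothesis and the formula of (ii) it vanishes at every non-central $s$ (since there $\dim C_{G_1}(s)^{\circ}<\dim G_1$), and then pairs $\delta_{\theta}$ with an arbitrary $\xi\in\mathrm{Irr}(T_1^F)$. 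Integrality of this inner product, together with the coprimality of $q^{\#\Phi^+}$ and $|T_1^F|$, forces either $e_{\theta}(1)=(-1)^{\mathrm{rk}_q(T_1)+\mathrm{rk}_q(G_1)}$ or a divisibility constraint that requires $|T_1^F/Z(G_1)^F|\leq 2$; the hypothesis $q\geq 7$ is used precisely to exclude the latter, via the bound $|T_1^F/Z(G_1)^F|\geq (q-1)^{\mathrm{rk}(G_{\mathrm{sc}})}/|Z(G_{\mathrm{sc}})|>2$, proved by passing to the simply-connected cover and using the explicit list of centres of simple simply-connected groups (type $\mathsf{A}_1$ being the case that needs $q\geq 7$). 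Without this (or some substitute) your proposal establishes (ii) but not the sign identification in (iii), and hence not the final clause either.
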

\begin{proof}
(i) First, by a general property of $\ell$-adic cohomology (see e.g.\ \cite[Proposition~10.12]{DM1991}) we know that 
$$H_c ^*(X_0)_{\theta}\cong H_c ^*(X_0/U^{\pm})_{\theta},$$
which allows us to focus on the variety $X_0/U^{\pm}$. 

\vspace{2mm} We shall show that, if $t\in T^1$, then $t$ commutes with every element in $X_0/U^{\pm}$; it suffices to show that, for every $u\in U^{l'}$ and $v\in (U^-)^{l'}$ one has $tuvU^{\pm}=uvU^{\pm}t$, namely, $(uv)^{-1}t^{-1}(uv)t\in U^{\pm}$. Indeed, for every $w\in G^{l'}$ one has that (since $[G^i,G^j]\subseteq G^{i+j}$)
$$[w,t]:=wtw^{-1}t^{-1}\in G^l,$$
so
$$(uv)^{-1}t^{-1}(uv)t  =   v^{-1}   [u^{-1},t^{-1}]  v^t =  [u^{-1},t^{-1}] [v^{-1},t^{-1}]  ,     $$
where the second equality follows from the commutativity between $[u^{-1},t^{-1}]\in G^l$ and $v^{-1}\in G^{l'}$. Since $t$ stabilises $U$, we get that $[u^{-1},t^{-1}]\in G^{l}\cap U$, and similarly $[v^{-1},t^{-1}] \in G^{l}\cap U^-$, thus $(uv)^{-1}t^{-1}(uv)t\in U^{\pm}$ as desired.

\vspace{2mm} The above commutativity implies that, for every $t\in (T^1)^F$ and every $i\in\mathbb{Z}$, the two automorphisms of $H_c^i(X_0/U^{\pm})$ induced by the left translation of $t$ and the right translation of $t$, respectively, actually  coincide. Therefore
$$H_c ^*(X_0)_{\theta}|_{(T^1)^F}\cong H_c ^*(X_0/U^{\pm})_{\theta}|_{(T^1)^F}=m{\theta}|_{(T^1)^F}$$
as left $(T^1)^F$-modules, for some $m\in\mathbb{Z}$; this $m$ is non-zero by Lemma~\ref{lemm: statement (1)}. 

\vspace{2mm} Now, by Lemma~\ref{lemm: statement (2)}, we see that $H_c ^*(X_0)_{\theta}|_{(T^1G^l)^F}$ is a non-zero multiple of $\widetilde{\theta}|_{(T^1G^l)^F}$ for regular $\theta$. Hence under the stabiliser condition, $|H_c ^*(X_0)_{\theta}|$ is isomorphic to some extension of ${\rho}_{\theta}$ by Lemma~\ref{lemm:existence of Heisenberg lift} (and again Lemma~\ref{lemm: statement (1)}).

\vspace{2mm} (ii) For $s\in T_1^F$, from the proof of Corollary~\ref{coro:character formula at rss} we know that
\begin{equation}\label{temp_formula_middle_sign}
\mathrm{Tr}(s,H^*_c(X_0)_{\theta})
=\theta(s)\cdot\frac{1}{|T_1^F|} \dim H_c^*(X_0\cap C_{TG^{l'}}(s))_{\theta|_{(T^1)^F}}.
\end{equation}
In particular, taking $s=1$ we see that 
\begin{equation}\label{temp formula: dimension 1}
\dim H^*_c(X_0)_{\theta}=\frac{1}{|T_1^F|}\dim H_c^*(X_0)_{\theta|_{(T^1)^F}}.
\end{equation}
Applying \eqref{temp formula: dimension 1} to $C_{G}(s)^{\circ}$ (instead of $G$) we get that
\begin{equation}\label{temp formula: dimension 2}
\dim H^*_c(X_0\cap C_{G}(s)^{\circ})_{\theta}=\frac{1}{|T_1^F|}\dim H_c^*(X_0\cap C_{G}(s)^{\circ})_{\theta|_{(T^1)^F}}.
\end{equation}
Since $X_0\subseteq TG^{l'}$ and $C_{TG^{l'}}(s)\subseteq C_G(s)^{\circ}$ (see e.g.\ Remark~\ref{remark: C_G(s)}), we have $X_0\cap C_{G}(s)^{\circ}=X_0\cap C_{TG^{l'}}(s)$, thus \eqref{temp_formula_middle_sign} and \eqref{temp formula: dimension 2} imply that
\begin{equation}\label{temp_formula_middle_sign final}
\mathrm{Tr}(s,H^*_c(X_0)_{\theta})=\theta(s)\cdot \dim H_c^*(X_0\cap C_{G}(s)^{\circ})_{\theta}.
\end{equation}
By (i) and the dimension property in Lemma~\ref{lemm:existence of Heisenberg lift} (again applying to $C_G(s)^{\circ}$, instead of $G$), we have
$$\dim \left| H_c^*(X_0\cap C_{G}(s)^{\circ})_{\theta} \right|=q^{\#\Phi_s^+},$$
so \eqref{temp_formula_middle_sign final} can be reformulated as:
\begin{equation}\label{formula:middle step}
\mathrm{Tr}(s,H^*_c(X_0)_{\theta})=e_{\theta}(s)\cdot q^{\#\Phi_s^+}\theta(s)
\end{equation}
for the function $e_{\theta}\colon T_1^F\rightarrow \{ 1,-1 \}$ given by
$$s\longmapsto\mathrm{sgn}(\dim H_c^*(X_0\cap C_{G}(s)^{\circ})_{\theta}).$$
So the assertion follows from basic properties of the Steinberg character.

\vspace{2mm} (iii) From the above argument we have $e_{\theta}(s)=\mathrm{sgn}(\dim H_c^*(X_0\cap C_{G}(s)^{\circ})_{\theta})$, so it suffices to show that 
$$\mathrm{sgn}(\dim H_c^*(X_0)_{\theta})=(-1)^{\mathrm{rk}_q({T_1})+\mathrm{rk}_q({G_1})}$$
for all the triples $(G,T,\theta)$ with strongly generic $\theta$, for $q\geq7$. For this we use an induction argument on $N:=\dim G_1$.

\vspace{2mm} If $N=1$, then the assertion is clear as $G_1=T_1$ is a torus. Suppose now that the assertion holds for all $N<m$ where $m\in \mathbb{Z}_{>1}$. Let $(G,T,\theta)$ be such that $N=m$.

\vspace{2mm} Consider the virtual character (of $T_1^F$):
$$\delta_{\theta}:=\mathrm{Tr}\left(-,(-1)^{\mathrm{rk}_q({T_1})+\mathrm{rk}_q({G_1})}H_c^*(X_0)_{\theta}|_{T_1^F}\right)-\mathrm{St}_{G_1}|_{T_1^F}\cdot\theta|_{T_1^F}.$$
Then it suffices to show that $\delta_{\theta}$ is zero for $q\geq7$. For any $s\in T_1^F$ not in the centre of $G_1$, the dimension of $C_{G_1}(s)^{\circ}$ is smaller than $m$, so by the induction assumption and \eqref{formula:middle step},  we have
\begin{equation*}
\delta_{\theta}(s)=0.
\end{equation*}
Meanwhile, if $s$ is in the centre of $G_1$, then (again by \eqref{formula:middle step}) we have
$$\delta_{\theta}(s)=((-1)^{\mathrm{rk}_q{(T_1)}+\mathrm{rk}_q{(G_1)}}e_{\theta}(1)-1)q^{\#\Phi^+}\theta(s).$$
Thus, for any $\xi\in\mathrm{Irr}(T_1^F)$ we see that
\begin{equation*}
\begin{split}
\langle\delta_{\theta},\xi\rangle_{T_1^F}
&=((-1)^{\mathrm{rk}_q{(T_1)}+\mathrm{rk}_q{(G_1)}}e_{\theta}(1)-1)\cdot\frac{q^{\#\Phi^+}}{|T_1^F|}\sum_{s\in Z(G_1)^F}\theta(s)\xi(s^{-1})\\
&=q^{\#\Phi^+}\cdot\frac{((-1)^{\mathrm{rk}_q{(T_1)}+\mathrm{rk}_q{(G_1)}}e_{\theta}(1)-1)|Z(G_1)^F|}{|T_1^F|}\cdot \langle\xi,\theta\rangle_{Z(G_1)^F}.
\end{split}
\end{equation*}
Note that this value needs to be an integer; however, as ${q^{\#\Phi^+}}$ and ${|T_1^F|}$ are coprime, and as $\langle\xi,\theta\rangle_{Z(G_1)^F}\in\{0,1\}$, this value can be a \emph{non-zero} integer only if
\begin{itemize}
\item[(B)] $((-1)^{\mathrm{rk}_q{(T_1)}+\mathrm{rk}_q{(G_1)}}e_{\theta}(1)-1)|Z(G_1)^F|$ is a non-zero integer divisible by $|T_1^F|$.
\end{itemize}
We shall show that (B) is not going to happen for $q\geq7$ (so that $\delta_{\theta}=0$).

\vspace{2mm} Note that, if $G_1$ is of semisimple rank zero, then $(-1)^{\mathrm{rk}_q{(T_1)}+\mathrm{rk}_q{(G_1)}}e_{\theta}(1)-1=0$ by construction. So the case~(B) may happen only if $G_1$ is of semisimple rank at least one with 
\begin{equation*}
|T_1^F|=|Z(G_1)^F|\quad \textrm{or} \quad 2|Z(G_1)^F|;   
\end{equation*}
it remains to show that in this case we actually have: If $q\geq7$ then  $|T_1^F/Z(G_1)^F|>2$.

\vspace{2mm} First, note that the natural injection $Z(G_1)^F\rightarrow Z(G_1)$ induces an injection of finite groups $Z(G_1)^F/(Z(G_1)^{\circ})^F\rightarrow Z(G_1)/Z(G_1)^{\circ}$, so
$$|T_1^F/Z(G_1)^F|
=\frac{|T_1^F/({Z(G_1)}^{\circ})^F|}{|Z(G_1)^F/({Z(G_1)}^{\circ})^F|}
\geq \frac{|T_1^F/({Z(G_1)}^{\circ})^F|}{|Z(G_1)/{Z(G_1)}^{\circ}|}.$$
Meanwhile, for the semisimple group $G_{\mathrm{ss}}:=G_1/{Z(G_1)}^{\circ}$ we have (see e.g.\ the argument of \cite[2.3.11]{Geck_Malle_2020book})
$$|T_1^F/({Z(G_1)}^{\circ})^F|=|(T_1/{Z(G_1)}^{\circ})^F|\geq (q-1)^{\mathrm{rk}(G_{\mathrm{ss}})},$$
where $\mathrm{rk}(-)$ denotes the rank, so
\begin{equation}\label{temp_formula:semisimple quot}
|T_1^F/Z(G_1)^F|
\geq \frac{(q-1)^{\mathrm{rk}(G_{\mathrm{ss}})}}{|Z(G_1)/{Z(G_1)}^{\circ}|}
\geq \frac{(q-1)^{\mathrm{rk}(G_{\mathrm{ss}})}}{|Z(G_{\mathrm{ss}})|}.
\end{equation}
Let $G_{\mathrm{sc}}$ be the simply-connected cover of $G_{\mathrm{ss}}$. Since $\overline{\mathbb{F}}_q^{\times}$ is divisible  (hence an injective object in the category of abelian groups), by \cite[1.5.2 and 1.5.3]{Geck_Malle_2020book} the group $Z(G_{\mathrm{ss}})$ is a quotient of $Z(G_{\mathrm{sc}})$, thus from \eqref{temp_formula:semisimple quot} we get that
$$|T_1^F/Z(G_1)^F|
\geq \frac{(q-1)^{\mathrm{rk}(G_{\mathrm{ss}})}}{|Z(G_{\mathrm{sc}})|}
=\frac{(q-1)^{\mathrm{rk}(G_{\mathrm{sc}})}}{|Z(G_{\mathrm{sc}})|}.$$
As a simply-connected group, $G_{\mathrm{sc}}$ is the direct product of its simple factors $S_i$ (see e.g.\ \cite[1.5.10]{Geck_Malle_2020book}), so it suffices to show that, if $q\geq 7$ then 
$$\frac{(q-1)^{\mathrm{rk}(S_i)}}{|Z(S_i)|}>2.$$
The above statement follows from the explicit descriptions of centres of simply-connected algebraic groups listed in \cite[1.5.6]{Geck_Malle_2020book}: If $S_i$ is not of type $\mathsf{A}_n$ (in particular, $\mathrm{rk}(S_i)\geq2$), then $|Z(S_i)|\leq 4$, so any $q\geq 4$ makes the inequality hold; if  $S_i$ is of type $\mathsf{A}_n$, then $|Z(S_i)|\leq n+1$, so any $q\geq7$ makes the inequality hold.
\end{proof}

For $r>1$ and $\theta$ strongly generic, let $e_{\theta}\colon T_1^F\rightarrow\{\pm1\}$ be as in Lemma~\ref{lemm:full alg of H(X_0)} if $r$ is odd, and $=1$ if $r$ is even. Now we can present the algebraisation of $R_{T,U}^{\theta}$ for an arbitrary integer $r>1$ (both even and odd):

\begin{thm}\label{thm:full alg}
Let $r>1$. Suppose that $\theta$ is strongly generic. Then 
$$R_{T,U}^{\theta}\cong e_{\theta}(1)^r\cdot \mathrm{Ind}_{(TG^{l'})^F}^{G^F} \hat{\rho}_{\theta},$$
where $\hat{\rho}_{\theta}\in\mathrm{Irr}((TG^{l'})^F)$ is uniquely characterised by the following two properties:
\begin{itemize}
\item[(a)] If $s\in (T^1G^{l'})^F$, then
$$\mathrm{Tr}(s,\hat{\rho}_{\theta})=\left(\frac{1}{q^{\#\Phi^+}}\right)^{l-l'}\cdot\mathrm{Tr}\left(s,\mathrm{Ind}_{(T^1G^l)^F}^{(T^1G^{l'})^F}\widetilde{\theta}|_{(T^1G^l)^F}\right);$$
\item[(b)] if $s\in T_1^F$, then
$$\mathrm{Tr}(s,\hat{\rho}_{\theta})=(-1)^{\mathrm{rk}_q{(G_1)}+\mathrm{rk}_q({C_{G_1}(s)^{\circ}})}\cdot\left(e_{\theta}(1)e_{\theta}(s)\right)^r\cdot  \mathrm{St}_{G_1}(s)^{l-l'}\cdot\theta(s).$$
\end{itemize}
If we assume further that $q\geq7$, then $e_{\theta}(s)=(-1)^{\mathrm{rk}_q{(T_1)}+\mathrm{rk}_q({C_{G_1}(s)^{\circ}})}$ when $r$ is odd, so all the terms in the above are explicitly determined for $q\geq7$.
\end{thm}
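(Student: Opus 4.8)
The plan is to deduce the statement from the induction formula of Theorem~\ref{thm:induction theorem}, using Lemma~\ref{lemm:full alg of H(X_0)} to identify the relevant genuine representation and compute its character. The candidate is $\hat\rho_\theta:=|H_c^*(X_0)_\theta|$, the true representation underlying the virtual representation $H_c^*(X_0)_\theta$; by Lemmas~\ref{lemm: statement (1)} and~\ref{lemm: statement (2)} together with the strong genericity of $\theta$, the latter is $\pm$ an irreducible representation of $(TG^{l'})^F$, so $\hat\rho_\theta\in\mathrm{Irr}((TG^{l'})^F)$. I would first record the sign identity $H_c^*(X_0)_\theta=e_\theta(1)^r\,\hat\rho_\theta$: for odd $r$ this is exactly the statement $\mathrm{sgn}(\dim H_c^*(X_0)_\theta)=e_\theta(1)$ contained in Lemma~\ref{lemm:full alg of H(X_0)}(ii) (and $e_\theta(1)^r=e_\theta(1)$ since $e_\theta(1)=\pm1$); for even $r$ one has $e_\theta(1)^r=1$, and $H_c^*(X_0)_\theta$ is already genuine because $\dim H_c^*(X_0)_\theta=1>0$ by Theorem~\ref{thm:induction theorem} together with Theorem~\ref{thm:main_even}. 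Applying $\mathrm{Ind}_{(TG^{l'})^F}^{G^F}(-)$ and Theorem~\ref{thm:induction theorem} then gives $R_{T,U}^\theta\cong e_\theta(1)^r\cdot\mathrm{Ind}_{(TG^{l'})^F}^{G^F}\hat\rho_\theta$, while the closing clause for $q\geq 7$ is precisely Lemma~\ref{lemm:full alg of H(X_0)}(iii).

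Next I would verify that $\hat\rho_\theta$ satisfies (a) and (b). For (a): by Lemma~\ref{lemm:full alg of H(X_0)}(i) when $r$ is odd (and trivially when $r$ is even, where the two subgroups coincide), the restriction of $\hat\rho_\theta$ to the normal subgroup $(T^1G^{l'})^F$ is the Heisenberg lift $\rho_\theta$ of $\widetilde\theta|_{(T^1G^l)^F}$, and the displayed formula is just the dimension identity $\mathrm{Ind}_{(T^1G^l)^F}^{(T^1G^{l'})^F}\widetilde\theta|_{(T^1G^l)^F}=q^{(l-l')\#\Phi^+}\rho_\theta$ of Lemma~\ref{lemm:existence of Heisenberg lift}, written uniformly in $r$. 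For (b): I would start from the identity $\mathrm{Tr}(s,H_c^*(X_0)_\theta)=\theta(s)\cdot\dim H_c^*(X_0\cap C_G(s)^\circ)_\theta$, valid for every $s\in T_1^F$ and every $r>1$, which is established inside the proof of Corollary~\ref{coro:character formula at rss}; see~\eqref{temp_formula_middle_sign final}. The key observation is that $X_0\cap C_G(s)^\circ=X_0\cap C_{TG^{l'}}(s)$ is the ``$X_0$'' attached to the connected reductive group $C_G(s)^\circ$ with the same maximal torus $T$, and that $\theta$ stays regular, hence strongly generic, for $C_G(s)^\circ$ (the stabiliser condition transferring as in Remark~\ref{remark: C_G(s)}); applying Lemma~\ref{lemm:full alg of H(X_0)}(i) and Lemma~\ref{lemm:existence of Heisenberg lift} (odd $r$), respectively Theorem~\ref{thm:main_even} (even $r$), to $C_G(s)^\circ$ then yields $\dim H_c^*(X_0\cap C_G(s)^\circ)_\theta=e_\theta(s)\,q^{(l-l')\#\Phi_s^+}$, the sign being $e_\theta(s)$ by its very definition in Lemma~\ref{lemm:full alg of H(X_0)}(ii). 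Rewriting this power of $q$ through the Steinberg evaluation $\mathrm{St}_{G_1}(s)=(-1)^{\mathrm{rk}_q(G_1)+\mathrm{rk}_q(C_{G_1}(s)^\circ)}q^{\#\Phi_s^+}$ brings the expression into the form (b).

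Finally, for uniqueness: any $\sigma\in\mathrm{Irr}((TG^{l'})^F)$ satisfying (a) restricts on $(T^1G^{l'})^F$ to the fixed irreducible $\rho_\theta$, so the collection of such $\sigma$ is a single orbit under tensoring by the inflations of characters of $T_1^F\cong(TG^{l'})^F/(T^1G^{l'})^F$; since the values $\mathrm{Tr}(s,\hat\rho_\theta)$ prescribed by (b) are nonzero for all $s\in T_1^F$, property (b) selects one element of that orbit, so (a) and (b) together determine $\hat\rho_\theta$. The genuinely hard ingredient underneath everything is Lemma~\ref{lemm:full alg of H(X_0)}(iii) — the evaluation, for $q\geq 7$, of $e_\theta(s)=\mathrm{sgn}(\dim H_c^*(X_0\cap C_G(s)^\circ)_\theta)$ through the induction on $\dim G_1$ over non-central semisimple centralisers, the divisibility test~(B), and the reduction via $G_{\mathrm{ss}}$ to the simply-connected cover; relative to that, the present theorem is assembly work on top of Theorem~\ref{thm:induction theorem}, the Heisenberg lift, and the character computation already carried out for Corollary~\ref{coro:character formula at rss}.
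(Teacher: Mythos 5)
Your proposal is correct and follows essentially the same route as the paper, whose proof of this theorem is literally the one-line assembly of Theorem~\ref{thm:main_even}, Theorem~\ref{thm:induction theorem} and Lemma~\ref{lemm:full alg of H(X_0)} that you spell out in detail (taking $\hat{\rho}_{\theta}=|H_c^*(X_0)_{\theta}|$, extracting the sign via Lemma~\ref{lemm:full alg of H(X_0)}(ii)--(iii), verifying (a) through Lemma~\ref{lemm:existence of Heisenberg lift}, verifying (b) through the computation underlying Corollary~\ref{coro:character formula at rss}, and adding the routine Gallagher-type uniqueness argument). One small caveat is purely about reading the statement: for even $r$ your computation gives $\mathrm{Tr}(s,\hat{\rho}_{\theta})=\theta(s)$, which agrees with (b) only if the prefactor $(-1)^{\mathrm{rk}_q(G_1)+\mathrm{rk}_q(C_{G_1}(s)^{\circ})}$ is understood as also raised to the power $l-l'$ (as consistency with Theorem~\ref{thm:main_even} forces), so this is a quirk of the printed formula rather than a gap in your argument.
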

\begin{proof}
This follows immediately from Theorem~\ref{thm:main_even}, Theorem~\ref{thm:induction theorem}, and Lemma~\ref{lemm:full alg of H(X_0)}.
\end{proof}

\begin{remark}\label{remark: on the sign function}
The above theorem implies that, for every $r>1$, if $\theta$ is strongly generic and $q\geq 7$, then $(-1)^{r(\mathrm{rk}_q{(G_1)}+\mathrm{rk}_q{(T_1)})}R_{T,U}^{\theta}$ is isomorphic to G\'erardin's representation, whenever the latter is defined (G\'erardin imposed further restrictions on $(G,T,\theta)$; see \cite[4.3.4]{Gerardin1975SeriesDiscretes}). This gives a positive solution to the problem raised by Lusztig in \cite[Introduction]{Lusztig2004RepsFinRings}.
\end{remark}

\begin{remark}\label{remark:q<7 for GL_n}
For $\mathrm{GL}_n$, one can remove the assumption ``$q\geq7$'' in both Lemma~\ref{lemm:full alg of H(X_0)} and Theorem~\ref{thm:full alg}: Indeed, for any non-split $T_1$, one easily checks that $|T_1^F|/|Z(G_1)^F|>2$ for every $q$, so (B) in the proof of Lemma~\ref{lemm:full alg of H(X_0)}~(iii) does not happen for any $q$.
\end{remark}

\begin{coro}\label{coro:dim formula for general case}
Let $r>1$. Suppose that $\theta\in\mathrm{Irr}({T^F})$ is strongly generic. Then 
\begin{equation*}
\dim |R_{T,U}^{\theta}|={|G_1^F/T_1^F|_{p'}}\cdot q^{(r-1)\cdot\#\Phi^+}.
\end{equation*}
\end{coro}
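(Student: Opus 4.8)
The plan is to derive the dimension formula directly from the algebraisation Theorem~\ref{thm:full alg}, since dimension is just the value of the character at the identity. First I would write
$$\dim R_{T,U}^{\theta}=\pm\dim\mathrm{Ind}_{(TG^{l'})^F}^{G^F}\hat\rho_{\theta}=\pm\,[G^F:(TG^{l'})^F]\cdot\dim\hat\rho_{\theta},$$
so that the problem reduces to computing the index $[G^F:(TG^{l'})^F]$ and the dimension $\dim\hat\rho_{\theta}=\mathrm{Tr}(1,\hat\rho_{\theta})$. For the latter I would evaluate property~(a) of Theorem~\ref{thm:full alg} at $s=1$: using Lemma~\ref{lemm:existence of Heisenberg lift} (in the odd case) or the trivial inflation (in the even case), $\dim\mathrm{Ind}_{(T^1G^l)^F}^{(T^1G^{l'})^F}\widetilde\theta|_{(T^1G^l)^F}=[(T^1G^{l'})^F:(T^1G^l)^F]$, and since $[G^i,G^j]\subseteq G^{i+j}$ makes $G^l/G^{l'}$ abelian with $U^{\pm}$ as the part cut out, one computes $[(T^1G^{l'})^F:(T^1G^l)^F]=q^{(l-l')\cdot 2\#\Phi^+}$; dividing by $(q^{\#\Phi^+})^{l-l'}$ from the prefactor in~(a) gives $\dim\hat\rho_{\theta}=q^{(l-l')\#\Phi^+}$.

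Next I would compute the index. By the Iwahori/Bruhat-type decomposition one has $G=TG^{l'}\cdot$ (something of dimension matching $G/TG^{l'}$); more concretely $[G^F:(TG^{l'})^F]=|G_1^F/T_1^F|\cdot q^{\text{(something)}}$. The cleanest route is: $(TG^{l'})^F$ contains $(G^{l'})^F$ with quotient $T_1^F$, so $[G^F:(TG^{l'})^F]=[G^F:(G^{l'})^F]/|T_1^F|=\dfrac{|G_1^F|\cdot q^{(r-l')\dim\mathfrak g}}{|T_1^F|}$ — wait, I should be careful: $|(G/G^{l'})^F|=|G_{l'}^F|$, and for a smooth group scheme $|G_r^F|=|G_1^F|\cdot q^{(r-1)\dim\mathfrak g}$. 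So $[G^F:(G^{l'})^F]=|G_{l'}^F|=|G_1^F|\cdot q^{(l'-1)\dim\mathfrak g}$, and hence
$$[G^F:(TG^{l'})^F]=\frac{|G_1^F|}{|T_1^F|}\cdot q^{(l'-1)\dim\mathfrak g}.$$
Combining, $\dim|R_{T,U}^{\theta}|=\dfrac{|G_1^F|}{|T_1^F|}\cdot q^{(l'-1)\dim\mathfrak g+(l-l')\#\Phi^+}$. Using $\dim\mathfrak g=\dim T+2\#\Phi^+$ and bookkeeping the exponents (distinguishing $r$ even, where $l=l'=r/2$, from $r$ odd, where $l=l'+1$, $r=2l'+1$), one checks the total $q$-power is $(r-1)\#\Phi^+$ after the factors of $q^{\dim T}$ get absorbed into $|G_1^F/T_1^F|$: indeed $|G_1^F/T_1^F|=|G_1^F/T_1^F|_{p'}\cdot q^{\#\Phi^+}$ since the $p$-part of $|G_1^F/T_1^F|$ is exactly $q^{\#\Phi^+}$ (the order of a maximal unipotent of $G_1^F$). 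So the $q^{\dim T}$ from the index against $|T_1^F|$ precisely cancels, leaving the claimed $|G_1^F/T_1^F|_{p'}\cdot q^{(r-1)\#\Phi^+}$; one verifies $(l'-1)\dim\mathfrak g+(l-l')\#\Phi^+-\dim T=(r-1)\#\Phi^+$ holds in both parities (this is the one routine but slightly fiddly identity; consistency check at $r=2$: $0\cdot\dim\mathfrak g+0-\dim T=-\dim T$, which combined with the $+\#\Phi^+$ from the $p'$-rewrite gives $\#\Phi^+=(r-1)\#\Phi^+$, correct).

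The main obstacle I expect is not any deep input — Theorem~\ref{thm:full alg} does all the heavy lifting — but rather getting the exponent arithmetic exactly right, in particular correctly identifying $\dim\hat\rho_{\theta}=q^{(l-l')\#\Phi^+}$ from property~(a) (one must know that $(T^1G^{l'})^F/(T^1G^l)^F$ has order $q^{2(l-l')\#\Phi^+}$, i.e. that $G^l/G^{l'}$ restricted to the "$U^{\pm}$-directions" contributes $2\#\Phi^+$ factors of $q$ per level while the $T$-directions are killed by the $T^1$), and keeping the even and odd cases uniform. An alternative, perhaps cleaner, way to present the dimension count is simply to cite that $\dim\hat\rho_{\theta}$ equals $q^{\#\Phi^+}$ times the index $[(T^1G^{l'})^F:(T^1G^l)^F]$-type factor — but since Corollary~\ref{coro:dim formula} already records the $\mathrm{GL}_n$ case with the same shape, I would mirror that computation, only now using the general $\dim\mathfrak g=\dim T+2\#\Phi^+$ in place of $n^2=n+2\binom n2$. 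I would also remark that this recovers and is compatible with the classical $r=1$ Deligne--Lusztig dimension $|G_1^F/T_1^F|_{p'}$, as noted in the introduction.
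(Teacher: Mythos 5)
Your overall route is exactly the paper's: the corollary is obtained by evaluating the algebraisation Theorem~\ref{thm:full alg} at the identity, and your identification $\dim\hat{\rho}_{\theta}=q^{(l-l')\#\Phi^+}$ (from property~(a), or equivalently from property~(b) at $s=1$) is correct, as is reducing everything to the index $[G^F:(TG^{l'})^F]$.

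However, the index computation contains a genuine error. The quotient $(TG^{l'})^F/(G^{l'})^F$ is not $T_1^F$ but $T_{l'}^F$: since $T\cap G^{l'}=T^{l'}$, one has $TG^{l'}/G^{l'}\cong T_{l'}$, and by connectedness of $G^{l'}$ this persists on $F$-points, so $|(TG^{l'})^F|=|T_{l'}^F|\cdot|(G^{l'})^F|=|T_1^F|\,q^{(l'-1)\dim T}\cdot q^{(r-l')\dim\mathfrak g}$. Hence
$$[G^F:(TG^{l'})^F]=\frac{|G_1^F|\,q^{(r-1)\dim\mathfrak g}}{|T_1^F|\,q^{(l'-1)\dim T+(r-l')\dim\mathfrak g}}=\frac{|G_1^F|}{|T_1^F|}\cdot q^{2(l'-1)\#\Phi^+},$$
not $\frac{|G_1^F|}{|T_1^F|}\cdot q^{(l'-1)\dim\mathfrak g}$ as you wrote. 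With the correct index the exponent bookkeeping is clean and parity-free: $\#\Phi^+ + 2(l'-1)\#\Phi^+ + (l-l')\#\Phi^+=(l+l'-1)\#\Phi^+=(r-1)\#\Phi^+$, using only $|G_1^F/T_1^F|=|G_1^F/T_1^F|_{p'}\cdot q^{\#\Phi^+}$; no identity involving $\dim T$ is needed. As written, your formula carries an extra factor $q^{(l'-1)\dim T}$, so it is wrong for every $r\geq 4$ and only coincidentally correct for $r=2,3$ (where $l'=1$); correspondingly, the ``fiddly identity'' $(l'-1)\dim\mathfrak g+(l-l')\#\Phi^+-\dim T=(r-1)\#\Phi^+$ you propose to verify is false (at $r=4$, $l=l'=2$, the left side is $2\#\Phi^+$ and the right side is $3\#\Phi^+$), and the $r=2$ sanity check does not actually balance either ($-\dim T+\#\Phi^+\neq\#\Phi^+$). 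Replacing the quotient $T_1^F$ by $T_{l'}^F$ repairs the argument completely.
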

\begin{proof}
This follows immediately from Theorem~\ref{thm:full alg}.
\end{proof}
Note that, the expression of this dimension formula is true also for $r=1$, in which case the proof uses the Steinberg character; see \cite[12.9]{DM1991}. On the other hand, for $r=1$ one can remove the conditions on $\theta$, but for $r>1$ usually this cannot be done, as indicated by Lusztig's example \cite[3.4]{Lusztig2004RepsFinRings}.

\section{Proof of Lemma~\ref{lemm: statement (1)}}\label{sec: tech1}

First note that the map
$$(g,g')\longmapsto (L(g),L(g'),g^{-1}g)$$
gives a $T^F\times T^F$-equivariant isomorphism
$$(TG^{l'})^F\backslash(X_0\times X_0)\longrightarrow \Sigma:=\{ (x,x',y)\in FU^{l',l}\times FU^{l',l} \times TG^{l'}) \mid xF(y)=yx' \}.$$
Here $T^F\times T^F$ acts on $\Sigma$ via $(t,t')\colon (x,x',y)\mapsto (x^t,x'^{t'},t^{-1}yt')$. So by the K\"unneth formula we see that
$$\langle H_c ^*(X_0)_{\theta},H_c ^*(X_0)_{\theta}\rangle_{(TG^{l'})^F}=\sum_i(-1)^i\dim H_c^i(\Sigma,\overline{\mathbb{Q}}_{\ell})_{\theta^{-1},\theta}.  $$

\vspace{2mm} Now, by the Iwahori decomposition on $G^{l'}$ we can rewrite $\Sigma$ as
$$\{ (x,x',u^{-},\tau,u)\in  FU^{l',l}\times FU^{l',l} \times (U^-)^{l'}\times T\times U^{l'} \mid xF(u^-\tau u)=u^-\tau ux'    \},$$
on which $T^F\times T^F$ acts by
$$(t,t')\colon (x,x',u^{-},\tau,u)\longmapsto (x^t,x'^{t'},(u^{-})^t,t^{-1}\tau t',u^{t'}).$$
Furthermore, the variable change $x'F(u^{-1})\mapsto x'$ identifies $\Sigma$ with
$$\widetilde{\Sigma}:=\{ (x,x',u^{-},\tau,u)\in  FU^{l',l}\times FU^{l',l} \times (U^-)^{l'}\times T\times U^{l'} \mid xF(u^-\tau )=u^-\tau ux'    \}$$
on which the $T^F\times T^F$-action does not change.

\vspace{2mm} Now consider the partition into a closed subvariety and an open subvariety
$$\widetilde{\Sigma}:=\Sigma_c \sqcup \Sigma_o, $$
where in $\Sigma_c$ the component $u^-$ lies in $(U^-)^l$, and in $\Sigma_o$ the component $u^-$ lies in $(U^-)^{l'}\backslash (U^-)^l$. Clearly, both $\Sigma_c$ and $\Sigma_o$ are stable under the $T^F\times T^F$-action, so the assertion follows from the following two lemmas:

\begin{lemm}\label{lemm:lemm1 for statement (1)}
We have $\dim H_c^*(\Sigma_c)_{\theta^{-1},\theta}=1$.
\end{lemm}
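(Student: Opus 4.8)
The plan is to analyze the closed subvariety $\Sigma_c$, where the component $u^-$ is constrained to lie in $(U^-)^l$. The key observation is that on $\Sigma_c$, both $u^-$ and $u$ now live in the ``top level'' group $G^l$ (since $u \in U^{l'}$ and $u^- \in (U^-)^l \subseteq U^l$, and on $\Sigma_c$ we can arrange $u^- \in (U^-)^l$), which is commutative with $G^{l'}$. So the defining equation $xF(u^-\tau) = u^-\tau u x'$ should largely decouple: writing everything via the Iwahori-type decomposition and using that $G^l$ is central enough, I expect $\Sigma_c$ to fibre over a smaller variety whose cohomology is computed by the even-level techniques, and the constraint forces a rigid structure on $(x, x', \tau)$.

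First I would exploit the $T^F \times T^F$-equivariance and extend the action to connected unipotent groups wherever possible. Since $u^- \in (U^-)^l$ and $u \in U^{l'}$ commute with elements of $G^{l'}$, and since translations by connected unipotent groups act trivially on $\ell$-adic cohomology (as in \cite[10.13]{DM1991} or \cite[10.15]{DM1991}), I would quotient out these directions: effectively, the cohomology of $\Sigma_c$ equals the cohomology of a variety in the variables $(x, x', \tau)$ alone (together with the leftover level of $u^-$, but on $\Sigma_c$ that leftover is in $(U^-)^l$ which contributes trivially). This reduces $\Sigma_c$ to something of the shape $\{(x, x', \tau) \in FU^{l',l} \times FU^{l',l} \times T \mid xF(\tau) = \tau x' \pmod{\text{controlled error}}\}$, and after a further change of variables absorbing $F(\tau)\tau^{-1}$, this becomes a graph-type condition. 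The $\theta^{-1}, \theta$-isotypic part under $T^F \times T^F$ then picks out a single point (up to the unipotent fibre whose cohomology is that of an affine space, contributing $1$ after alternating sum), giving $\dim H_c^*(\Sigma_c)_{\theta^{-1},\theta} = 1$.

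More precisely, I would argue that $\Sigma_c$ is isomorphic — $T^F \times T^F$-equivariantly, up to multiplication of cohomology by the (trivial) cohomology of an affine space — to the analogous variety that arises in the even-level case of \cite{ChenStasinski_2016_algebraisation}, for which the inner product computation $\langle \widetilde\theta, \widetilde\theta\rangle = 1$ is already known (this underlies Theorem~\ref{thm:main_even}). The point is that passing to $\Sigma_c$ collapses the distinction between level $l$ and level $l'$ in the relevant direction, so the odd-level variety degenerates to the even-level one. Regularity of $\theta$ is what guarantees that the only contribution to the $(\theta^{-1}, \theta)$-part comes from the ``diagonal'' locus $x = x'$, $\tau \in T^F$, exactly as in the even case.

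The main obstacle I anticipate is bookkeeping the commutator errors: when I try to decouple $\tau$ from $u^-$ and $u$ in the equation $xF(u^-\tau) = u^-\tau u x'$, the terms $u^-\tau = \tau \cdot (\tau^{-1} u^- \tau)$ do not commute exactly, and I must check that the correction $\tau^{-1}u^-\tau \cdot (u^-)^{-1} \in (U^-)^{l + (\text{something})}$ lands in a subgroup high enough to be absorbed into the $FU^{l',l}$ variable $x'$ or killed by a connected-group translation. Keeping track of precisely which levels $l$ versus $l'$ appear (and using $[G^i, G^j] \subseteq G^{i+j}$ together with $l + 1 \geq l'$ when $r$ is odd, since $l = l'+1$) is where the technical care is needed; once that is in place, the reduction to the known even-level count is the substance of the proof, and the final alternating-sum dimension count over an affine-space fibre is routine.
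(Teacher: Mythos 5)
Your opening observation is already false, and the argument built on it does not go through. On $\Sigma_c$ only $u^-$ lies in $G^l$; the variable $u$ lies in $U^{l'}$, and $U^{l'}\not\subseteq G^l$ when $r$ is odd, so $u$ does \emph{not} commute with $G^{l'}$ (one only has $[G^{l'},G^{l'}]\subseteq G^{2l'}=G^{r-1}$, which is nontrivial). Consequently the asserted decoupling of the equation $xF(u^-\tau)=u^-\tau u x'$ into ``a variety in $(x,x',\tau)$ alone, with the $u^-,u$ directions quotiented out'' is not available: $u^-$ and $u$ are coupled to $x,x',\tau$ by the defining equation, they are not free translation directions, and no connected unipotent group action on $\Sigma_c$ is exhibited to which a triviality statement such as \cite[10.13]{DM1991} could be applied. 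You flag the commutator bookkeeping as the main obstacle but leave it open, and the concluding appeal to the even-level inner-product computation of \cite{ChenStasinski_2016_algebraisation} is an analogy rather than a reduction: nothing in the sketch identifies $H_c^*(\Sigma_c)_{\theta^{-1},\theta}$ with the quantity computed there. A further warning sign is your use of regularity of $\theta$ to isolate the ``diagonal locus'': the present lemma holds for \emph{every} $\theta$, and regularity is needed only for the open stratum $\Sigma_o$ treated in Lemma~\ref{lemm:lemm2 for statement (1)}, so an argument for $\Sigma_c$ that genuinely relies on regularity is invoking the wrong mechanism.

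The missing idea is a torus fixed-point localization rather than any unipotent quotient. After the change of variables $xF(u^-)\mapsto x$, $\Sigma_c$ is identified $T^F\times T^F$-equivariantly with $\widetilde{\Sigma}_c=\{(x,x',u^-,\tau,u)\mid xF(\tau)=u^-\tau u x'\}$, and on $\widetilde{\Sigma}_c$ the finite action extends to the connected torus $H=\{(t,t)\mid t\in T_1\}$ (this uses only the commutativity of $T$). Localizing compactly supported cohomology at the $H$-fixed points gives $\dim H_c^*(\Sigma_c)_{\theta^{-1},\theta}=\dim H_c^*\bigl((\widetilde{\Sigma}_c)^H\bigr)_{\theta^{-1},\theta}$, and the fixed locus is exactly $\{(1,1,1,\tau,1)\mid\tau\in T^F\}\cong T^F$, whose $(\theta^{-1},\theta)$-isotypic part is one-dimensional, with no genericity hypothesis needed. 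Without this step, or a genuinely worked-out substitute for it, your proposal does not establish the statement.
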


\begin{lemm}\label{lemm:lemm2 for statement (1)}
If $\theta$ is regular, then $\dim H_c^j(\Sigma_o)_{\theta^{-1},\theta}=0$ for all $j$.
\end{lemm}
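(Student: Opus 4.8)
\medskip\noindent\textbf{Plan of proof.}
The strategy is the one familiar from vanishing statements in (higher) Deligne--Lusztig theory: stratify $\Sigma_o$ into finitely many locally closed, $T^F\times T^F$-stable pieces, on each of which a connected one-parameter group acts freely, so that the $(\theta^{-1},\theta)$-isotypic cohomology dies in every degree, the regularity of $\theta$ being exactly what makes the character to be cancelled non-trivial. It is worth recording that this is the genuinely new ingredient for odd $r$: when $r$ is even one has $l=l'$, so $(U^-)^{l'}\setminus(U^-)^l=\emptyset$, the variety $\Sigma_o$ is empty and there is nothing to prove, while for odd $r$ there is exactly one extra congruence level between $l'$ and $l$ and it is $\Sigma_o$ that accounts for it. To stratify, I would use $(U^-)^{l'}=\prod_{\beta\in\Phi^-}U_\beta^{l'}$ to write the coordinate $u^-$ of a point of $\Sigma_o$ as $\prod_\beta u_\beta^-$; since $u^-\notin(U^-)^l$ and (for odd $r$) $l=l'+1$, its image in $(U^-)^{l'}/(U^-)^l\cong\bigoplus_\beta U_\beta^{l'}/U_\beta^l$ is non-zero, and one cuts $\Sigma_o$ into locally closed pieces by prescribing which of these components vanish, organised via a fixed total order on $\Phi^-$ so that each piece has a well-defined ``leading root'' $\alpha$. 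These pieces are stable under $T^F\times T^F$ (which merely rescales each $U_\beta^{l'}/U_\beta^l$ by a character, hence cannot change which components vanish), and by the long exact sequences attached to closed/open decompositions it suffices to prove that the $(\theta^{-1},\theta)$-part of every cohomology group of each such stratum $\Sigma_o^\alpha$ vanishes.

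On $\Sigma_o^\alpha$ the plan is to build a free action of a connected one-parameter group, exploiting the numerical coincidence $2l'=r-1$ peculiar to odd $r$. This coincidence makes the Chevalley commutator map pair $U_\alpha^{l'}$ and $U_{-\alpha}^{l'}$ into the level-$(r-1)$ coroot torus $\mathcal{T}^\alpha=(T^\alpha)^{r-1}$ --- exactly the subgroup appearing in the regularity condition of Definition~\ref{defi: generic conditions}~(2) --- and, fixing the (by hypothesis invertible) leading entry of $u^-$ at $\alpha$, this pairing identifies a $U_{-\alpha}^{l'}$-coordinate with a one-parameter's worth of elements of $\mathcal{T}^\alpha$. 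One then translates this $U_{-\alpha}$-coordinate while shifting accordingly the $\mathcal{T}^\alpha$-component of the $T$-variable $\tau$, and with the compensating adjustments in the remaining coordinates that are forced, level by level, by the defining relation $xF(u^-\tau)=u^-\tau ux'$ (solvable for the relevant entry of $x$ or $x'$ precisely because the leading term is invertible), this assembles into a free algebraic action of a connected one-parameter group on $\Sigma_o^\alpha$, commuting with the $T^F\times T^F$-action (after, if necessary, extending scalars to a field $\mathbb{F}_{q^a}$ over which $\mathbf{T}^\alpha$ is defined, as in Definition~\ref{defi: generic conditions}~(2)). Since a connected linear algebraic group acts trivially on $\ell$-adic cohomology with compact support (see \cite[10.13]{DM1991}), this group, and in particular its finite group of $F^a$-fixed points, acts trivially on every cohomology group of $\Sigma_o^\alpha$.

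To conclude, one has to see how this finite group of $F^a$-fixed points acts on the $(\theta^{-1},\theta)$-isotypic part: through the way it translates $\tau$, it acts there by the character $\theta\circ N^{F^a}_F$ of $(\mathcal{T}^\alpha)^{F^a}$, the norm $N^{F^a}_F$ entering precisely because, as in \cite[2.8]{Sta2009Unramified}, that is how characters of $T^F$ and of $T^{F^a}$ are matched (the compensating moves contribute only a twist by a character of a connected group, hence trivial). But by regularity $\theta\circ N^{F^a}_F$ is non-trivial on $(\mathcal{T}^\alpha)^{F^a}$, whereas that group acts trivially on the cohomology by the previous paragraph; hence the $(\theta^{-1},\theta)$-part of every $H_c^j(\Sigma_o^\alpha)$ vanishes, and summing over the strata gives the lemma. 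I expect the main obstacle to be the construction in the second paragraph --- turning the heuristic ``translate a $U_{-\alpha}$-coordinate and compensate'' into a genuine free action of a connected group compatible with $T^F\times T^F$, and carrying out the attendant bookkeeping between the $\mathbb{F}_q$- and $\mathbb{F}_{q^a}$-structures so that the norm-based regularity hypothesis is exactly what is needed. This, together with the handling of the single extra congruence level between $l'$ and $l$, is where the argument genuinely modifies Lusztig's original scheme \cite{Lusztig2004RepsFinRings} and its even-level incarnation in \cite{ChenStasinski_2016_algebraisation}.
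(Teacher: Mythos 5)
Your overall architecture coincides with the paper's: reduce to the action of $(T^{r-1})^F$ on the $\tau$-coordinate, stratify $\Sigma_o$ by the leading root of $u^-$ (the paper stratifies $F(u^-)$ by the roots $F(\beta)$, with a total order refining the height function), use that opposite root subgroups at level $l'$ pair into $\mathcal{T}^{\alpha}=(T^{\alpha})^{r-1}$ because $2l'=r-1$, extend the finite torus action on each stratum to an algebraic action of a connected group, invoke triviality of connected-group actions on compactly supported cohomology, and conclude by regularity through the norm map; your remark that $\Sigma_o=\emptyset$ when $r$ is even is also correct. However, the step you yourself flag as the main obstacle is precisely where the sketch, as written, does not go through, and the paper resolves it in the opposite direction. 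You propose to translate a $U_{-\alpha}^{l'}$-coordinate and to ``shift accordingly'' the $\mathcal{T}^{\alpha}$-component of $\tau$. Since the defining equation $xF(u^-\tau)=u^-\tau ux'$ involves both $\tau$ and $F(\tau)$, shifting $\tau$ by $s\in\mathcal{T}^{\alpha}$ creates two discrepancies, $F(s)\in\mathcal{T}^{F(\alpha)}$ on one side and $s\in\mathcal{T}^{\alpha}$ on the other, lying in different subtori, and they cannot both be produced by a single commutator against the leading entry of $u^-$; equivalently, recovering the $\tau$-shift from the unipotent translation amounts to inverting a Lang-type map $t\mapsto t^{-1}F(t)$ on $\mathcal{T}^{\alpha}$, which is not an algebraic, single-valued operation. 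The paper reverses the roles: it acts by the group $H_{\beta}=\{t\in T^{r-1}\mid L(t)\in\mathcal{T}^{-F(\beta)}\}$, which contains $(T^{r-1})^F$, shifts $\tau$ to $t^{-1}\tau$ so that the single net discrepancy is $L(t)^{-1}$, and then solves \emph{algebraically} for a unipotent compensator $\Psi(t,u^-)$ via the SGA3 relation for opposite root subgroups, using the invertibility of the leading coefficient of $u^-$ (available exactly on the stratum), absorbing $\Psi$ into the $x,x'$-coordinates.

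The second gap is in your concluding step: the $F^a$-fixed points $(\mathcal{T}^{\alpha})^{F^a}$ of your connected group are not contained in $T^F$, so they do not act by a scalar on the $(\theta^{-1},\theta)$-isotypic part, and the claim that they act there through $\theta\circ N^{F^a}_F$ is not justified by the matching of characters you cite. The paper's argument avoids this entirely: since $H_{\beta}$ contains $(T^{r-1})^F$ and its action extends the given one, and since $N^{F^a}_F\bigl((\mathcal{T}^{-F(\beta)})^{F^a}\bigr)\subseteq (T^{r-1})^F\cap (H_{\beta})^{\circ}$ (the norm image of the connected group $\mathcal{T}^{-F(\beta)}$ is connected and lies in $H_{\beta}$), these norm elements act trivially on each $H_c^j$ while acting on the $\theta^{-1}$-isotypic part by a character that is non-trivial by the very definition of regularity, forcing that part to vanish. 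So the norm enters through membership of norm images in the identity component of the acting group, not through a comparison of $\mathrm{Irr}(T^F)$ and $\mathrm{Irr}(T^{F^a})$. In short: correct strategy and correct ingredients, but the two pivotal points --- an algebraic construction of the compensated action, and the identification of a subgroup of $(T^{r-1})^F$ that simultaneously acts trivially on cohomology and detects regularity --- are exactly what is missing; they are supplied by the choice of $H_{\beta}$ and the explicit morphism $\Psi$ in Section~5. (Also, freeness of the action is neither needed nor used, and the pairing of opposite root subgroups is governed by the $\mathrm{SL}_2$-relation of SGA3 XX~2.2 rather than the Chevalley commutator formula, which only applies to non-opposite pairs of roots.)
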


\begin{proof}[Proof of Lemma~\ref{lemm:lemm1 for statement (1)}]
The change of variables $xF(u^{-})\mapsto x$ identifies $\Sigma_c$ with
$$\widetilde{\Sigma}_c:=\{ (x,x',u^{-},\tau,u)\in  FU^{l',l}\times FU^{l',l} \times (U^-)^{l}\times T\times U^{l'} \mid xF(\tau)=u^-\tau ux'    \}$$
($T^F\times T^F$-equivariantly). On $\widetilde{\Sigma}_c$, the $T^F\times T^F$-action naturally extends to the action of 
$$H:=\{ (t,t)\mid t\in T_1  \}$$
(note that $T_1$ is always a subgroup of $T$). As $H$ is a torus, by basic properties of $\ell$-adic cohomology we have
$$\dim H_c^*(\Sigma_c)_{\theta^{-1},\theta}=\dim H_c^*((\widetilde{\Sigma}_c)^{H})_{\theta^{-1},\theta}.$$
On the other hand, by construction it is clear that
$$(\widetilde{\Sigma}_c)^{H}= \{ (1,1,1,\tau,1) \mid \tau\in T^F \} \cong T^F.$$
So the space $H_c^*(\Sigma_c)_{\theta^{-1},\theta}$ can be identified with the $1$-dimensional space $\theta^{-1}\times \theta$, which completes the proof.
\end{proof}

\begin{proof}[Proof of Lemma~\ref{lemm:lemm2 for statement (1)}]
First note that it suffices to show
\begin{equation}\label{formula:Sigma_o}
H_c^j(\Sigma_o)_{\theta^{-1}|_{(T^{r-1})^F}}=0,
\end{equation}
where $(T^{r-1})^F$ acts on $\Sigma_o$ by $t\colon (x,x',u^-,\tau,u)\mapsto (x,x',u^-,t^{-1}\tau,u)$.

\vspace{2mm} To continue the argument we shall need to stratify $\Sigma_o$ according to roots. For $\beta\in\Phi^-$, let $F(\beta)$ be the root such that $F(U_{\beta})=U_{F(\beta)}$, and let us fix a total order on $F(\Phi^-)$ refining the height function $\mathrm{ht}(-)$. Then every element $u'$ in $F((U^-)^{l'}\backslash (U^-)^l)$ can be written uniquely as $\prod_{\beta\in\Phi^{-}}x_{F(\beta)}^{u'}$, where $x_{F(\beta)}^{u'}\in U_{F(\beta)}$ and the order is taken to be: If $F(\beta_1)<F(\beta_2)$, then $x_{F(\beta_1)}^{u'}$ is at the left to $x_{F(\beta_2)}^{u'}$. Then we have a stratification
$$F((U^-)^{l'}\backslash (U^-)^l)=\bigsqcup_{\beta\in\Phi^-}U_{F(\beta)+},$$
where $U_{F(\beta)+}$ is the locally closed subvariety (of $F((U^-)^{l'}\backslash (U^-)^l)$) consisting of the elements $u'$ such that
\begin{itemize}
\item [(i)] $x_{F(\beta)}^{u'}\notin F((U^-)^l)$, and
\item [(ii)] $x_{F(\beta_1)}^{u'}\in F((U^-)^l)$ for all $F(\beta_1)<F(\beta)$;
\end{itemize}
in particular, for $u'\in U_{F(\beta)+}$ we have 
\begin{equation}\label{formula:place change expression}
u'=x_{F(\beta)}^{u'}\cdot \prod_{\beta'\neq\beta}x_{F(\beta')}^{u'},
\end{equation}
where the product order of $\prod_{\beta'\neq\beta}x_{F(\beta')}^{u'}$ is as before. Note that the above stratification gives a $(T^{r-1})^F$-equivariant stratification of $\Sigma_o$:
$$\Sigma_o=\coprod_{\beta\in\Phi^-}\Sigma_{\beta},$$
where $\Sigma_{\beta}$ is the locally closed subvariety characterised by the property that the component $u^-$ satisfying $F(u^-)\in  U_{F(\beta)+}$. Therefore, to prove \eqref{formula:Sigma_o} it suffices to show that
\begin{equation}\label{formula:Sigma_beta}
H_c^j(\Sigma_{\beta})_{\theta^{-1}|_{(T^{r-1})^F}}=0
\end{equation}
for each $\beta\in\Phi^{-}$.

\vspace{2mm} To proceed we need a technical commutator result, which has been repeatedly used in \cite{Lusztig2004RepsFinRings}, \cite{Sta2009Unramified}, \cite{ChenStasinski_2016_algebraisation}, \cite{Chen_2017_InnerProduct}, in slightly different forms; however, in the current situation a much simpler version is enough for us, as we are working inside $TG^{l'}$ instead of $G$:

\vspace{2mm} Consider the group 
$$H_{\beta}:=\{ t\in T^{r-1}  \mid L(t)\in \mathcal{T}^{-F(\beta)}  \},$$
where $\mathcal{T}^{-F(\beta)}:=(T^{-F(\beta)})^{r-1} $ is the smallest reduction kernel of the root subgroup, as defined in Section~\ref{sec:prelim}. Note that in particular $H_{\beta}$ contains $(T^{r-1})^F$. 

\begin{lemm}
There is a morphism 
$$\Psi\colon H_{\beta}\times U_{F(\beta)+}\longrightarrow FU^{l'},$$
such that the $T$-part (in the Iwahori decomposition $G^{r-1}=T^{r-1}U^{r-1}(U^-)^{r-1}$) of the commutator $[u',\Psi(t,u')]\in G^{r-1}$ is $F(t^{-1})t=L(t)^{-1}$, and that $\Psi(t,-)$ is the constant map with value $1$ when $F(t)=t$.
\end{lemm}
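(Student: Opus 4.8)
The plan is to construct $\Psi$ explicitly root-by-root inside the abelian group $G^{r-1}=\mathfrak{g}$, exploiting that on $G^{r-1}$ commutators become linear in each variable. First I would reduce to the case where $u'=x_{F(\beta)}^{u'}$ is supported on the single root subgroup $U_{F(\beta)}$: by \eqref{formula:place change expression} any $u'\in U_{F(\beta)+}$ factors as $x_{F(\beta)}^{u'}\cdot\prod_{\beta'\neq\beta}x_{F(\beta')}^{u'}$, and since $[G^i,G^j]\subseteq G^{i+j}$ the factors in $F((U^-)^l)$ will only contribute commutators landing in $G^{l+(r-1)}=1$ (using $l\geq 1$ and $l+r-1\geq r$), so they are invisible for the commutator computation. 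Thus $\Psi$ may be defined using only the $F(\beta)$-component of $u'$, and the claimed properties need only be checked there.

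Next, for the one-parameter situation I would use the $\mathrm{SL}_2$ (or $\mathrm{PGL}_2$) triple attached to $\beta$: pick root vectors so that $[e_{F(\beta)},e_{-F(\beta)}]$ spans $\mathrm{Lie}(T^{-F(\beta)})$, and define $\Psi(t,u')$ to be the appropriate scalar multiple of $e_{-F(\beta)}\in U_{-F(\beta)}^{l'}$ determined by $L(t)^{-1}\in\mathcal{T}^{-F(\beta)}$ and by the coefficient of $u'$ in $U_{F(\beta)}$; one has to check $U_{-F(\beta)}\subseteq FU^{l'}$, which holds because $u'$, having a nonzero $F(\beta)$-component outside $F((U^-)^l)$, forces $F(\beta)\in F(\Phi^-)$, i.e.\ $-F(\beta)\in\Phi^{-}\cup\Phi^+$ — more carefully, $\beta\in\Phi^-$ means $F(\beta)\in\Phi^-$, hence $-F(\beta)\in\Phi^+$, and then $U_{-F(\beta)}^{l'}\subseteq U^{l'}\subseteq FU^{l'}$, since $FU=U$ by $F$-stability is not needed — only that $U^{l'}\subseteq FU^{l',l}$'s relevant part; I would state this cleanly. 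The commutator $[u',\Psi(t,u')]$ then has $T$-part equal to a scalar multiple of $h_{F(\beta)}$ which, by the chosen normalisation, is exactly $L(t)^{-1}=F(t^{-1})t$; the $U$- and $U^-$-parts are whatever they are and are not constrained. When $F(t)=t$ we have $L(t)=1$, so the required scalar is $0$ and $\Psi(t,-)\equiv 1$, giving the last clause.

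The delicate point, and the one I would spend the most care on, is the normalisation: making the coefficient in the definition of $\Psi(t,u')$ depend \emph{morphically} and \emph{multiplicatively} on both $t$ (through $L(t)\in\mathcal{T}^{-F(\beta)}$, a one-dimensional $\overline{\mathbb{F}}_q$-vector space) and on the $F(\beta)$-coordinate of $u'$ (which is assumed nonzero on $U_{F(\beta)+}$, so division by it is legitimate and algebraic on that locally closed locus). Concretely: identify $U_{F(\beta)}^{r-1}\cong\mathbb{G}_a$ and $\mathcal{T}^{-F(\beta)}\cong\mathbb{G}_a$, write $u'$'s coordinate as $c\neq 0$ and $L(t)^{-1}$ as $d$, and set the $U_{-F(\beta)}$-coordinate of $\Psi(t,u')$ to be $d/c$ times a fixed structure constant; the bilinearity of the Lie bracket then gives $[u',\Psi(t,u')]$ has $T$-part $c\cdot(d/c)\cdot[e_{F(\beta)},e_{-F(\beta)}]=d\cdot h_{F(\beta)}=L(t)^{-1}$ after matching the spanning normalisation. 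One must confirm this expression is a morphism on $H_\beta\times U_{F(\beta)+}$ (it is, since $c$ is nowhere zero there and $t\mapsto L(t)$ is a morphism), and that the output genuinely lies in $FU^{l'}$ rather than merely in $G^{l'}$ — which is why the construction places it in the $U$-direction, using $-F(\beta)\in\Phi^+$. Everything else is a direct unwinding of the Iwahori decomposition and the abelian-group structure of $G^{r-1}$.
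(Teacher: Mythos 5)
Your construction is essentially the paper's: you place $\Psi(t,u')$ in the opposite root subgroup $U_{-F(\beta)}$ at level $l'$, with coefficient proportional to $L(t)^{-1}$ divided by the leading $F(\beta)$-coordinate of $u'$, and you extract the $T$-part by a rank-one computation (the paper does this via the explicit relation \eqref{formula:opposite roots} from SGA3 rather than via leading-term Lie brackets; in the only non-vacuous case, $r$ odd, where $2l'=r-1$ and $3l'\geq r$, the two computations amount to the same thing). However, your reduction to the single component $x^{u'}_{F(\beta)}$ has a genuine gap. The definition of $U_{F(\beta)+}$ only forces the components $x^{u'}_{F(\beta_1)}$ with $F(\beta_1)<F(\beta)$ to lie in $F((U^-)^l)$; the components attached to roots \emph{above} $F(\beta)$ in the chosen order are merely in $F((U^-)^{l'})$, so their commutators with $\Psi(t,u')\in U_{-F(\beta)}^{l'}$ land in $G^{2l'}=G^{r-1}$ and do \emph{not} vanish. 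What saves the statement is not vanishing but the absence of a $T$-component: for $\gamma\neq F(\beta)$, the commutator of a $\gamma$-root element with a $(-F(\beta))$-root element is a product of root elements for the roots $i\gamma-jF(\beta)\neq 0$ (Chevalley commutator formula; equivalently, in your Lie-bracket picture, $[\mathfrak{g}_{\gamma},\mathfrak{g}_{-F(\beta)}]\subseteq\mathfrak{g}_{\gamma-F(\beta)}$, which has trivial $T$-part). This is exactly the step the paper carries out for ${}^{x^{u'}_{F(\beta)}}[u'',\Psi(t,u')]$, and it must be included; as written, your claim that all factors other than $x^{u'}_{F(\beta)}$ are ``invisible'' is false.

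A secondary point: your justification that $\Psi$ lands in $FU^{l'}$ does not work as stated, because $F$ need not preserve $\Phi^-$ (the torus is $F$-stable, the Borel in general is not), so you cannot conclude $F(\beta)\in\Phi^-$. The clean argument is the one you gesture at but do not pin down: since $\beta\in\Phi^-$ we have $-\beta\in\Phi^+$, hence $U_{-F(\beta)}^{l'}=F(U_{-\beta}^{l'})\subseteq F(U^{l'})=FU^{l'}$, which is precisely the stated codomain. With these two repairs (the Chevalley/root-space argument for the components above $F(\beta)$, and the correct identification of the codomain), your normalisation and the final two clauses (the $T$-part being $F(t^{-1})t$, and $\Psi(t,-)\equiv 1$ when $F(t)=t$) go through as in the paper.
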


\begin{proof}
Following \cite[XX]{SGA3}, we fix an isomorphism $p_{\alpha}\colon (\mathbb{G}_a)_{\mathcal{O}_r^{\mathrm{ur}}}\cong \mathbf{U}_{\alpha}$ for every $\alpha$, in a way that there exists $a\in \mathbb{G}_m(\mathcal{O}_r^{\mathrm{ur}})$ such that, for all $x,y\in \mathbb{G}_a(\mathcal{O}_r^{\mathrm{ur}})$ we have
\begin{equation}\label{formula:opposite roots}
p_{-\alpha}(y)p_{\alpha}(x)=p_{\alpha}\left(\frac{x}{1+axy}\right)\check{\alpha}\left(\frac{1}{1+axy}\right)p_{-\alpha}\left(\frac{y}{1+axy}\right)
\end{equation}
(see \cite[XX~2.2]{SGA3}).

\vspace{2mm} Now for $u'\in U_{F(\beta)+}$, we write $u'=x^{u'}_{F(\beta)}\cdot \prod_{\beta'\neq\beta}x_{F(\beta')}^{u'}$ as in \eqref{formula:place change expression}, and take $u'':=\prod_{\beta'\neq\beta}x_{F(\beta')}^{u'}$. Let us put 
$$\lambda(t):=\left(1-\left( \check{-F(\beta)} \right)^{-1}(F(t^{-1})t)\right)\cdot\frac{1}{a}\cdot\frac{1}{p^{-1}_{F(\beta)}(x^{u'}_{F(\beta)})^2}\in\overline{\mathbb{F}}_q$$ 
and define
$$\Psi(t,u'):=p_{-F(\beta)}\left(\lambda(t)\cdot p_{F(\beta)}^{-1}(x^{u'}_{F(\beta)})\right).$$
We shall verify that this construction satisfies the desired property. Note that
$$[u',\Psi(t,u')]={^{x^{u'}_{F(\beta)}}[u'',\Psi(t,u')]}\cdot [x^{u'}_{F(\beta)},\Psi(t,u')].$$
By inductively applying the Chevalley commutator formula \cite[3.3.4.1]{Demazure_Summary_of_Thesis} one sees that ${^{x^{u'}_{F(\beta)}}[u'',\Psi(t,u')]}\in U^{r-1}(U^-)^{r-1}$ (see e.g.\ the argument of \cite[Lemma~4.6]{ChenStasinski_2016_algebraisation}), so it does not contribute to the $T$-part in the Iwahori decomposition. For the second part $[x^{u'}_{F(\beta)},\Psi(t,u')]$, by \eqref{formula:opposite roots} we have (note that $p^{-1}_{F(\beta)}(x^{u'}_{F(\beta)})^3=0$)
\begin{equation*}
\begin{split}
[x^{u'}_{F(\beta)},\Psi(t,u')] ={}
& p_{F(\beta)}(p^{-1}_{F(\beta)}(x^{u'}_{F(\beta)}))\\
& \times p_{-F(\beta)}\left(\lambda(t)\cdot p_{F(\beta)}^{-1}(x^{u'}_{F(\beta)})\right) \times p_{F(\beta)}(-p^{-1}_{F(\beta)}(x^{u'}_{F(\beta)}))\\
& \times p_{-F(\beta)}\left(-\lambda(t)\cdot p_{F(\beta)}^{-1}(x^{u'}_{F(\beta)})\right)\\
={} & p_{F(\beta)}(p^{-1}_{F(\beta)}(x^{u'}_{F(\beta)}))\times p_{F(\beta)}(-p^{-1}_{F(\beta)}(x^{u'}_{F(\beta)}))\\
& \times \check{F(\beta)}(1+a \cdot \lambda(t) p^{-1}_{F(\beta)}(x^{u'}_{F(\beta)})^2 ) \\
& \times p_{-F(\beta)}\left(\lambda(t)\cdot p_{F(\beta)}^{-1}(x^{u'}_{F(\beta)})\right) \times p_{-F(\beta)}\left(-\lambda(t)\cdot p_{F(\beta)}^{-1}(x^{u'}_{F(\beta)})\right)\\
={} & \check{(-F(\beta))}(1-a\lambda(t) p^{-1}_{F(\beta)}(x^{u'}_{F(\beta)})^2 )\\
={} & \check{(-F(\beta))}\left( \left( \check{-F(\beta)} \right)^{-1}(F(t^{-1})t)  \right)=F(t^{-1})t,
\end{split}
\end{equation*}
as desired. On the other hand, by construction $\Psi(t,-)$ is constant when $F(t)=t$. This completes the proof.
\end{proof}

Now we return to the proof of Lemma~\ref{lemm:lemm2 for statement (1)}. First, by the variable change $\tau^{-1} u^-\tau\mapsto u^-$ we can rewrite $\Sigma_{\beta}$ as
$$\widetilde{\Sigma}_{\beta}=\{ (x,x',u^{-},\tau,u)\in  FU^{l',l}\times FU^{l',l} \times U_{F(\beta)+}\times T\times U^{l'} \mid xF(\tau u^- )= \tau  u^- ux'    \},$$
on which the $(T^{r-1})^F$-action does not change, and to show \eqref{formula:Sigma_beta} is equivalent to showing
\begin{equation}\label{formula:Sigma_beta2}
H_c^j(\widetilde{\Sigma}_{\beta})_{\theta^{-1}|_{(T^{r-1})^F}}=0.
\end{equation}
By construction, one verifies immediately that the morphism $\Psi$ gives a morphism
$$h\colon H_{\beta}\times \widetilde{\Sigma}_{\beta}\longrightarrow \widetilde{\Sigma}_{\beta};\quad (t,x,x',u^-,\tau,u)\longmapsto (f_{t}(x),g_t(x'),u^-,t^{-1}\tau,u),$$
where
$$g_t\colon x'\longmapsto x'\cdot \Psi(t,u^-)$$
and
$$f_t\colon x\longmapsto x\cdot {^{F(\tau)}  \left(  t^{-1}F(t) \cdot  [F(u^-),x'^{-1}g_t(x')]\cdot x'^{-1}g_t(x') \right)  } = x\cdot {^{F(\tau u^-)}  \left(  t^{-1}F(t) \cdot   \Psi(t,u^-)   \right)  }.$$
Note that, by construction, $h(t,-)$ extends the $(T^{r-1})^F$-action on $\widetilde{\Sigma}_{\beta}$, and it is an automorphism of $\widetilde{\Sigma}_{\beta}$ for every $t\in H_{\beta}$. Moreover, by the homotopy property of cohomology (see e.g.\ \cite[Page~136]{DL1976} or \cite[4.4]{ChenStasinski_2016_algebraisation}) the induced map of $h(t,-)$ on $H_c^j(\widetilde{\Sigma}_{\beta})$ is the identity map for all $t\in (H_{\beta})^{\circ}$.

\vspace{2mm} In such a situation it follows from a routine argument that the regularity of $\theta$ and the construction of $H_{\beta}$ imply the property \eqref{formula:Sigma_beta2}: Let $a\in\mathbb{Z}_{>0}$ be such that $F^a(\mathcal{T}^{-F(\beta)})=\mathcal{T}^{-F(\beta)}$. By continuity, the images of the norm map $N^{F^a}_F(t)=t\cdot F(t)\cdots F^{a-1}(t)$ on $\mathcal{T}^{-F(\beta)}$ form a connected subgroup of $H_{\beta}$, hence are contained in $(H_{\beta})^{\circ}$. Therefore $N^{F^a}_F(\mathcal{T}^{-F(\beta)})^{F^a})\subseteq (T^{r-1})^F\cap (H_{\beta})^{\circ}$, hence it must act on $H^j_c(\widetilde{\Sigma})$ trivially. Thus the regularity of $\theta$ implies that
\begin{equation*}
H^j_c(\widetilde{\Sigma}_{\beta})_{\theta^{-1}\big|_{ N^{F^a}_F\left(\left(\mathcal{T}^{-F(\beta)}\right)^{F^a}\right)}}=0.
\end{equation*}
In particular, $H^j_c(\widetilde{\Sigma}_{\beta})_{\theta^{-1}|_{(T^{r-1})^F}}=0$. This completes the proof.
\end{proof}

\bibliographystyle{alpha}
\bibliography{zchenrefs}

\end{document}